\documentclass[a4paper, 11pt]{amsart}
\usepackage{amsthm}
\usepackage{amsfonts,amssymb,amsmath,amscd}
\usepackage[margin=2.9 cm]{geometry}

\usepackage{ytableau}
\usepackage{nicematrix}
\usepackage{tikz-cd}
\usepackage{xcolor}
\usepackage{enumerate}
\usepackage{multirow}
\usepackage{float}
\usepackage{mathtools}
\usepackage{mathrsfs}
\usepackage{bm}
\usepackage[pagebackref=true]{hyperref}

\usepackage{hyperref}

\definecolor{fgreen}{RGB}{44,144, 14}

\renewenvironment{proof}{{\bfseries Proof.}}{\qed}
 
\numberwithin{equation}{section}
\newtheorem{theorem}{Theorem}[section]
\newtheorem*{theorem*}{Theorem} 
\newtheorem{proposition}[theorem]{Proposition} 
\newtheorem{corollary}[theorem]{Corollary} 
\newtheorem{lemma}[theorem]{Lemma} 

\theoremstyle{definition}
\newtheorem{definition}[theorem]{Definition} 
\newtheorem{remark}[theorem]{Remark}

\everymath{\displaystyle}
\newcommand{\thmref}[1]{Theorem~\ref{#1}}

\makeatletter
\@namedef{subjclassname@2020}{\textup{2020} Mathematics Subject Classification}
\makeatother

\begin{document}

\title[Reversible Higgs Bundles]{Reversible Higgs Bundles}
\author[K. Gongopadhyay]{Krishnendu Gongopadhyay}
\author[T. Nayak]{Tathagata Nayak}
\address{Indian Institute of Science Education and Research (IISER) Mohali,
	Knowledge City,  Sector 81, S.A.S. Nagar 140306, Punjab, India}
\email{krishnendu@iisermohali.ac.in}

\address{Indian Institute of Science Education and Research (IISER) Mohali,
	Knowledge City,  Sector 81, S.A.S. Nagar 140306, Punjab, India}
 \email{tathagatanayak68@gmail.com}

\subjclass[2020]{Primary 14H60; Secondary 14D20, 14F08, 14H40, 32L05, 53C26}

\keywords{Higgs bundles; Reversibility,   non-abelian Hodge correspondence; spectral curves; branes; Langlands duality; Fourier–Mukai transform.}

\textit{}

\begin{abstract}
We study reversible Higgs bundles on a compact Riemann surface $\Sigma$ of genus
$g\geq 2$, that is, $G$-Higgs bundles $(E,\Phi)$ with $(E,\Phi)\cong(E,-\Phi)$. For a complex
reductive group $G$ realized as a self-adjoint subgroup of $GL(n,\mathbb{C})$, we show that if
$(E,\Phi)$ corresponds to a representation $\rho$ of $\pi_1(\Sigma)$, then $(E,-\Phi)$
corresponds to $(\overline{\rho}^{\,t})^{-1}$. As a consequence, the moduli space of reversible
Higgs bundles is the fixed locus of an involution of the character variety and carries the
structure of a real affine algebraic set. For a connected semisimple group and every
topological type, we prove that the reversible locus in the smooth moduli space is a
$(B,A,A)$-brane. We then describe reversibility through spectral data: in even rank, for a
smooth spectral curve, reversible Higgs bundles correspond to line bundles invariant under
the involution $\eta\mapsto-\eta$; in odd rank, under natural genericity assumptions for which
the spectral curve is nodal, we give a gluing criterion together with an explicit polystability
condition. Finally, for $SL(2n,\mathbb{C})$ we construct a relative Fourier--Mukai transform
over the regular locus of the Hitchin base and prove that it maps the structure sheaf of the
reversible locus to an object whose support is exactly the regular degree-zero
$PSp(2n,\mathbb{C})$-locus in the Langlands dual $PGL(2n,\mathbb{C})$-moduli space. This
support is of type $(B,B,B)$.
\end{abstract}

%\tableofcontents
	
	\maketitle

\section{Introduction}

Let $\Sigma$ be a compact connected Riemann surface of genus $g \geq 2$, with canonical
bundle $K$, and let $\Gamma = \pi_1(\Sigma)$. For a complex reductive group $G$, let
$\mathcal{M}(\Sigma,G)$ be the moduli space of polystable $G$-Higgs bundles $(E,\Phi)$ with
vanishing rational characteristic classes. The moduli space can be studied from several points
of view. By the non-abelian Hodge correspondence \cite{hi, Si88, Co88, GGM09}, it is in
bijection with the character variety $\mathcal{X}_G(\Gamma)$. Its smooth locus carries a
hyper-K\"ahler metric with complex structures $J_1, J_2, J_3$ \cite{hi}. Through the Hitchin
fibration, its generic points are described by line bundles on spectral curves \cite{bnr}.
Finally, for a Langlands dual pair $G$ and ${}^LG$, the two Hitchin systems live over the same
base, and their generic fibres are dual abelian varieties \cite{dp}. In the work of
Kapustin and Witten \cite{kw}, this duality is expected to act on branes: a brane of type
$(B,A,A)$ for $G$ should correspond, under a fibrewise Fourier--Mukai transform, to a brane of
type $(B,B,B)$ for ${}^LG$.

In this paper we study the involution that changes the sign of the Higgs field,
\[
  (E,\Phi) \longmapsto (E,-\Phi).
\]
We call a Higgs bundle $(E,\Phi)$ \emph{reversible} if $(E,\Phi) \cong (E,-\Phi)$. This notion is motivated by the concept of reversibility in groups, see \cite{OS} for an exposition of this topic, and the idea of adjoint reality in Lie algebras, cf. \cite{GM}, \cite{GM2},  \cite{GML}. We describe reversible Higgs bundles in terms of surface group representations and in terms of spectral data, and we show that, on the smooth locus, the reversible locus is a $(B,A,A)$-brane (\thmref{thm:BAA}), whose mirror is predicted by physics to be a $(B,B,B)$-brane \cite{kw, bgh}. Our main result determines the support of this mirror for $SL(2n,\mathbb C)$ by an explicit Fourier–Mukai transform.

The involution $(E,\Phi)\mapsto(E,-\Phi)$ is closely related
to Simpson's cyclotomic harmonic bundles \cite[\S\S 6.1--6.2]{Si09},
the case $m=2$ corresponding to the sign-change symmetry.
Simpson also describes the associated cyclic eigenspace
decomposition and, in the maximal parabolic case, the structure
in terms of parabolic line bundles and cyclic Higgs maps.
The fixed loci of this involution, their relation to real forms,
and their $(B,A,A)$-brane geometry have been studied in
\cite{ga,gr}. In a different setting, Hausel and Hitchin
\cite[Theorem~1.5]{hh22} compute the relative Fourier--Mukai
transform of the structure sheaf of a Lagrangian associated
with very stable Higgs bundles, obtaining a hyperholomorphic
bundle. Our contribution is to describe the reversible locus
spectrally, including an explicit gluing criterion in odd rank,
and to identify globally over $\mathcal{A}_{\mathrm{reg}}$
the support of the relative Fourier--Mukai transform of its
structure sheaf with
$e\bigl(\mathcal{M}^0_{\mathrm{reg}}(PSp(2n,\mathbb{C}))\bigr)$.

\subsection*{ Main Result}
We work with the Langlands dual pair $SL(2n,\mathbb{C})$ and $PGL(2n,\mathbb{C})$. Let
$\mathcal{A}$ be their common Hitchin base, let $\mathcal{A}_{\mathrm{reg}} \subset \mathcal{A}$
be the locus of smooth spectral curves, and put
\[
  X = \mathcal{M}_{\mathrm{reg}}(SL(2n,\mathbb{C})), \qquad
  \hat X = \mathcal{M}^0_{\mathrm{reg}}(PGL(2n,\mathbb{C})),
\]
where the superscript $0$ denotes the topologically trivial component. Fixing a theta
characteristic identifies $X$ with the relative Prym variety of the universal spectral curve
over $\mathcal{A}_{\mathrm{reg}}$, and $\hat X$ is canonically identified with the dual abelian
scheme. The normalized relative Poincar\'e line bundle on
$X \times_{\mathcal{A}_{\mathrm{reg}}} \hat X$ defines a relative Fourier--Mukai transform
\[
  \Psi \colon D^b\big(\mathrm{Coh}(X)\big) \longrightarrow D^b\big(\mathrm{Coh}(\hat X)\big).
\]
Let $\mathcal{M}^{\mathrm{rev}}_{\mathrm{reg}} \subset X$ be the reversible locus, and let
$\mathcal{E}_{\mathrm{global}}$ be its structure sheaf, pushed forward to $X$.

\begin{theorem*}[Theorem~\ref{thm:main}]
The support of $\Psi(\mathcal{E}_{\mathrm{global}})$ is
\[
  \operatorname{Supp}\Psi(\mathcal{E}_{\mathrm{global}})
  = e\big(\mathcal{M}^0_{\mathrm{reg}}(PSp(2n,\mathbb{C}))\big),
\]
where $e \colon \mathcal{M}^0_{\mathrm{reg}}(PSp(2n,\mathbb{C})) \hookrightarrow \hat X$ is the
closed embedding given by extension of structure group along
$PSp(2n,\mathbb{C}) \subset PGL(2n,\mathbb{C})$. This support is a complex submanifold of
$\hat X$ with respect to $J_1$, $J_2$ and $J_3$, which is of type $(B,B,B)$.
\end{theorem*}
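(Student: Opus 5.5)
The argument works fibrewise over $a\in\mathcal{A}_{\mathrm{reg}}$ and then globalizes by base change for the relative Fourier--Mukai transform.

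\textbf{Step 1: the reversible locus in a fibre.} Fix $a$ with smooth spectral curve $\pi\colon S_a\to\Sigma$ of degree $2n$. The characteristic polynomial of an $SL(2n,\mathbb{C})$-Higgs field of a reversible bundle satisfies $\det(x-\Phi)=\det(x+\Phi)$, so $a$ lies in the $\sigma$-invariant part of the base. Here $\sigma\colon\eta\mapsto-\eta$ acts on $S_a$, and the base is cut out by the vanishing of the odd coefficients $a_3,a_5,\dots$. By the even-rank spectral description proved earlier, the fibre of $\mathcal{M}^{\mathrm{rev}}_{\mathrm{reg}}$ over $a$ is the fixed locus of $\sigma^*$ on the Prym $P_a=\operatorname{Prym}(S_a/\Sigma)$, with the theta characteristic chosen so that the identification is $\sigma$-equivariant. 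Write $\bar S_a=S_a/\sigma$, which is a double cover of the $Sp(2n)$ spectral data over $\Sigma$. The fixed locus $P_a^{\sigma}$ is then a finite union of translates of the subtorus $B_a=\operatorname{im}(1+\sigma^*)$, a finite cover of $\operatorname{Prym}(\bar S_a\to\Sigma)$-type data. This is exactly the $Sp(2n)$ Hitchin fibre in the sense of Hitchin's $Sp/SO$ duality computation.

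\textbf{Step 2: Fourier--Mukai of a subtorus in the fibre.} For an abelian subvariety $B\subset A$ with quotient $A\to A/B$, the Fourier--Mukai transform of $\mathcal{O}_{x+B}$ is a line bundle supported on $\widehat{A/B}=\ker(\hat A\to\hat B)$, twisted by the character determined by $x$. Summing over the components of $P_a^{\sigma}$, the support of $\Psi_a(\mathcal{O}_{P_a^\sigma})$ is the subgroup $(B_a)^\perp\subset\hat P_a$, or a union of its translates. I then compute $(B_a)^\perp=\ker(\hat P_a\to\hat B_a)$ by duality. Since $\hat B_a$ is dual to the image of $1+\sigma^*$, the annihilator is the image of $1-\hat\sigma$ restricted appropriately; equivalently, the anti-invariant part, viewed in the dual Prym $\hat P_a=P_a/\pi^*J(\Sigma)[2n]$. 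The key identification, which I expect to be the main obstacle, is that this annihilator coincides with the $PSp(2n)$ Hitchin fibre embedded by $e$, namely line bundles whose descended bundle carries the symplectic form (up to scalar) induced by $\sigma$. For this I would use the Hitchin/Donagi--Pantev description of $Sp$ versus $SO$ fibres as Pryms of $S_a\to\bar S_a$ and their duals. The step requires carefully tracking finite groups, components, and the degree-zero condition. To show that the support is exactly this subtorus and not just contained in it, I would also check that the torsion translates of the reversible components give characters that make the summands nonvanishing on every component of the $PSp$ fibre.

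\textbf{Step 3: globalize and type.} Relative Fourier--Mukai commutes with base change for flat families of abelian schemes. The reversible locus is flat over its image $\mathcal{A}^{\sigma}_{\mathrm{reg}}$, which is also the $PSp(2n)$ Hitchin base intersected with $\mathcal{A}_{\mathrm{reg}}$. The fibrewise supports therefore assemble into a closed subscheme whose points are $e(\mathcal{M}^0_{\mathrm{reg}}(PSp(2n,\mathbb{C})))$, with $e$ a closed embedding because extension of structure group is injective on stable regular objects. Finally, this image is the fixed locus of the holomorphic involution $E\mapsto E^*$ (composed with the sign appropriate to the symplectic structure), which fixes $\Phi\mapsto -\Phi^t$. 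That involution is holomorphic for $J_1$, commutes with the hyperkähler structure via non-abelian Hodge (the representation side is $\rho\mapsto(\rho^t)^{-1}$, which is algebraic), and so is holomorphic for $J_2$ and $J_3$. Its fixed locus is therefore a $(B,B,B)$ submanifold.
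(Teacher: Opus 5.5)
Your route to the support is the paper's: Mukai's theorem on each even regular fibre, the annihilator $B_0^\perp$ of the identity component of the reversible fibre, its identification with the degree-zero $PSp(2n,\mathbb{C})$-fibre via Hitchin's condition $L\otimes\sigma^*L\cong\pi_a^*K^{2n-1}$, and base change over the even locus. Several points need fixing, one of them substantive.

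\textbf{Step~1.} Its last sentence is false, although you do not use it later, and it contradicts Step~2. The identity component of $P_a^{\sigma}$ is $\operatorname{im}(1+\sigma^*)=p^*\operatorname{Prym}(C/\Sigma)$, of dimension $g(C)-g=(2n^2-n-1)(g-1)$. The $Sp(2n,\mathbb{C})$-fibre is instead the anti-invariant part $\operatorname{Prym}(S_a/C)$, of dimension $(2n^2+n)(g-1)$. It is the dual side that is symplectic. Also, $S_a$ itself is the $Sp(2n)$ spectral curve and double covers $C=S_a/\sigma$, not the other way round.

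\textbf{Translates.} Your worry about translates and characters is misplaced. Translation by $x$ on $A$ becomes tensoring by $\mathcal{L}_a|_{\{x\}\times A^\vee}$, so every summand $\mathcal{F}(\mathcal{O}_{x+B_0})$ has support exactly $B_0^\perp$. This set is connected and is the whole image of the $PSp$-fibre.

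\textbf{Globalization (the substantive gap).} You skip the step the paper singles out. Base change for $\Psi$ over $\mathcal{A}_{\mathrm{reg}}$ gives $L\hat j_a^*\Psi(\mathcal{E}_{\mathrm{global}})\cong\Psi_a(Lj_a^*\mathcal{E}_{\mathrm{global}})$. Since $\mathcal{E}_{\mathrm{global}}$ lives over the proper subspace $\mathcal{A}^{\mathrm{even}}$, $Lj_a^*\mathcal{E}_{\mathrm{global}}$ carries Tor terms from the derived self-intersection of $Y=h^{-1}(T)$ in $X$, so it is not the structure sheaf of the reversible fibre. Flatness over $T$ becomes usable only after the factorization $\Psi(\iota_*\mathcal{G})\cong\hat\iota_*\Psi_T(\mathcal{G})$, obtained from flat base change along $p_1$ and the projection formula (Lemma~\ref{lem:evenbase}). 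You also need two further facts:
\begin{itemize}
\item The analytic fibre $\mathcal{M}^{\mathrm{rev}}_{\mathrm{reg}}\cap h^{-1}(a)$ must be reduced, so that the restricted sheaf really is $\mathcal{O}_B$. It is: since $h\circ\tilde\sigma=h$ on $Y$, the $(-1)$-eigenspace of $d\tilde\sigma$ lies in the tangent space of the fibre, so the intersection is transverse.
\item Lemma~\ref{lem:support} is needed to read off the support fibrewise.
\end{itemize}

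\textbf{Closed embedding.} Injectivity alone does not make $e$ a closed embedding. The paper also uses that $e$ is an immersion and is proper over $\mathcal{A}^{\mathrm{even}}_{\mathrm{reg}}$.

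\textbf{The $(B,B,B)$ step.} Here your argument is genuinely different. The paper checks that $e$ is holomorphic for each pair $(J_i',J_i)$:
\begin{itemize}
\item for $J_1$, because extension of structure group preserves holomorphic families;
\item for $J_2$, because under non-abelian Hodge $e$ becomes the algebraic map $\mathcal{X}_{PSp(2n,\mathbb{C})}(\Gamma)\to\mathcal{X}_{PGL(2n,\mathbb{C})}(\Gamma)$;
\item for $J_3=J_1J_2$, formally.
\end{itemize}
Since $e$ is a closed embedding, its image is then complex for all three structures.

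You instead realize the image as the fixed locus in $\hat X$ of $\vartheta(V,\Phi)=(V^*,-\Phi^t)$. This involution is $J_1$-holomorphic and corresponds to $\rho\mapsto(\rho^t)^{-1}$, hence is triholomorphic. The approach is valid and more intrinsic: closedness and smoothness come for free from the involution, and it bypasses the holomorphicity check for $e$.

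However, you assert $\operatorname{im}(e)=\operatorname{Fix}(\vartheta)$ without proof, and it is not formal: away from the regular locus, $\operatorname{Fix}(\vartheta)$ also contains orthogonal-type components. On $\hat X$ it does hold, by the following argument.
\begin{itemize}
\item By relative duality ($K_{S_a}\cong\pi_a^*K^{2n}$), $\vartheta$ acts on spectral data by $L\mapsto\sigma^*L^{-1}\otimes\pi_a^*K^{2n-1}$.
\item Hence $[L]$ is fixed if and only if $L\otimes\sigma^*L\otimes\pi_a^*K^{1-2n}\in\pi_a^*\operatorname{Pic}^0(\Sigma)$.
\item Twisting by a square root on $\Sigma$ normalizes this to $L\otimes\sigma^*L\cong\pi_a^*K^{2n-1}$, which is exactly the image of the $Sp(2n,\mathbb{C})$-fibre.
\end{itemize}
You need to include this computation for your route to close.
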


\subsection*{Idea of the proof}

The proof has a fibrewise part and a global part. Fix $a$ in the even regular locus, let
$S_a$ be the spectral curve, $C = S_a/\sigma$ its quotient by the involution, and
$A = \operatorname{Prym}(S_a/\Sigma)$. After translating by the Hitchin section, the reversible
part of the fibre becomes the fixed subgroup $B$ of $\sigma^*$ in $A$. Its identity component is
the abelian subvariety $B_0 = p^*\operatorname{Prym}(C/\Sigma)$, where $p \colon S_a \to C$ is the
quotient map, and $B$ is a finite union of translates of $B_0$. By results of Mukai \cite{Mu81},
the Fourier--Mukai transform of the structure sheaf of $B$ is supported on the annihilator
$B_0^\perp \subset A^\vee$, and the norm maps identify $B_0^\perp$ with the
$PSp(2n,\mathbb{C})$-fibre (Proposition~\ref{prop:fibrewise}).

To pass from fibres to the whole family, the transform must be restricted to fibres. The sheaf
$\mathcal{E}_{\mathrm{global}}$ is supported over the even locus, so it is not flat over
$\mathcal{A}_{\mathrm{reg}}$, and its derived restriction to a fibre involves a derived
self-intersection. We avoid this by restricting the construction to the even base
$T = \mathcal{A}^{\mathrm{even}}_{\mathrm{reg}}$: flat base change gives
$\Psi(\iota_*\mathcal{G}) \cong \hat\iota_*\Psi_T(\mathcal{G})$ (Lemma~\ref{lem:evenbase}), and
over $T$ the structure sheaf of the reversible locus is flat. Derived base change then reduces
the support computation to the fibrewise one (Proposition~\ref{prop:support}). Finally, the
extension map $e$ is a closed embedding whose fibres are identified with $B_0^\perp$
(Proposition~\ref{prop:psp}), and it is holomorphic for all three complex structures.

\subsection*{Relation to Hitchin's proposal}

A mirror of this kind was proposed by Hitchin \cite{Hi13}, in the closely related setting of
$U(m,m)$-Higgs bundles inside the $GL(2m,\mathbb{C})$-moduli space. There the spectral curve
$S$ has the form $\eta^{2m} + a_2\eta^{2m-2} + \cdots + a_{2m} = 0$, with involution
$\sigma(\eta) = -\eta$, and the generic fibres of the brane are unions of translates of
$\operatorname{Pic}^0(S/\sigma)$. Hitchin observed that dualizing the inclusion
$\operatorname{Pic}^0(S/\sigma) \subset \operatorname{Pic}^0(S)$ gives the Prym variety of
$S \to S/\sigma$, which is the generic fibre of the $Sp(2m,\mathbb{C})$-Hitchin system. On this
basis he proposed the $Sp(2m,\mathbb{C})$-moduli space as the support of the mirror brane,
together with a candidate hyperholomorphic bundle on it. This identification was obtained
fibre by fibre from the SYZ picture, and it was formulated as a conjecture: no Fourier--Mukai
transform of the brane was computed.

Independently, our Theorem~\ref{thm:main} proves the support part of this picture, globally over the regular
locus, for $SL(2n,\mathbb{C})$ and $PGL(2n,\mathbb{C})$. Compared with \cite{Hi13}, the new
features are the following.
\begin{enumerate}
  \item The dual support is obtained as the support of an actual object,
    $\Psi(\mathcal{E}_{\mathrm{global}}) \in D^b(\mathrm{Coh}(\hat X))$, computed by a
    Fourier--Mukai transform, rather than from a fibrewise duality of abelian varieties.
  \item The transform is global: it is defined over the whole regular locus
    $\mathcal{A}_{\mathrm{reg}}$, not only over the even locus
    $\mathcal{A}^{\mathrm{even}}_{\mathrm{reg}}$ where reversible Higgs bundles live. In
    particular, the support is shown to be empty over the rest of the base.
  \item The result is stated and proved for the Langlands dual pair $SL(2n,\mathbb{C})$ and
    $PGL(2n,\mathbb{C})$. On each even regular fibre the support is
    $\operatorname{Prym}(S_a/C)/\pi_a^*\operatorname{Jac}(\Sigma)[2]$, which is the fibre of the
    degree-zero $PSp(2n,\mathbb{C})$-Hitchin system.
  \item The support does not depend on the choice of theta characteristic.
\end{enumerate}

\subsection*{Further results}

Before the main theorem, we establish the following results on reversible Higgs bundles.

\emph{Representations (\S\ref{real}).} Realize $G$ as a self-adjoint subgroup of
$GL(n,\mathbb{C})$, so that $\theta(A) = \big(\overline{A}^{\,t}\big)^{-1}$ is a Cartan involution
of $G$. If $(E,\Phi)$ corresponds to the representation $\rho$, then $(E,-\Phi)$ corresponds to
$\theta \circ \rho$ (Theorem~\ref{thm:reversal}).  As a consequence, the reversible moduli space is identified with the
fixed locus of an anti-regular involution of $\mathcal{X}_G(\Gamma)$, and therefore carries the
structure of a real affine algebraic set (Theorem~\ref{thm:real-affine}).

\emph{The $(B,A,A)$-brane (\S\ref{revbrane}).} For a connected semisimple group $G$ and each topological
type $c \in \pi_1(G)$, we give a complete proof, in the gauge-theoretic setting of \cite{bg},
that the reversible locus in the smooth moduli space  is complex with respect to $J_1$ and Lagrangian with respect to
$\omega_2$ and $\omega_3$ (Theorem~\ref{thm:BAA}).

\emph{Spectral data (\S\ref{sec:hitchin}).} For $GL(n,\mathbb{C})$ with $n$ even and smooth spectral curve,
reversible Higgs bundles correspond to line bundles $L$ on the spectral curve with
$L \cong \sigma^*L$ (Theorem~\ref{thm:even-fibre}), and the identity component of the invariant
subgroup of $\operatorname{Pic}^0(S_a)$ is $p^*\operatorname{Pic}^0(C)$
(Corollary~\ref{cor:identity-component}). For $n = 2m+1$ odd, the spectral curve is the nodal
curve $Z \cup S'$, where $Z$ is the zero section. When $S'$ is smooth and the nodes are
transverse, reversible Higgs bundles with locally free spectral sheaf correspond to triples
$(L, L', \{\phi_{x_i}\})$, consisting of a line bundle on $Z$, a line bundle on $S'$ and gluing
isomorphisms at the nodes, subject to two conditions: the polystability condition
$\lvert 2m\deg L - \deg L' \rvert < 2m(2m+1)(g-1)$, and the existence of an isomorphism
$\alpha \colon L' \to \sigma^*L'$ with $\alpha(x_i) = 1$ at every node
(Theorem~\ref{thm:odd-fibre}). The second condition is stronger than $L' \cong \sigma^*L'$.

\subsection*{Organization}

\S\ref{real} proves the representation-theoretic description of reversibility and the real
structure of the reversible moduli space. \S\ref{revbrane} proves the $(B,A,A)$ property on the smooth
locus. \S\ref{sec:hitchin} describes reversibility in terms of spectral data in even and odd rank.
\S\ref{dual} constructs the relative Fourier--Mukai transform, computes the support of the
transformed structure sheaf, and identifies it with the regular degree-zero
$PSp(2n,\mathbb{C})$-locus.

\section{Reversibility and surface group representations}\label{real}

Throughout, $\Sigma$ denotes a compact connected Riemann surface of genus
$g \geq 2$, with canonical bundle $K$, and $\Gamma = \pi_1(\Sigma, x)$ for a
fixed base point $x \in \Sigma$. For a matrix $A$ we write $\overline{A}^{\,t}$
for its conjugate transpose.

Let $G \subset GL(n,\mathbb{C})$ be a complex reductive algebraic group. By a
theorem of Mostow \cite{Mo55}, since $G$ is reductive, there exists
$P \in GL(n,\mathbb{C})$ such that $PGP^{-1}$ is invariant under
$A \mapsto \overline{A}^{\,t}$. Replacing $G$ by the isomorphic group
$PGP^{-1}$, we assume without loss of generality that $G$ is
\emph{self-adjoint}, i.e.
\[
  A \in G \implies \overline{A}^{\,t} \in G .
\]
Consequently, the Lie algebra $\mathfrak{g} \subset \mathfrak{gl}(n,\mathbb{C})$
is closed under $X \mapsto \overline{X}^{\,t}$, the map
\[
  \theta \colon G \longrightarrow G, \qquad \theta(A) = \big(\overline{A}^{\,t}\big)^{-1},
\]
is a Cartan involution of $G$, and its fixed point set
$H := G \cap U(n)$ is a maximal compact subgroup of $G$.

We say that a $G$-Higgs bundle $(E,\Phi)$ has vanishing rational characteristic classes if, for every character $\chi: G\to\mathbb C^*$, the line bundle $E\times_\chi\mathbb C$ has degree $0$. For $G = GL(n,\mathbb C)$ this means $\deg E = 0$; for semisimple $G$ it is satisfied by every $G$-Higgs bundle.

\begin{theorem}\label{thm:reversal}
Let $G \subset GL(n,\mathbb{C})$ be as above, and let $(E,\Phi)$ be a
polystable $G$-Higgs bundle with vanishing rational characteristic classes over $\Sigma$, with corresponding
completely reducible representation $\rho \colon \pi_1(\Sigma) \to G$. Then the
representation corresponding to the Higgs bundle $(E,-\Phi)$ is
$\big(\overline{\rho}^{\,t}\big)^{-1}$.
\end{theorem}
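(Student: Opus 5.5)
The plan is to go through the harmonic-metric description of the non-abelian Hodge correspondence. By the theorems of Hitchin, Simpson, Corlette and Donaldson (in the $G$-version of \cite{GGM09}), the polystable $G$-Higgs bundle $(E,\Phi)$ carries a harmonic reduction $h$ to the maximal compact subgroup $H = G\cap U(n)$. This reduction solves Hitchin's equation $F_{A_h} + [\Phi,\Phi^{*_h}] = 0$, where $A_h$ is the Chern connection and $\Phi^{*_h}$ is the adjoint of $\Phi$ with respect to $h$. The flat $G$-connection
\[
D = A_h + \Phi + \Phi^{*_h}
\]
has holonomy $\rho$ up to conjugation. Everything will be computed in the representation $G\subset GL(n,\mathbb C)$, where $h$ is a hermitian metric on the associated vector bundle and $*_h$ is the usual hermitian adjoint. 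Because $G$ is self-adjoint, this adjoint restricts to $-d\theta$ on $\mathfrak g$, up to the sign convention $X\mapsto \overline X^{\,t}$.

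\textbf{Step 1: harmonic metric for $(E,-\Phi)$.} The same $h$ is harmonic for $(E,-\Phi)$. The Chern connection depends only on the holomorphic structure and $h$, so it is unchanged. The term $[\Phi,\Phi^{*_h}]$ is quadratic in $\Phi$, so it is invariant under $\Phi\mapsto-\Phi$. Hence the flat connection attached to $(E,-\Phi)$ is
\[
D' = A_h - \Phi - \Phi^{*_h}.
\]

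\textbf{Step 2: compare holonomies.} Fix a unitary frame at the base point. In that frame, write $D = A_h + \Psi$ with $\Psi = \Phi+\Phi^{*_h}$ self-adjoint, so $D' = A_h - \Psi$. The hermitian dual connection $D^\dagger$ is defined by
\[
d\,h(s,t)=h(Ds,t)+h(s,D^\dagger t).
\]
Since $A_h$ is unitary and $\Psi$ is self-adjoint, $D^\dagger = A_h-\Psi = D'$. Parallel transport for $D^\dagger$ is the inverse adjoint of parallel transport for $D$. In a unitary frame this gives
\[
\mathrm{hol}_{D'}(\gamma)=\big(\overline{\mathrm{hol}_D(\gamma)}^{\,t}\big)^{-1}=\theta(\rho(\gamma)).
\]
To make this a $G$-statement, I will work in a local $H$-frame, i.e.\ a unitary frame compatible with the reduction. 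Then $A_h$ is $\mathfrak h$-valued and $\Psi$ takes values in the $(-1)$-eigenspace $\mathfrak m$ of $d\theta$. This means $D' $ is exactly $\theta_*D$, so its holonomy is $\theta\circ\rho$.

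\textbf{Step 3: well-definedness and polystability.} Changing the frame conjugates $\rho$ by some $g\in G$. This changes $\theta\circ\rho$ by $\theta(g)$, so the class in the character variety is well defined. The pair $(E,-\Phi)$ is again polystable with vanishing rational characteristic classes, because $\Phi$-invariant reductions are the same as $(-\Phi)$-invariant reductions. The uniqueness part of the correspondence then identifies $\theta\circ\rho$ as the representation corresponding to $(E,-\Phi)$. It is also completely reducible, since $\theta$ is an automorphism of $G$.

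\textbf{Main obstacle.} I expect the hardest step to be making the identification $-(\cdot)^{*_h} = d\theta$ on $\mathfrak g$ precise for a general self-adjoint $G$ rather than for $GL(n)$. This requires checking that the harmonic reduction of structure group to $H$ is compatible with the ambient unitary reduction, i.e.\ that the Cartan decomposition $\mathfrak g=\mathfrak h\oplus\mathfrak m$ induced by $\theta$ agrees with the splitting into skew-hermitian and hermitian matrices. My first attempt will use Mostow's self-adjointness directly: it gives $\mathfrak h=\mathfrak g\cap\mathfrak u(n)$ and $\mathfrak m=\mathfrak g\cap i\,\mathfrak u(n)$.
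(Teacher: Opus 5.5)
Your proposal is correct and follows essentially the same route as the paper. The paper also shows that the same harmonic metric solves Hitchin's equation for $(E,-\Phi)$, so the flat connection is $\nabla_A-\Psi$; it then proves that $h(\sigma,\tau)$ is constant for a $\nabla$-flat $\sigma$ and a $\nabla'$-flat $\tau$, which is your identity $D'=D^\dagger$, and concludes $\overline{\rho(\gamma)}^{\,t}\rho'(\gamma)=I$ in an $H$-frame, with frame changes handled as conjugation by $k\in H$ where $\theta(k)=k$ (your extra observation that $D'=\theta_*D$ in an $H$-frame is a clean intrinsic restatement of the same fact).
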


\begin{proof}
Let $\mathbb E:= E \times_G \mathbb{C}^n$ be the holomorphic vector bundle associated
to the principal $G$-bundle $E$ via the inclusion $G \subset GL(n,\mathbb{C})$;
equivalently, $\mathbb E$ is obtained by first extending the structure group of $E$ to
$GL(n,\mathbb{C})$ and then taking the associated rank $n$ vector bundle. Via
$\mathfrak{g} \subset \mathfrak{gl}(n,\mathbb{C})$, the adjoint bundle
$E(\mathfrak{g})$ is a holomorphic subbundle of $\operatorname{End}(\mathbb E)$, so that
\[
  \Phi \in H^0\big(\Sigma, E(\mathfrak{g}) \otimes K\big) \subset H^0\big(\Sigma, \operatorname{End}(\mathbb E) \otimes K\big).
\]

By the Hitchin--Simpson theorem \cite{hi, Si88} (see \cite{GGM09} for
general reductive $G$), since $(E,\Phi)$ is polystable with vanishing rational characteristic classes, there is
a reduction of structure group of $E$ to the maximal compact subgroup $H$ such
that the Hitchin equation
\[
  F(A) + [\Phi, \Phi^{*h}] = 0
\]
holds, where $\nabla_A$ is the Chern connection, i.e. the unique $H$-connection
compatible with the holomorphic structure. Since $H \subset U(n)$ preserves the
standard hermitian form on $\mathbb{C}^n$, this reduction induces a hermitian
metric $h$ on $\mathbb E$; $\nabla_A$ is then the Chern connection of $(\mathbb E,h)$, and
$\Phi^{*h}$ denotes the adjoint of $\Phi$ with respect to $h$. Since $\mathfrak g$ is self-adjoint, $\Phi^{*h}\in\Omega^{0,1}(\Sigma,E(\mathfrak g))$. Thus $(E,\Phi,h)$ is a harmonic Higgs
bundle. Here $\nabla_A$ is locally written as $d + A$ for some
$A \in \Omega^1(\Sigma, \mathfrak{g})$, with curvature
$F(A) = dA + A \wedge A$.

Consider the connection
\[
  \nabla = \nabla_A + \Phi + \Phi^{*h} = \nabla_A + \Psi, \qquad \text{where } \Psi = \Phi + \Phi^{*h}.
\]
Since $\Psi$ takes values in $E(\mathfrak{g})$, $\nabla$ is a $G$-connection.
Its curvature is
\[
  F(A) + [\Phi,\Phi^{*h}] + \bar\partial_A \Phi + \partial_A \Phi^{*h},
\]
 the remaining terms being $(2,0)$ or $(0,2)$-forms, which vanish since $\Sigma$ has complex dimension one. The first
two terms cancel by the Hitchin equation, and
$\bar\partial_A\Phi = 0 = \partial_A \Phi^{*h}$ since $\Phi$ is holomorphic.
Hence $\nabla$ is flat, and its holonomy is the representation $\rho$
corresponding to $(E,\Phi)$.

Now consider the $G$-Higgs bundle $(E,-\Phi)$. It is again polystable, since a
reduction of structure group is compatible with $\Phi$ if and only if it is
compatible with $-\Phi$. Moreover, $(E,-\Phi,h)$ is also a harmonic Higgs
bundle, with the same metric $h$: indeed $-\Phi$ is holomorphic and
\[
  F(A) + [-\Phi, (-\Phi)^{*h}] = F(A) + [-\Phi, -\Phi^{*h}] = F(A) + [\Phi, \Phi^{*h}] = 0 .
\]
Therefore, by the same argument, the connection
\[
  \nabla' = \nabla_A - \Phi - \Phi^{*h} = \nabla_A - \Psi
\]
is a flat $G$-connection on $\mathbb E$, and its holonomy $\rho'$ is the representation
corresponding to $(E,-\Phi)$.

Let $\gamma \colon [0,1] \to \Sigma$ be a loop based at $x$. Suppose
$\sigma \colon [0,1] \to \mathbb E$ is a flat section of $\mathbb E$ along $\gamma$ with respect
to $\nabla$. This implies
\[
  \nabla \sigma = 0 \implies (\nabla_A + \Psi)\sigma = 0 \implies \nabla_A \sigma = -\Psi\sigma .
\]
Similarly, let $\tau$ be a flat section of $\mathbb E$ along $\gamma$ with respect to
$\nabla'$. This implies
\[
  \nabla' \tau = 0 \implies \nabla_A \tau = \Psi \tau .
\]
Here all covariant derivatives are taken along $\dot\gamma$. Since $\nabla_A$
is the Chern connection of $(\mathbb E,h)$, it is compatible with $h$, and hence
\[
  \frac{d}{dt}\, h(\sigma,\tau) = h(\nabla_A\sigma,\tau) + h(\sigma,\nabla_A\tau) = h(-\Psi\sigma,\tau) + h(\sigma,\Psi\tau).
\]
Note that $\Psi = \Phi + \Phi^{*h}$ is self-adjoint with respect to $h$, that is,
$h(\Psi\sigma,\tau) = h(\sigma,\Psi\tau)$. Therefore
$h(-\Psi\sigma,\tau) + h(\sigma,\Psi\tau) = 0$, and consequently
\begin{equation}\label{eq:1.1}
  \frac{d}{dt}\, h(\sigma,\tau) = 0 \implies h\big(\sigma(t),\tau(t)\big) \text{ is constant for } t \in [0,1].
\end{equation}

Fix a frame of $\mathbb E_x$ given by a point of the reduction to $H$ over $x$. This
frame is $h$-orthonormal, and with respect to it the holonomies
$\rho, \rho' \colon \pi_1(\Sigma,x) \to G$ of $\nabla, \nabla'$ are given by
$\sigma(1) = \rho(\gamma)\sigma(0)$ and $\tau(1) = \rho'(\gamma)\tau(0)$. Let
$\sigma(0) = v$ and $\tau(0) = w$, where $v,w \in \mathbb E_x$. From \eqref{eq:1.1} we
obtain
\[
  h\big(\sigma(0),\tau(0)\big) = h\big(\sigma(1),\tau(1)\big) \implies h(v,w) = h\big(\rho(\gamma)v, \rho'(\gamma)w\big).
\]
In the chosen orthonormal frame, $h(v,w) = \overline{v}^{\,t} w$, so
\begin{equation}\label{eq:1.2}
  \overline{v}^{\,t} w = \overline{\big(\rho(\gamma)v\big)}^{\,t}\big(\rho'(\gamma)w\big) = \overline{v}^{\,t}\Big(\overline{\rho(\gamma)}^{\,t}\rho'(\gamma)\Big) w .
\end{equation}
Moreover, for any $v,w \in \mathbb E_x$ there exist unique sections $\sigma, \tau$
along $\gamma$ with initial values $v, w$ respectively, such that
$\nabla\sigma = 0$ and $\nabla'\tau = 0$. Therefore \eqref{eq:1.2} holds for
all $v,w \in \mathbb E_x$, which implies
\begin{equation}\label{eq:1.3}
  \overline{\rho(\gamma)}^{\,t}\rho'(\gamma) = I \implies \rho'(\gamma) = \Big(\overline{\rho(\gamma)}^{\,t}\Big)^{-1} = \theta\big(\rho(\gamma)\big).
\end{equation}
Since $\gamma$ was arbitrary, \eqref{eq:1.3} holds for all
$\gamma \in \pi_1(\Sigma,x)$, i.e. $\rho' = \big(\overline{\rho}^{\,t}\big)^{-1}$.
Since $G$ is self-adjoint, $\rho'$ indeed takes values in $G$. Finally, a
different choice of frame from the reduction to $H$ changes $\rho$ and $\rho'$
by conjugation by the same element $k \in H$; as $\theta(k) = k$, the relation
$\rho' = \theta \circ \rho$ is independent of this choice.
\end{proof}

Let $G$ be an affine complex reductive algebraic group, and let
$\operatorname{Hom}(\Gamma, G)$ denote the set of homomorphisms from $\Gamma$ to
$G$. It is an affine algebraic variety, since $\Gamma$ is finitely presented.
The group $G$ acts on $\operatorname{Hom}(\Gamma,G)$ by conjugation, and the
categorical (GIT) quotient
\[
  \mathcal{X}_G(\Gamma) = \operatorname{Hom}(\Gamma,G) /\!\!/ G
\]
is called the $G$-\emph{character variety} of $\Gamma$. Its points are in
bijection with the closed $G$-orbits, which are the  conjugacy classes of
completely reducible representations; in particular,
$\mathcal{X}_G(\Gamma) = \operatorname{Hom}^{cr}(\Gamma,G)/G$, where
$\operatorname{Hom}^{cr}(\Gamma,G)$ denotes the set of completely reducible
representations. Since $G$ is reductive, $\mathcal{X}_G(\Gamma)$ is an affine
algebraic set, cf.\ \cite[\S 11]{si}.

Further, the non-abelian Hodge correspondence \cite{hi, Si88, Co88, GGM09}
gives a bijective correspondence between the moduli space $\mathcal{M}(\Sigma,G)$
of polystable $G$-Higgs bundles with vanishing rational characteristic classes  over $\Sigma$ and the
$G$-character variety of $\Gamma$:
\[
  \mathcal{M}(\Sigma,G) \;\overset{1-1}{\longleftrightarrow}\; \mathcal{X}_G(\Gamma).
\]
Now suppose that $\rho \colon \Gamma \to G$ is a completely reducible
representation. Then, by the non-abelian Hodge correspondence and
Theorem~\ref{thm:reversal}, $\big(\overline{\rho}^{\,t}\big)^{-1}$ is also
completely reducible, being the representation corresponding to the polystable
Higgs bundle $(E,-\Phi)$.

\begin{definition}
A $G$-Higgs bundle $(E,\Phi)$ is called \emph{reversible} if
$(E,\Phi) \cong (E,-\Phi)$.
\end{definition}

\begin{theorem}\label{thm:real-affine}
The moduli space of  polystable reversible $G$-Higgs bundles with vanishing rational characteristic classes can be
given the structure of a real affine algebraic set.
\end{theorem}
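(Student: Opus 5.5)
The plan is to transport the reversibility condition across the non-abelian Hodge correspondence and read it off as the fixed locus of an anti-holomorphic involution of the character variety. By Theorem~\ref{thm:reversal}, if $(E,\Phi)$ corresponds to $\rho$, then $(E,-\Phi)$ corresponds to $\theta\circ\rho$. Since the correspondence $\mathcal{M}(\Sigma,G)\leftrightarrow\mathcal{X}_G(\Gamma)$ is a bijection on isomorphism classes, $(E,\Phi)\cong(E,-\Phi)$ holds if and only if $[\rho]=[\theta\circ\rho]$ in $\mathcal{X}_G(\Gamma)$. So the reversible moduli space is in bijection with the fixed point set of the map
\[
  \hat\theta\colon \mathcal{X}_G(\Gamma)\to\mathcal{X}_G(\Gamma),\qquad [\rho]\mapsto[\theta\circ\rho].
\]
It remains to show that this fixed set is a real affine algebraic set.

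First, $\hat\theta$ is well defined and is an involution. On $\operatorname{Hom}(\Gamma,G)\subset G^{2g}$ (using the generators $a_i,b_i$ and the single relation), the map $\rho\mapsto\theta\circ\rho$ is a homomorphism-preserving map, because $\theta$ is a group automorphism. It is anti-regular: in matrix entries it is complex conjugation composed with the regular map $A\mapsto (A^t)^{-1}$. It satisfies $\theta\circ(g\rho g^{-1})=\theta(g)(\theta\circ\rho)\theta(g)^{-1}$, so it descends to conjugacy classes. It preserves complete reducibility, either by the remark after Theorem~\ref{thm:reversal} or directly because it maps closed orbits to closed orbits. Finally $\theta^2=\mathrm{id}$.

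Next, I make the real structure explicit on the coordinate ring. Choose generators $f_1,\dots,f_N$ of $\mathbb{C}[\operatorname{Hom}(\Gamma,G)]^G$, giving a closed embedding $\mathcal{X}_G(\Gamma)\hookrightarrow\mathbb{C}^N$. The pullback $f\mapsto\overline{f\circ\theta}$, with conjugation applied to coefficients, is a $\mathbb{C}$-linear automorphism of the invariant ring, since $\theta$ normalizes the $G$-action. Hence each $\overline{f_j\circ\theta}$ is a complex polynomial $P_j(f_1,\dots,f_N)$. In the coordinates $z\in\mathbb{C}^N$, $\hat\theta$ is therefore given by $z\mapsto \overline{P(z)}$. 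Identifying $\mathbb{C}^N=\mathbb{R}^{2N}$, the fixed set is cut out inside the real algebraic set $\mathcal{X}_G(\Gamma)\subset\mathbb{R}^{2N}$ by the real polynomial equations $\operatorname{Re}(z_j-\overline{P_j(z)})=0$ and $\operatorname{Im}(z_j-\overline{P_j(z)})=0$. It is thus a real affine algebraic set. Transporting it through the bijection gives the required structure on the reversible moduli space.

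The main obstacle is the descent step: checking that $\hat\theta$ really is induced by an anti-regular map on the GIT quotient, that is, that conjugating the invariant ring through $\theta$ lands back in the invariant ring. This needs $\theta(G)=G$ (self-adjointness) and $\theta$ being an anti-holomorphic group automorphism, so that $f\circ\theta$ is $\overline{\text{invariant}}$. A secondary point is that points of the quotient are exactly the closed orbits. This ensures that equality in $\mathcal{X}_G(\Gamma)$ matches the isomorphism $(E,\Phi)\cong(E,-\Phi)$ of polystable objects, which is what the non-abelian Hodge bijection provides.
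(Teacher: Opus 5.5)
Your proposal is correct and follows the paper's proof essentially step for step. The paper likewise lifts $\rho\mapsto\theta\circ\rho$ to $\operatorname{Hom}(\Gamma,G)$ and checks that it intertwines conjugation by $g$ with conjugation by $\theta(g)$ and preserves closed orbits. It then observes that $\overline{f\circ\tilde\imath}$ is again a $G$-invariant polynomial, so the induced involution is a morphism of the underlying real affine variety, and takes the fixed set as the preimage of the diagonal under $(\mathrm{id},i)$. Your explicit equations $z_j=\overline{P_j(z)}$ describe exactly this set.

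One small correction: $f\mapsto\overline{f\circ\theta}$ is a conjugate-linear ring automorphism of the invariant ring, not a $\mathbb{C}$-linear one. This does not affect your argument, since you only use that it maps the invariant ring to itself.
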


\begin{proof}
Since $G$ is self-adjoint, the map
\[
  \tilde\imath \colon \operatorname{Hom}(\Gamma,G) \longrightarrow \operatorname{Hom}(\Gamma,G),
  \qquad \rho \longmapsto \theta\circ\rho = \big(\overline{\rho}^{\,t}\big)^{-1},
\]
is well defined. It satisfies $\tilde\imath\circ\tilde\imath=\mathrm{id}$ and
$\tilde\imath(g\rho g^{-1}) = \theta(g)\,\tilde\imath(\rho)\,\theta(g)^{-1}$ with
$\theta(g)\in G$; hence it maps $G$-orbits onto $G$-orbits and, being a homeomorphism,
closed orbits onto closed orbits. Since $\mathcal{X}_G(\Gamma)=\operatorname{Hom}^{cr}(\Gamma,G)/G$
and completely reducible representations are exactly those with closed orbits,
$\tilde\imath$ induces an involution
\[
  i \colon \mathcal{X}_G(\Gamma) \longrightarrow \mathcal{X}_G(\Gamma),
  \qquad [\rho] \longmapsto \Big[\big(\overline{\rho}^{\,t}\big)^{-1}\Big].
\]

The map $\tilde\imath$ is given by complex conjugation, transposition and inversion of
matrices. Hence, for every $G$-invariant polynomial function $f$ on
$\operatorname{Hom}(\Gamma,G)$, the function $\overline{f\circ\tilde\imath}$ is again a
$G$-invariant polynomial function. Since the coordinate ring of $\mathcal{X}_G(\Gamma)$ is
the ring of $G$-invariant polynomial functions on $\operatorname{Hom}(\Gamma,G)$, it follows
that, regarding $\mathcal{X}_G(\Gamma)$ as a real affine variety via the real and imaginary
parts of its coordinates, $i$ is a morphism of real affine varieties. Therefore its fixed
point set
\[
  \operatorname{Fix}(i)
  = \Big\{ [\rho] \in \mathcal{X}_G(\Gamma) :
    \big(\overline{\rho}^{\,t}\big)^{-1} = g\rho g^{-1} \text{ for some } g \in G \Big\},
\]
which is the preimage of the diagonal under the morphism $(\mathrm{id}, i)$, is a Zariski
closed subset of $\mathcal{X}_G(\Gamma)$, and hence a real affine algebraic set.

Further, let $[(E,\Phi)] \in \mathcal{M}(\Sigma,G)$ correspond to
$[\rho] \in \mathcal{X}_G(\Gamma)$ under the non-abelian Hodge correspondence.
By Theorem~\ref{thm:reversal}, $[(E,-\Phi)]$ corresponds to $i([\rho])$. Since
the correspondence is bijective, $(E,\Phi) \cong (E,-\Phi)$ if and only if
$i([\rho]) = [\rho]$. We thus obtain a bijective correspondence
\[
  \operatorname{Fix}(i) \;\overset{1-1}{\longleftrightarrow}\;
  \big\{[(E,\Phi)] \in \mathcal{M}(\Sigma,G) : (E,\Phi) \cong (E,-\Phi)\big\}.
\]
Using this bijection, we transport the real affine algebraic set structure of
$\operatorname{Fix}(i)$ to the moduli space of reversible $G$-Higgs bundles.
\end{proof}

\section{The reversible \texorpdfstring{$(B,A,A)$}{(B,A,A)}-brane}\label{revbrane}

Throughout this section, $G$ denotes a complex connected semisimple Lie group
with Lie algebra $\mathfrak{g}$, and $H \subset G$ a maximal compact subgroup
with Lie algebra $\mathfrak{h}$.

\medskip
\noindent\textbf{Topological types.}
The topological types of principal $G$-bundles on $\Sigma$ are classified by
the elements $c \in \pi_1(G)$. We denote by $\mathcal{M}^c(G)$ the subvariety of
the moduli space $\mathcal{M}(G) = \mathcal{M}(\Sigma, G)$ of $G$-Higgs bundles
consisting of those $G$-Higgs bundles whose underlying $G$-bundle has
topological invariant $c \in \pi_1(G)$; each $\mathcal{M}^c(G)$ is connected
\cite{Li93}. Moreover, if $h$ denotes the Hitchin map (see
\S\ref{sec:hitchin} below), then for $a$ in the complement of the
discriminant locus, the connected components of the fibre $h^{-1}(a)$ are in
bijection with those of $\mathcal{M}(G)$, and hence are also classified by
$\pi_1(G)$. If we restrict $h$ to $\mathcal{M}^c(G)$, both the fibre and the
total space are connected \cite{dp}, cf.\ \cite[\S 2]{bgh}.

We denote by $\mathcal{M}^c_*(G) \subset \mathcal{M}^c(G)$ the smooth locus,
consisting of stable $G$-Higgs bundles corresponding to irreducible solutions
of the Hitchin equations (see below), and by
\[
  \mathcal{M}^{c,\mathrm{rev}}_*(G) := \big\{ [(E,\Phi)] \in \mathcal{M}^c_*(G) : (E,\Phi) \cong (E,-\Phi) \big\}
\]
the moduli space of  reversible $G$-Higgs bundles in the smooth locus of topological type
$c$. When $G$ is simply connected (for instance $G = SL(n,\mathbb{C})$), only
$c = 0$ occurs, and we simply write $\mathcal{M}^{\mathrm{rev}}$.

\medskip
\noindent\textbf{The gauge-theoretic set-up \cite[\S 2.4]{bg}.}
Let $E_G$ be a smooth principal $G$-bundle over $\Sigma$ of topological type
$c$, and let $E_H$ be a fixed reduction of $E_G$ to $H$. The set $\mathcal{A}$
of $H$-connections on $E_H$ is an affine space modelled on
$\Omega^1(\Sigma, E_H(\mathfrak{h}))$. Via the Chern correspondence,
$\mathcal{A}$ is in one-to-one correspondence with the set $\mathcal{C}$ of
holomorphic structures on $E_G$ \cite[pp.~191--192, Prop.~5]{at}, which is an
affine space modelled on $\Omega^{0,1}(\Sigma, E_G(\mathfrak{g}))$. Let
$\Omega := \Omega^{1,0}(\Sigma, E_G(\mathfrak{g}))$ and consider
\[
  \mathcal{X} := \mathcal{A} \times \Omega .
\]
Its tangent space at every point is canonically identified with
$\Omega^{0,1}(\Sigma, E_G(\mathfrak{g})) \oplus \Omega^{1,0}(\Sigma, E_G(\mathfrak{g}))$.
Let
\[
  \tau \colon \Omega^{1,0}(\Sigma, E_G(\mathfrak{g})) \longrightarrow \Omega^{0,1}(\Sigma, E_G(\mathfrak{g}))
\]
be the conjugate-linear isomorphism induced by the compact conjugation of
$\mathfrak{g}$ with respect to $\mathfrak{h}$, combined with conjugation of
forms; we use the same symbol for its inverse. Via the identification
$\mathcal{A} \cong \mathcal{C}$, for
$\alpha \in \Omega^{0,1}(\Sigma, E_G(\mathfrak{g}))$ and
$\beta \in \Omega^{1,0}(\Sigma, E_G(\mathfrak{g}))$ we have the following three
complex structures on $\mathcal{X}$:
\begin{align*}
  J_1(\alpha,\beta) &= \big(\sqrt{-1}\,\alpha,\ \sqrt{-1}\,\beta\big),\\
  J_2(\alpha,\beta) &= \big(-\sqrt{-1}\,\tau(\beta),\ \sqrt{-1}\,\tau(\alpha)\big),\\
  J_3(\alpha,\beta) &= \big(\tau(\beta),\ -\tau(\alpha)\big).
\end{align*}
Consider also the Riemannian metric $\mathrm{g}$ on $\mathcal{X}$ defined, for
$\alpha,\alpha' \in \Omega^{0,1}(\Sigma, E_G(\mathfrak{g}))$ and
$\beta,\beta' \in \Omega^{1,0}(\Sigma, E_G(\mathfrak{g}))$, by
\[
  \mathrm{g}\big((\alpha,\beta),(\alpha',\beta')\big)
  = \operatorname{Re}\left[\, -2\sqrt{-1}\int_\Sigma
      \Big( B\big(\tau(\alpha),\alpha'\big) + B\big(\beta,\tau(\beta')\big) \Big) \right],
\]
where $B$ is the Killing form. The expression in brackets is a Hermitian form on the tangent space, positive definite on the diagonal; $\mathrm g$ is its real part. The complex structures $J_1, J_2, J_3$ satisfy
the quaternion relations (in particular $J_3 = J_1 J_2$), and together with
$\mathrm{g}$ they define a hyper-K\"ahler structure on $\mathcal{X}$, with
K\"ahler forms $\omega_i(\cdot,\cdot) = \mathrm{g}(J_i\,\cdot,\cdot)$,
$i = 1,2,3$.

The gauge group $\mathcal{H}$ of $E_H$ acts on $\mathcal{X}$ preserving the
hyper-K\"ahler structure, with moment maps \cite{hi}
\[
  \mu_1(A,\varphi) = F_A - [\varphi, \tau(\varphi)], \qquad
  \mu_2(A,\varphi) = \operatorname{Re}\big(\bar\partial_A\varphi\big), \qquad
  \mu_3(A,\varphi) = \operatorname{Im}\big(\bar\partial_A\varphi\big).
\]
The quotient $\mu^{-1}(0)/\mathcal{H}$, where $\mu = (\mu_1,\mu_2,\mu_3)$, is
the moduli space of solutions of the Hitchin equations. Restricting to the
subset $\mu^{-1}_*(0) \subset \mu^{-1}(0)$ of irreducible solutions, the
hyper-K\"ahler quotient
\[
  \mathcal{M}_* := \mu^{-1}_*(0)/\mathcal{H}
\]
is a hyper-K\"ahler manifold which, by \cite[Theorem~2.3]{bg}, is
homeomorphic to the smooth locus $\mathcal{M}^c_*(G)$, via
$[(A,\varphi)] \mapsto [(E_G, \bar\partial_A, \varphi)]$. The Riemannian
metric $\mathrm{g}$ and the complex structures $J_1, J_2, J_3$ on
$\mathcal{M}_*$ are precisely those induced from the corresponding structures
on $\mathcal{X}$ by the quotient construction: the tangent space of $\mathcal M_*$ at $[(A,\varphi)]$ is identified with the $\mathrm g$-orthogonal complement, in $T_{(A,\varphi)}\mu^{-1}_*(0)$, of the tangent space to the $\mathcal H$-orbit through $(A,\varphi)$.

\medskip
\noindent\textbf{A standard fact.}
Let $(M, \mathrm{g}, I)$ be a K\"ahler manifold with K\"ahler form
$\omega = \mathrm{g}(I\cdot,\cdot)$, and let $\sigma \colon M \to M$ be an
anti-holomorphic isometric involution. Then $\operatorname{Fix}(\sigma)$ is a
(real) Lagrangian submanifold of $(M,\omega)$. Indeed, $\operatorname{Fix}(\sigma)$
is a submanifold since $\sigma$ is an isometric involution, and at $p \in
\operatorname{Fix}(\sigma)$ its tangent space is the $(+1)$-eigenspace $V_+$ of
$d\sigma_p$. Since $d\sigma_p \circ I = -I \circ d\sigma_p$, the complex
structure $I$ interchanges $V_+$ and the $(-1)$-eigenspace $V_-$, so
$\dim_{\mathbb{R}} V_+ = \frac{1}{2}\dim_{\mathbb{R}} M$. For $u,v \in V_+$,
\[
  \omega(u,v) = \mathrm{g}(Iu,v) = \mathrm{g}\big(d\sigma_p(Iu), d\sigma_p(v)\big) = -\mathrm{g}(Iu,v) = -\omega(u,v),
\]
so $\omega|_{V_+} = 0$.

\medskip
Following \cite{kw}, a Lagrangian submanifold of a symplectic manifold is
called an $A$-brane and a complex submanifold a $B$-brane. A submanifold of a
hyper-K\"ahler manifold may be of type $A$ or $B$ with respect to each of the
complex or symplectic structures, and thus, choosing a triple of structures,
one may speak of branes of type $(B,B,B)$, $(B,A,A)$, $(A,B,A)$ and $(A,A,B)$.

\begin{theorem}\label{thm:BAA}
Let $G$ be a complex connected semisimple Lie group and $c \in \pi_1(G)$. The
moduli space $\mathcal{M}^{c,\mathrm{rev}}_*(G)$ of reversible
$G$-Higgs bundles in the smooth locus of topological type $c$ defines a canonical
$(B,A,A)$-brane in $\mathcal{M}^c_*(G)$ in the sense of Kapustin--Witten.
\end{theorem}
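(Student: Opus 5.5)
The plan is to realize $\mathcal{M}^{c,\mathrm{rev}}_*(G)$ as the fixed point set of an involution of the hyper-K\"ahler manifold $\mathcal{M}_*$, and to check how that involution interacts with $\mathrm g$, $J_1$, $J_2$, $J_3$. On $\mathcal{X} = \mathcal{A}\times\Omega$ define
\[
  \iota(A,\varphi) = (A,-\varphi).
\]
This map commutes with the action of $\mathcal{H}$. It preserves $\mu^{-1}(0)$, since $\mu_1$ is quadratic in $\varphi$ and $\mu_2,\mu_3$ change sign. It also preserves irreducibility, because the stabilizers of $(A,\varphi)$ and $(A,-\varphi)$ coincide. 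So $\iota$ descends to an involution $\iota$ of $\mathcal{M}_*$. Under the homeomorphism $\mathcal{M}_*\cong\mathcal{M}^c_*(G)$ of \cite[Theorem~2.3]{bg}, this involution is $[(E,\Phi)]\mapsto[(E,-\Phi)]$. Its fixed locus is therefore exactly $\mathcal{M}^{c,\mathrm{rev}}_*(G)$; it is canonical, as it involves no choices. The topological type is preserved because the underlying bundle $E_G$ does not change.

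Next I compute the differential. At every point, $d\iota(\alpha,\beta) = (\alpha,-\beta)$ on $\mathcal{X}$.
\begin{itemize}
\item[(i)] $d\iota$ commutes with $J_1$, so $\iota$ is $J_1$-holomorphic.
\item[(ii)] Using the conjugate-linearity of $\tau$, $d\iota\circ J_2(\alpha,\beta) = (-\sqrt{-1}\tau\beta, -\sqrt{-1}\tau\alpha)$, while $J_2\circ d\iota(\alpha,\beta) = (\sqrt{-1}\tau\beta, \sqrt{-1}\tau\alpha)$. Hence $d\iota J_2 = -J_2\, d\iota$, and likewise $d\iota J_3 = -J_3\, d\iota$ (this also follows from $J_3 = J_1J_2$).
\item[(iii)] The metric $\mathrm g$ is visibly invariant, since each summand in its formula is bilinear in $(\alpha,\alpha')$ or $(\beta,\beta')$ and the signs cancel.
\end{itemize}

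These properties must descend to $\mathcal{M}_*$. Since $\iota$ is an $\mathcal{H}$-equivariant isometry preserving $\mu^{-1}_*(0)$, $d\iota$ maps the tangent space of the $\mathcal H$-orbit through $(A,\varphi)$ to the orbit tangent space at $\iota(A,\varphi)$. It therefore carries $\mathrm g$-orthogonal complements to $\mathrm g$-orthogonal complements. With the description of $T\mathcal{M}_*$ as this horizontal complement, the induced map on $\mathcal{M}_*$ is an isometry that is $J_1$-holomorphic and $J_2$-, $J_3$-antiholomorphic.

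The conclusion then follows from two facts. The fixed locus of a holomorphic isometric involution of the K\"ahler manifold $(\mathcal M_*,\mathrm g,J_1)$ is a complex submanifold: it is a totally geodesic submanifold whose tangent space, the $+1$-eigenspace of $d\iota$, is $J_1$-stable. This gives type $B$ for $J_1$. Applying the standard fact above to $(\mathcal{M}_*,\mathrm g,J_2)$ and $(\mathcal{M}_*,\mathrm g,J_3)$ shows the fixed locus is Lagrangian for $\omega_2$ and $\omega_3$, giving type $(B,A,A)$.

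I expect the main obstacle to be the descent step. One must verify carefully that $d\iota$ preserves the horizontal subspaces; it must also be checked that the identification with $\mathcal{M}^c_*(G)$ intertwines the two involutions, so that fixed points in the gauge-theoretic quotient are exactly the isomorphism classes with $(E,\Phi)\cong(E,-\Phi)$. In the second check, an isomorphism $(E,\Phi)\cong(E,-\Phi)$ of stable Higgs bundles lifts, by uniqueness of the harmonic reduction, to a gauge transformation in $\mathcal{H}$ relating $(A,\varphi)$ and $(A,-\varphi)$. I would handle this using uniqueness of solutions of the Hitchin equations up to $\mathcal H$, as in the proof of Theorem~\ref{thm:reversal}, where the same metric $h$ serves $\pm\Phi$.
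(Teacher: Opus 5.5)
Your proposal is correct and follows essentially the same route as the paper: the same involution $(A,\varphi)\mapsto(A,-\varphi)$ on $\mathcal{X}$, the same checks on $\mu$, $J_1,J_2,J_3$ and $\mathrm g$, the same descent via $\mathrm g$-orthogonal complements of orbit tangent spaces, and the standard fixed-point facts. The only differences are cosmetic. You obtain $J_1$-complexity from $J_1$-stability of the $+1$-eigenspace of a totally geodesic fixed set, where the paper uses Cartan linearization. You also spell out, via uniqueness of harmonic reductions, why the identification with $\mathcal{M}^c_*(G)$ intertwines the two involutions, which the paper takes for granted.
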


\begin{proof}
Consider the involution
\[
  \sigma \colon \mathcal{X} \longrightarrow \mathcal{X}, \qquad \sigma(\alpha,\beta) = (\alpha,-\beta),
\]
that is, $(\bar\partial_A, \varphi) \mapsto (\bar\partial_A, -\varphi)$. Since
$\sigma$ is an affine map of the affine space $\mathcal{X}$, its differential
at every point is the linear map $d\sigma(\alpha,\beta) = (\alpha,-\beta)$ on
the tangent space.

The involution $\sigma$ is holomorphic with respect to $J_1$, because
\[
  d\sigma \circ J_1(\alpha,\beta) = \sigma\big(\sqrt{-1}\,\alpha, \sqrt{-1}\,\beta\big) = \big(\sqrt{-1}\,\alpha, -\sqrt{-1}\,\beta\big) = J_1 \circ d\sigma(\alpha,\beta).
\]
Moreover, $\sigma$ is anti-holomorphic with respect to $J_2$, since
\begin{align*}
  d\sigma \circ J_2(\alpha,\beta) &= \sigma\big(-\sqrt{-1}\,\tau(\beta), \sqrt{-1}\,\tau(\alpha)\big) = \big(-\sqrt{-1}\,\tau(\beta), -\sqrt{-1}\,\tau(\alpha)\big),\\
  J_2 \circ d\sigma(\alpha,\beta) &= J_2(\alpha,-\beta) = \big(-\sqrt{-1}\,\tau(-\beta), \sqrt{-1}\,\tau(\alpha)\big) = \big(\sqrt{-1}\,\tau(\beta), \sqrt{-1}\,\tau(\alpha)\big),
\end{align*}
which implies $d\sigma \circ J_2 = -J_2 \circ d\sigma$. Similarly,
\[
  d\sigma \circ J_3(\alpha,\beta) = \big(\tau(\beta), \tau(\alpha)\big) = -J_3(\alpha,-\beta) = -J_3 \circ d\sigma(\alpha,\beta),
\]
so $\sigma$ is anti-holomorphic with respect to $J_3$. The involution $\sigma$
is also isometric:
\begin{align*}
  \mathrm{g}\big(d\sigma(\alpha,\beta), d\sigma(\alpha',\beta')\big)
  &= \operatorname{Re}\left[ -2\sqrt{-1}\int_\Sigma \Big( B\big(\tau(\alpha),\alpha'\big)
     + B\big(-\beta,\tau(-\beta')\big) \Big) \right]\\
  &= \operatorname{Re}\left[ -2\sqrt{-1}\int_\Sigma \Big( B\big(\tau(\alpha),\alpha'\big)
     + B\big(\beta,\tau(\beta')\big) \Big) \right]
   = \mathrm{g}\big((\alpha,\beta),(\alpha',\beta')\big).
\end{align*}

Since $\tau$ is real-linear, $\tau(-\varphi) = -\tau(\varphi)$, hence $\mu_1(A,-\varphi) = \mu_1(A,\varphi)$. Moreover, since $\bar\partial_A$ is linear, $\mu_2(A,-\varphi) = -\mu_2(A,\varphi)$ and $\mu_3(A,-\varphi) = -\mu_3(A,\varphi)$.
Therefore $\sigma$ preserves $\mu^{-1}(0)$. It also preserves the subset
$\mu^{-1}_*(0)$ of irreducible solutions, since a reduction of structure group
is compatible with $\varphi$ if and only if it is compatible with $-\varphi$.
Moreover, $\sigma$ is $\mathcal{H}$-equivariant, since the gauge action
$(A,\varphi) \mapsto (k\cdot A, \operatorname{Ad}_k\varphi)$ commutes with
$\varphi \mapsto -\varphi$. Therefore $\sigma$ induces an involution
\[
  \tilde\sigma \colon \mathcal{M}_*  \longrightarrow \mathcal{M}_* .
\]
Since $\sigma$ is $\mathcal{H}$-equivariant and isometric, $d\sigma$ preserves
the tangent spaces of the $\mathcal{H}$-orbits and hence their
$\mathrm{g}$-orthogonal complements. Consequently, the properties above descend
to the quotient: $\tilde\sigma$ is an isometry of $\mathcal{M}_*$ which is
holomorphic with respect to $J_1$ and anti-holomorphic with respect to $J_2$
and $J_3$.

Since $\tilde\sigma$ is a holomorphic involution with respect to $J_1$,
$\operatorname{Fix}(\tilde\sigma)$ is a complex submanifold of $\mathcal{M}_*$
with respect to $J_1$ (by Cartan's linearization theorem, near a fixed point
$\tilde\sigma$ is linear in suitable holomorphic coordinates). Further, since
$\tilde\sigma$ is an isometric involution which is anti-holomorphic with
respect to $J_2$, the standard fact above shows that
$\operatorname{Fix}(\tilde\sigma)$ is a real Lagrangian submanifold of
$\mathcal{M}_*$ with respect to the K\"ahler structure $(\mathrm{g},
\omega_2, J_2)$. Similarly, since $\tilde\sigma$ is anti-holomorphic with
respect to $J_3$, $\operatorname{Fix}(\tilde\sigma)$ is a real Lagrangian
submanifold with respect to $(\mathrm{g}, \omega_3, J_3)$.

Finally, under the homeomorphism $\mathcal{M}_* \cong \mathcal{M}^c_*(G)$ of
\cite[Theorem~2.3]{bg}, the involution $\tilde\sigma$ corresponds to
$[(E,\Phi)] \mapsto [(E,-\Phi)]$. Hence $\operatorname{Fix}(\tilde\sigma)$
corresponds to $\mathcal{M}^{c,\mathrm{rev}}_*(G)$. Therefore
$\mathcal{M}^{c,\mathrm{rev}}_*(G)$ is complex with respect to $J_1$ and
Lagrangian with respect to $\omega_2$ and $\omega_3$, that is, it defines a
$(B,A,A)$-brane.
\end{proof}

% -------------------------------------------------------------------
\section{Hitchin fibration}\label{sec:hitchin}

A natural way of studying $\mathcal{M}(G)$ is through the Hitchin fibration.
Here $G$ may be any complex connected reductive group. We denote by $p_i$,
$i = 1, \dots, k$, a homogeneous basis for the algebra of invariant polynomials
of $\mathfrak{g}$, and by $d_i$ their degrees. The Hitchin fibration is given by
\begin{equation}\label{eq:2.1}
  h \colon \mathcal{M}(G) \longrightarrow \mathcal{A}_G := \bigoplus_{i=1}^k H^0\big(\Sigma, K^{d_i}\big),
\end{equation}
\begin{equation}\label{eq:2.2}
  (E,\Phi) \longmapsto \big(p_1(\Phi), \dots, p_k(\Phi)\big).
\end{equation}
The map $h$ is referred to as the Hitchin map.

Let $\mathfrak{g}$ be one of the classical Lie algebras
$\mathfrak{gl}(n,\mathbb{C})$, $\mathfrak{sl}(n,\mathbb{C})$,
$\mathfrak{sp}(2n,\mathbb{C})$, $\mathfrak{so}(2n+1,\mathbb{C})$. Then, for the
standard representation $\mathfrak{g} \subset \mathfrak{gl}(V)$, the ring of
invariant polynomials of $\mathfrak{g}$ is generated by the functions
$X \mapsto \operatorname{tr}(X^i)$, $i \in \mathbb{N}$. Hence, for
$GL(n,\mathbb{C})$-Higgs bundles $(E,\Phi)$ of rank $n$, a homogeneous basis of
invariant polynomials can be taken to be $\operatorname{tr}(\Phi^i)$,
$1 \leq i \leq n$. Equivalently, by Newton's identities, one may use the
coefficients of the characteristic polynomial of $\Phi$, which we do below.

Let $\operatorname{Tot}(K)$ be the total space of $K$, with projection
$\pi \colon \operatorname{Tot}(K) \to \Sigma$, and denote by $\eta$ the
tautological section of $\pi^*K$ on $\operatorname{Tot}(K)$. Abusing notation,
we denote sections of the powers $K^i$ on $\Sigma$ and their pullbacks to
$\operatorname{Tot}(K)$ by the same symbols. For a Higgs bundle $(E,\Phi)$
with $h(E,\Phi) = a$, the characteristic polynomial of $\Phi$ defines a curve
$S_a \subset \operatorname{Tot}(K)$, the \emph{spectral curve} of $\Phi$, with
equation
\begin{equation}\label{eq:2.3}
  \det\big(\eta\,\mathrm{Id} - \pi^*\Phi\big) = \eta^n + a_1\eta^{n-1} + a_2\eta^{n-2} + \cdots + a_{n-1}\eta + a_n = 0,
\end{equation}
where $a_i \in H^0(\Sigma, K^i)$. For generic $a$, the curve $S_a$ is smooth,
and $\pi \colon S_a \to \Sigma$ is an $n$-sheeted branched covering. For
simplicity, we write $\det(\eta - \Phi)$ for the characteristic polynomial of
$\Phi$.

Now we consider reversible $GL(n,\mathbb{C})$-Higgs bundles of even rank $n$.
In the rest of this section, $\mathcal M(GL(n,\mathbb C))$ denotes the moduli space of polystable $GL(n,\mathbb C)$-Higgs bundles of all degrees. 
Let $(E,\Phi)$ be a reversible Higgs bundle of even rank, and suppose that
$h(E,\Phi) = a \in \bigoplus_{i=1}^n H^0(\Sigma, K^i)$. Let $S_a$ be the
corresponding spectral curve. Since $(E,\Phi) \cong (E,-\Phi)$, we have
$h(E,-\Phi) = a$. This implies
$\operatorname{tr}(\Phi^i) = \operatorname{tr}((-\Phi)^i)$ for all
$1 \leq i \leq n$. For odd $i$, $\operatorname{tr}(\Phi^i) =
\operatorname{tr}((-\Phi)^i) = -\operatorname{tr}(\Phi^i)$, so
$\operatorname{tr}(\Phi^i) = 0$. By Newton's identities, this implies $a_i = 0$
for all odd $i$. Therefore the equation of $S_a$ becomes
\[
  \eta^n + a_2\eta^{n-2} + \cdots + a_{n-2}\eta^2 + a_n = 0 .
\]
Hence $S_a$ has a natural involution $\sigma(x,\eta) = (x,-\eta)$, which
satisfies $\pi \circ \sigma = \pi$. 

We now restrict the Hitchin map $h$ to the moduli space of reversible
$GL(n,\mathbb{C})$-Higgs bundles and describe its fibres.

\begin{theorem}\label{thm:even-fibre}
Let $(E,\Phi)$ be a reversible $GL(n,\mathbb{C})$-Higgs bundle with $n$ even.
Suppose that $h(E,\Phi) = a$ and that the spectral curve $S_a$ is smooth. Then
the reversible Higgs bundles in the Hitchin fibre $h^{-1}(a)$ are in bijective
correspondence with the line bundles $L \in \operatorname{Pic}(S_a)$ such that
$L \cong \sigma^* L$.
\end{theorem}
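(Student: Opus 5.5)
We will use the spectral correspondence of Beauville--Narasimhan--Ramanan \cite{bnr}. For a smooth spectral curve $S_a$, the fibre $h^{-1}(a)$ is identified with $\operatorname{Pic}(S_a)$ (all degrees, matching our convention that $\mathcal M(GL(n,\mathbb C))$ contains all degrees). To $L\in\operatorname{Pic}(S_a)$ one associates $E=\pi_*L$, with Higgs field $\Phi=\pi_*(\eta\cdot)$ given by multiplication by the tautological section $\eta$. Conversely, $L$ is recovered from $(E,\Phi)$ as the cokernel of $\eta-\pi^*\Phi$ on $\operatorname{Tot}(K)$, or as the eigenline sheaf. Since $S_a$ is smooth and hence integral, every Higgs bundle in $h^{-1}(a)$ is stable, so no polystability issue arises. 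Isomorphism classes of $L$ then correspond exactly to isomorphism classes of Higgs pairs.

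The key step is to identify the spectral line bundle of $(E,-\Phi)$. Because $a_i=0$ for odd $i$ (shown above), $h(E,-\Phi)=a$, and $(E,-\Phi)$ has the same spectral curve $S_a$. We claim that its spectral line bundle is $\sigma^*L$. Since $\pi\circ\sigma=\pi$, there is a canonical isomorphism of $\mathcal O_\Sigma$-modules $\pi_*\sigma^*L\cong\pi_*L=E$. Under this isomorphism, multiplication by $\eta$ on $\sigma^*L$ corresponds to multiplication by $\sigma^*\eta=-\eta$ on $L$. Hence
\[
(\pi_*\sigma^*L,\ \pi_*(\eta\cdot))\cong(E,-\Phi).
\]
To check this locally, take a local section $s$ of $\sigma^*L$ over $\pi^{-1}(U)$, which is a section of $L$ over $\sigma(\pi^{-1}(U))=\pi^{-1}(U)$. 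Then $\eta\cdot s$ at a point $y$ equals $\eta(y)s(y)$, and read on $L$ at $\sigma(y)$ this is $-\eta(\sigma(y))$ times the value. An alternative formulation: $\sigma$ lifts to the involution $\eta\mapsto-\eta$ of $\operatorname{Tot}(K)$, which sends the defining section $\eta-\pi^*\Phi$ to $-(\eta+\pi^*\Phi)$. Pulling back the cokernel presentation by $\sigma$ therefore gives the cokernel presentation of $-\Phi$.

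With the claim in hand, the bijectivity of the BNR correspondence gives
\[
(E,\Phi)\cong(E,-\Phi)\iff L\cong\sigma^*L .
\]
Restricting the BNR bijection to the reversible locus of $h^{-1}(a)$ therefore yields the stated bijection with $\sigma$-invariant line bundles. I expect the main obstacle to be the sign bookkeeping and canonicity in the claim, namely making precise that the $\mathcal O_\Sigma$-module isomorphism $\pi_*\sigma^*L\cong\pi_*L$ intertwines $\eta$ with $-\eta$. I would handle this with the cokernel-presentation argument, because it makes the identification functorial and avoids any dependence on choices.
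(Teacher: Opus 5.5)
Your proposal is correct and follows essentially the same route as the paper. Both arguments use the BNR bijection and the canonical identification $\pi_*\sigma^*L\cong\pi_*L$, under which multiplication by $\eta$ on $\sigma^*L$ becomes multiplication by $\sigma^*\eta=-\eta$ on $L$, so that $\sigma^*L$ is the spectral line bundle of $(E,-\Phi)$; injectivity of BNR then gives $(E,\Phi)\cong(E,-\Phi)\iff L\cong\sigma^*L$, and your cokernel-presentation variant via the lift $\eta\mapsto-\eta$ on $\operatorname{Tot}(K)$ is a valid, more functorial version of the same sign check.
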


\begin{proof}
By \cite{bnr}, there is a bijective correspondence between the (isomorphism
classes of) Higgs bundles in $h^{-1}(a)$ and the (isomorphism classes of) line
bundles on the smooth spectral curve $S_a$. Explicitly, for a line bundle $L$
on $S_a$, we obtain a Higgs bundle by considering the direct image
$E := \pi_*L$. Recall that, by definition of the direct image, for an open set
$U \subset \Sigma$ one has $H^0(\pi^{-1}(U), L) = H^0(U, \pi_*L)$.
Multiplication by the tautological section $\eta$ induces the map
\[
  H^0\big(\pi^{-1}(U), L\big) \longrightarrow H^0\big(\pi^{-1}(U), L \otimes \pi^*K\big),
\]
which, by definition of the direct image, gives
$\Phi \colon \pi_*L \to \pi_*L \otimes K$. Thus we obtain a Higgs field
$\Phi \in H^0(\Sigma, \operatorname{End}(E) \otimes K)$.

Now consider the pullback line bundle $\sigma^*L$ on $S_a$. Since
$\pi \circ \sigma = \pi$, we have $\sigma(\pi^{-1}(U)) = \pi^{-1}(U)$ for every
open set $U \subset \Sigma$, and pulling back sections along $\sigma$ gives an
isomorphism
\[
  \sigma^* \colon H^0\big(\pi^{-1}(U), L\big) \xrightarrow{\ \cong\ } H^0\big(\pi^{-1}(U), \sigma^*L\big),
\]
compatible with restrictions. Hence $\pi_*(\sigma^*L) \cong \pi_*L = E$.
Moreover, since $\sigma^*\pi^*K = \pi^*K$ and $\sigma^*\eta = -\eta$, for every local section $s$ of $L$ we have
$$\sigma^*s\otimes\eta = \sigma^*s\otimes(-\sigma^*\eta) = -\,\sigma^*(s\otimes\eta),$$
where $(\sigma^*s)(y) := s(\sigma(y))$. That is, under the isomorphism $\sigma^*: H^0(\pi^{-1}(U),L)\xrightarrow{\cong}H^0(\pi^{-1}(U),\sigma^*L)$, the map $s\mapsto s\otimes\eta$ on $\sigma^*L$ corresponds to $s\mapsto -\,s\otimes\eta$ on $L$, which pushes down to 
$-\Phi \colon \pi_*L \to \pi_*L \otimes K$. Therefore the line bundle
$\sigma^*L$ corresponds to the Higgs bundle $(E,-\Phi)$. Since the
correspondence of \cite{bnr} is bijective, $(E,\Phi) \cong (E,-\Phi)$ if and
only if $L \cong \sigma^*L$.
\end{proof}

\begin{corollary}\label{cor:identity-component}
Let $p \colon S_a \to C := S_a/\sigma$ be the quotient map. Then
$p^*\operatorname{Pic}(C)$ is contained in
$\operatorname{Pic}(S_a)^{\sigma^*} := \{L \in \operatorname{Pic}(S_a) : L \cong \sigma^*L\}$.
Moreover, $p^*\operatorname{Pic}^0(C)$ is the connected component of the
identity of the group $\operatorname{Pic}^0(S_a)^{\sigma^*}$.
\end{corollary}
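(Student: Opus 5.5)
The first claim is formal. For $M\in\operatorname{Pic}(C)$, the identity $p\circ\sigma=p$ gives $\sigma^*p^*M\cong(p\circ\sigma)^*M=p^*M$, so $p^*M\in\operatorname{Pic}(S_a)^{\sigma^*}$. Since $\deg p^*M=2\deg M$, this also shows $p^*\operatorname{Pic}^0(C)\subset\operatorname{Pic}^0(S_a)^{\sigma^*}$. The group $p^*\operatorname{Pic}^0(C)$ is connected, being the image of a connected variety under a morphism of group varieties. It is therefore contained in the identity component $B_0$ of $B:=\operatorname{Pic}^0(S_a)^{\sigma^*}$, which is a closed algebraic subgroup of $\operatorname{Pic}^0(S_a)$.

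To prove the reverse inclusion I plan to compare dimensions and then check that the image is closed. First I will compute the dimension of $B$ through its tangent space. The tangent space of $\operatorname{Pic}^0(S_a)$ is $H^1(S_a,\mathcal O)$, and $B$ is the fixed subgroup of $\sigma^*$, so $T_0B$ is the $(+1)$-eigenspace $H^1(S_a,\mathcal O)^{\sigma}$. Decompose $p_*\mathcal O_{S_a}=\mathcal O_C\oplus\mathcal L^{-1}$ into the $\pm1$ eigensheaves, where $\mathcal L^2=\mathcal O_C(D)$ and $D$ is the branch divisor. This gives $H^1(S_a,\mathcal O)^{\sigma}=H^1(C,\mathcal O_C)$, so $\dim B=g(C)$.

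Next, $p^*\colon\operatorname{Pic}^0(C)\to\operatorname{Pic}^0(S_a)$ has finite kernel. Indeed, if $p^*M\cong\mathcal O$, then $M^2\cong\operatorname{Nm}(p^*M)\cong\mathcal O$, so the kernel lies in $\operatorname{Pic}^0(C)[2]$. Alternatively, $dp^*$ is the injection $H^1(C,\mathcal O)\to H^1(S_a,\mathcal O)$ coming from the summand $\mathcal O_C\subset p_*\mathcal O_{S_a}$. Either way, $p^*\operatorname{Pic}^0(C)$ has dimension $g(C)$. Its image is closed because $\operatorname{Pic}^0(C)$ is complete. It is therefore a connected closed subgroup of $B_0$ of the same dimension. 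Since $B_0$ is irreducible, the two coincide.

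The main point needing care is the eigenspace identification $H^1(S_a,\mathcal O)^\sigma\cong H^1(C,\mathcal O_C)$. Because $p$ is finite, $H^1(S_a,\mathcal O)=H^1(C,p_*\mathcal O_{S_a})$, and $\sigma$ acts on $p_*\mathcal O_{S_a}$ with invariant part $\mathcal O_C$. The argument also uses that the scheme-theoretic fixed subgroup has tangent space equal to the invariants. This holds because in characteristic $0$ the fixed group scheme of a finite-order automorphism is smooth, so $\dim B_0=\dim T_0B$.

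If $\sigma$ had no fixed points, the same argument would still apply; only the description of $\mathcal L$ would change. Here the fixed points are the points with $\eta=0$, which lie over the zeros of $a_n$. Since $S_a$ is smooth, the fixed points of $\sigma$ are isolated, and $C$ is a smooth curve.
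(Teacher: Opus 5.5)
Your proposal is correct and follows essentially the same route as the paper: you show $p^*$ has finite kernel via the norm, identify the tangent space of the fixed subgroup with $H^1(S_a,\mathcal O)^{\sigma}\cong H^1(C,\mathcal O_C)$ to get dimension $g(C)$, and conclude because a closed connected subgroup of the same dimension is the identity component. Your extra justifications, the eigensheaf splitting $p_*\mathcal O_{S_a}=\mathcal O_C\oplus\mathcal L^{-1}$ and the smoothness of the fixed group scheme in characteristic $0$ (giving $\dim B_0=\dim T_0B$), fill in steps the paper asserts without proof.
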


\begin{proof}
The quotient $C$ is a smooth curve, being the quotient of a smooth curve by a
finite group. Since $p \circ \sigma = p$, we have
$\sigma^*p^*M \cong p^*M$ for every $M \in \operatorname{Pic}(C)$, which proves
the first assertion. 

The map $p^*$ has finite kernel on $\mathrm{Pic}^0(C)$, since $\mathrm{Nm}_{S_a/C}(p^*M) = M^{\otimes2}$ forces any $M$ in the kernel to be $2$-torsion; hence $p^*\mathrm{Pic}^0(C)$, being the image of a compact connected group under a holomorphic homomorphism, is a compact (hence closed) connected subgroup of $\mathrm{Pic}^0(S_a)^{\sigma^*}$ of dimension $g(C)$. On the other hand, the Lie algebra of $\mathrm{Pic}^0(S_a)^{\sigma^*}$ is $H^1(S_a,\mathcal O_{S_a})^\sigma\cong H^1(C,\mathcal O_C)$, of dimension $g(C)$. A closed connected subgroup of the same dimension is the identity component; hence $p^*\mathrm{Pic}^0(C)$ is the identity component.
\end{proof}

Now consider the case when $(E,\Phi)$ is a reversible Higgs bundle of odd rank
$n = 2m+1$. As above, all odd coefficients $a_i$ vanish, so the equation of the
corresponding spectral curve $S_a$ is
\[
  \eta\big(\eta^{2m} + a_2\eta^{2m-2} + \cdots + a_{2m-2}\eta^2 + a_{2m}\big) = 0 .
\]
Therefore $S_a = Z \cup S'$, where
\[
  Z : \eta = 0, \qquad S' : \eta^{2m} + a_2\eta^{2m-2} + \cdots + a_{2m-2}\eta^2 + a_{2m} = 0 .
\]
Here $Z \cong \Sigma$ is the zero section. The involution
$\sigma(x,\eta) = (x,-\eta)$ preserves both components and acts trivially on
$Z$. Note that $Z$ and $S'$ intersect at the points $(x,0)$ with
$a_{2m}(x) = 0$; if $a_{2m}$ has simple zeros, these intersections are
transversal, so $S_a$ has ordinary nodes there, and they are exactly the fixed
points of $\sigma$ on $S'$.

For $n = 1$, a Higgs bundle $(E,\phi)$ is reversible if and only if $\phi = 0$; we therefore assume $n = 2m+1$ with $m\geq1$.
\begin{theorem}\label{thm:odd-fibre}
Let $(E,\Phi)$ be a reversible $GL(n,\mathbb{C})$-Higgs bundle with
$n = 2m+1$ odd. Suppose that $h(E,\Phi) = a$, that $S'$ is smooth, and that
$a_{2m}$ has simple zeros $x_1, \dots, x_k$ (regarded as points of
$Z \cap S'$). Then the reversible Higgs bundles in $h^{-1}(a)$ whose spectral sheaf is locally free are in bijective correspondence with (isomorphism classes of) triples $(L,L',\{\phi_{x_i}\})$, where $L\in\mathrm{Pic}(Z)$, $L'\in\mathrm{Pic}(S')$ and $\phi_{x_i}: L_{x_i}\to L'_{x_i}$ are isomorphisms, such that
\[
  \big|\,2m\deg L - \deg L'\,\big| < 2m(2m+1)(g-1),
\] and there exists an isomorphism $\alpha: L'\to\sigma^*L'$ with $\alpha(x_i) = 1$ for all $i$. Here $(\sigma^*L')_{x_i}$ is identified with $L'_{x_i}$, since $\sigma(x_i) = x_i$.
\end{theorem}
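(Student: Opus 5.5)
The plan is to combine the BNR correspondence for the reducible nodal curve $S_a = Z\cup S'$ with the sign-reversal argument from the proof of Theorem~\ref{thm:even-fibre}. By \cite{bnr}, Higgs bundles in $h^{-1}(a)$ whose spectral sheaf is locally free correspond to line bundles $\mathcal L$ on $S_a$, with $E=\pi_*\mathcal L$ and $\Phi$ given by multiplication by $\eta$. Because $S_a$ has only ordinary nodes at $x_1,\dots,x_k$, a line bundle $\mathcal L$ on $S_a$ is the same as a triple $(L,L',\{\phi_{x_i}\})$. Here $L=\mathcal L|_Z$ and $L'=\mathcal L|_{S'}$, and the gluing isomorphisms come from the normalization sequence
\[
0\to\mathcal O_{S_a}^*\to\nu_*\mathcal O_Z^*\oplus\nu_*\mathcal O_{S'}^*\to\bigoplus_i\mathbb C^*_{x_i}\to 0 .
\]
Two triples give isomorphic $\mathcal L$ precisely when they differ by automorphisms $(\lambda,\lambda')\in\mathbb C^*\times\mathbb C^*$ of $L,L'$, which rescale every $\phi_{x_i}$ by $\lambda'/\lambda$. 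This gives the parametrization by isomorphism classes of triples.

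Next I would treat reversibility. As in Theorem~\ref{thm:even-fibre}, $\sigma^*\eta=-\eta$ on $S_a$ and $\pi\circ\sigma=\pi$, so $\sigma^*\mathcal L$ corresponds to $(E,-\Phi)$. Hence $(E,\Phi)$ is reversible if and only if $\mathcal L\cong\sigma^*\mathcal L$. Now $\sigma$ acts trivially on $Z$, preserves $S'$, and fixes each node. Therefore $\sigma^*\mathcal L$ is the triple $(L,\sigma^*L',\{\phi_{x_i}\})$, where $(\sigma^*L')_{x_i}=L'_{x_i}$. An isomorphism of triples is then a pair $(\beta,\alpha)$, with $\beta=\lambda\in\mathbb C^*$ an automorphism of $L$ and $\alpha\colon L'\to\sigma^*L'$, such that $\alpha(x_i)\phi_{x_i}=\phi_{x_i}\lambda$ for all $i$, i.e.\ $\alpha(x_i)=\lambda$. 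Replacing $\alpha$ by $\lambda^{-1}\alpha$ normalizes this to $\alpha(x_i)=1$ for all $i$, which is exactly the stated gluing condition. I would remark that it is stronger than $L'\cong\sigma^*L'$: a priori the values $\alpha(x_i)$ are only determined up to a common scalar, and $\alpha\sigma^*\alpha$ is a scalar, so each $\alpha(x_i)=\pm c$. The condition forces all the signs to agree.

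The last step, and the main obstacle, is polystability. Since $S_a$ is reduced, $\mathcal L$ has rank one on each component, and the $\Phi$-invariant subbundles of $E$ are $\pi_*$ of quotients of $\mathcal L$ supported on unions of components. Here these are $\pi_*L$ and $\pi_*(L'(-D))$, where $D=\sum x_i$, together with the corresponding subsheaves $\pi_*(L(-D))$ and $\pi_*L'$.

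I would compute degrees by Riemann--Hurwitz and Grothendieck--Riemann--Roch. First, $\deg\pi_*L=\deg L$, with rank $1$. Second, $\deg\pi_*L'=\deg L'-\tfrac12\deg R$, with rank $2m$. The ramification divisor satisfies $\deg R=2g(S')-2-2m(2g-2)$, and adjunction in $\operatorname{Tot}(K)$ gives $g(S')-1=(2m)^2(g-1)$. Finally, $k=\deg D=4m(g-1)$, and $\deg E=\deg L+\deg L'-k+(\text{correction})$.

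Stability would then require, for the invariant subsheaf $\pi_*(L(-D))$, the inequality $(\deg L-k)/1<\deg E/(2m+1)$, and, for $\pi_*L'$, the inequality $\mu(\pi_*L')<\mu(E)$. Linearizing both in $\deg L$ and $\deg L'$ should collapse them to the single two-sided bound $|2m\deg L-\deg L'|<2m(2m+1)(g-1)$. I expect bookkeeping the constants so that the two inequalities come out symmetric to be the delicate part.

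I would also check that strictly semistable or polystable split cases cannot occur under a strict inequality. This holds because such a case would force $E=\pi_*L\oplus\pi_*L'$, which is incompatible with the nonzero gluing isomorphisms $\phi_{x_i}$. So the strict inequality is exactly polystability, and combining the three steps gives the bijection.
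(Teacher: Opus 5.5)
Your treatment of the triples and of reversibility matches the paper's proof, including the identification of $\sigma^*\mathcal L$ with $(L,\sigma^*L',\{\phi_{x_i}\})$ and the normalization of $\alpha(x_i)=\lambda$ to $1$. The polystability step, however, contains a genuine error.

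The sheaf $\pi_*L'$ is not a $\Phi$-invariant subsheaf of $E$. It is the quotient in $0\to\pi_*(L(-D))\to E\to\pi_*L'\to 0$, which comes from restriction $\mathcal L\to L'$. Indeed, a $\Phi$-equivariant map $\pi_*L'\to E$ is an $\mathcal O_{S_a}$-linear map $L'\to\mathcal L$. Such a map lands in the subsheaf $L'(-D)$ of sections supported on $S'$, and so vanishes because $\deg D>0$.

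So $\mu(\pi_*L')<\mu(E)$ is not a stability condition. With $\deg\pi_*L'=\deg L'-2m(2m-1)(g-1)$ and $\deg E=\deg L+\deg L'-2m(2m+1)(g-1)$, it reads $2m\deg L-\deg L'>2m(2m+1)(g-1)$. This is incompatible with your first inequality $2m\deg L-\deg L'<2m(2m+1)(g-1)$, so your two conditions together have no solutions. No bookkeeping will make them collapse to the symmetric bound. Both of your tests come from the same short exact sequence, and the second has the wrong direction.

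What you need is the other sequence $0\to\pi_*(L'(-D))\to E\to\pi_*L\to 0$. It is given by restriction $\mathcal L\to L$ to $Z$, and its kernel $L'(-D)$ is the second saturated invariant subsheaf, with $D$ now the node divisor on $S'$. The sheaf $\pi_*(L'(-D))$ has rank $2m$ and degree $\deg L'-k-2m(2m-1)(g-1)$. The condition $\mu(\pi_*(L'(-D)))<\mu(E)$, equivalently $\mu(\pi_*L)>\mu(E)$, gives $\deg L'-2m\deg L<2m(2m+1)(g-1)$. The extra $-k$ is exactly what produces the symmetry. Combined with your first inequality, this yields $|2m\deg L-\deg L'|<2m(2m+1)(g-1)$, as in the paper.

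Two smaller points remain. You should record, as the paper does, that $S'$ is connected, being a smooth spectral curve. This is what guarantees that $\pi_*(L(-D))$ and $\pi_*(L'(-D))$ are the only proper nonzero saturated invariant subsheaves. You should also fix the correction term: $\deg E=\deg L+\deg L'-2m(2m+1)(g-1)$.
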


\begin{proof}
A line bundle $L \in \operatorname{Pic}(Z)$, a line bundle
$L' \in \operatorname{Pic}(S')$ and isomorphisms
$\phi_{x_i} \colon L_{x_i} \to L'_{x_i}$ at the nodes define a locally free
sheaf $\mathcal{F}$ of rank $1$ on $S_a$: for an open set $V \subset S_a$,
\[
  \mathcal{F}(V) = \Big\{ (\xi,\tau) \in H^0(V \cap Z, L) \times H^0(V \cap S', L') :
  \tau(x_i) = \phi_{x_i}\big(\xi(x_i)\big) \text{ for every } x_i \in V \Big\}.
\]
In particular, $\mathcal{F}(V) = H^0(V,L')$ if $V \subset S'$,
$\mathcal{F}(V) = H^0(V,L)$ if $V \subset Z$, and
$\mathcal{F}(V) = H^0(V \cap Z, L) \times H^0(V \cap S', L')$ if $V$ contains
none of the $x_i$. Every locally free
sheaf  of rank $1$ on the nodal curve $S_a$ arises in this
way, and two triples give isomorphic line bundles if and only if they are
isomorphic as triples. By \cite{bnr} (see also \cite{Sc98} for reduced
spectral curves), $E := \pi_*\mathcal{F}$, where $\pi \colon S_a \to \Sigma$,
together with the Higgs field $\Phi \colon \pi_*\mathcal{F} \to
\pi_*\mathcal{F} \otimes K$ induced by multiplication by $\eta$,
\[
  \mathcal{F}\big(\pi^{-1}(U)\big) \xrightarrow{\ \eta\ } \mathcal{F}\big(\pi^{-1}(U)\big) \otimes H^0\big(\pi^{-1}(U), \pi^*K\big),
\]
 is a Higgs bundle with characteristic coefficients $a$, and this gives a bijection
between such Higgs bundles (not necessarily polystable) with locally free spectral
sheaf and line bundles on $S_a$.

We first  determine which of these lie in the Hitchin fibre $h^{-1}(a)$, that means,  are
polystable. Since $\Phi$ satisfies its characteristic polynomial, $E$ is a module
over $\pi_*\mathcal{O}_{S_a}$, and the $\Phi$-invariant subbundles of $E$ are exactly
the sheaves $\pi_*\mathcal{F}'$ for subsheaves $\mathcal{F}'\subset\mathcal{F}$ with
torsion-free quotient. Since $\mathcal{F}$ has rank one on each component and $S'$ is
connected (being a smooth spectral curve), the only proper nonzero such subsheaves are
$L(-D)$, supported on $Z$, and $L'(-D')$, supported on $S'$, where $D$ and $D'$ denote
the node divisors on $Z$ and $S'$ and $k=\deg D=4m(g-1)$. The corresponding
$\Phi$-invariant subbundles of $E$ are $L(-D)$ (via $Z\cong\Sigma$) and
$\pi'_*\big(L'(-D')\big)$, where $\pi'=\pi|_{S'}$; they have ranks $1$ and $2m$ and
degrees $\deg L-k$ and $\deg L'-k-2m(2m-1)(g-1)$, while
$\deg E=\deg L+\deg L'-(2m+1)2m(g-1)$. The two slope inequalities are equivalent to
$\big|2m\deg L-\deg L'\big|<2m(2m+1)(g-1)$, so this is exactly the condition for $E$
to be stable. Moreover, $E$ cannot be strictly polystable: a splitting of $E$ into
$\Phi$-invariant summands would split $\mathcal{F}$ into sheaves supported on $Z$ and
on $S'$, which is impossible for a line bundle on the connected curve $S_a$. Hence the
Higgs bundles in $h^{-1}(a)$ with locally free spectral sheaf correspond exactly to
the triples satisfying this inequality.

Recall that $\sigma \colon S_a \to S_a$ is defined by $\sigma(x,\eta) = (x,-\eta)$.
For any open set $U \subset \Sigma$, since $\sigma(\pi^{-1}(U)) = \pi^{-1}(U)$,
\[
  H^0\big(U, \pi_*\sigma^*\mathcal{F}\big) = H^0\big(\pi^{-1}(U), \sigma^*\mathcal{F}\big)
  \cong H^0\big(\sigma(\pi^{-1}(U)), \mathcal{F}\big) = H^0\big(\pi^{-1}(U), \mathcal{F}\big).
\]
Therefore $\pi_*(\sigma^*\mathcal{F}) \cong \pi_*\mathcal{F} = E$.

We now describe $\sigma^*\mathcal{F}$ in terms of gluing data. Since $\sigma$
is an involution, $(\sigma^*\mathcal{F})(V) \cong \mathcal{F}(\sigma(V))$. Since
$\sigma|_Z = \mathrm{id}$ and $\sigma(x_i) = x_i$ for all $i$, we obtain, for
every open set $V \subset S_a$,
\[
  (\sigma^*\mathcal{F})(V) \cong \Big\{ (\xi,\tau) \in H^0(V \cap Z, L) \times H^0(V \cap S', \sigma^*L') :
  \tau(x_i) = \phi_{x_i}\big(\xi(x_i)\big) \text{ for every } x_i \in V \Big\}.
\]
Therefore the sheaf $\sigma^*\mathcal{F}$ corresponds to the triple
\[
  \Big( L,\ \sigma^*L',\ \big\{\phi_{x_i} \colon L_{x_i} \to (\sigma^*L')_{x_i} = L'_{x_i}\big\} \Big).
\]

As in the proof of Theorem~\ref{thm:even-fibre}, since $\sigma^*\eta = -\eta$,
multiplication by $\eta$ on $\sigma^*\mathcal{F}$ corresponds to
multiplication by $-\eta$ on $\mathcal{F}$, and its direct image is $-\Phi$.
Hence $\sigma^*\mathcal{F}$ corresponds to the Higgs bundle $(E,-\Phi)$. In
particular, by the bijectivity of the correspondence,
\[
  (E,\Phi) \cong (E,-\Phi) \iff \mathcal{F} \cong \sigma^*\mathcal{F}.
\]

It remains to express the condition $\mathcal{F} \cong \sigma^*\mathcal{F}$
in terms of the triple. An isomorphism of triples
$(L, L', \{\phi_{x_i}\}) \to (L, \sigma^*L', \{\phi_{x_i}\})$ consists of an
automorphism of $L$, which is a scalar $\lambda \in \mathbb{C}^*$ since
$Z \cong \Sigma$ is compact and connected, and an isomorphism
$\alpha \colon L' \to \sigma^*L'$, such that
$\alpha(x_i) \circ \phi_{x_i} = \phi_{x_i} \circ \lambda$, which implies 
$\alpha(x_i) = \lambda$ for all $i$. 
Replacing $\alpha$ by $\lambda^{-1}\alpha$, we see that $\mathcal F\cong\sigma^*\mathcal F$ if and only if there is an isomorphism $\alpha: L'\to\sigma^*L'$ with $\alpha(x_i) = 1$ for all $i$.
\end{proof}

\section{Langlands duality}\label{dual}

In this section we consider reversible $SL(2n,\mathbb{C})$-Higgs bundles and
their Langlands dual counterpart on the $PGL(2n,\mathbb{C})$ side. We first
state the main result, then establish its ingredients in a sequence of
propositions (\S\S\ref{subsec:notation}--\ref{subsec:psp}), and finally prove
the theorem in \S\ref{subsec:proof}.

\begin{theorem}\label{thm:main}
The $(B,A,A)$-brane $\mathcal{M}^{\mathrm{rev}}_{\mathrm{reg}}(SL(2n,\mathbb{C}))$ of
reversible $SL(2n,\mathbb{C})$-Higgs bundles with smooth spectral curves gives
rise to a $(B,B,B)$-support in the moduli space of its Langlands dual
$PGL(2n,\mathbb{C})$-Higgs bundles. More precisely, under the relative
Fourier--Mukai transform $\Psi$ of \S\ref{subsec:FM}, the structure sheaf
$\mathcal{E}_{\mathrm{global}}$ of
$\mathcal{M}^{\mathrm{rev}}_{\mathrm{reg}}(SL(2n,\mathbb{C}))$ is mapped to an object
$\Psi(\mathcal{E}_{\mathrm{global}})$ of
$D^b\big(\mathrm{Coh}(\mathcal{M}^0_{\mathrm{reg}}(PGL(2n,\mathbb{C})))\big)$ whose
support is of type $(B,B,B)$. Moreover, this support is contained in
the moduli space of $PSp(2n,\mathbb{C})$-Higgs bundles: this is the image of
$\mathcal{M}^0_{\mathrm{reg}}(PSp(2n,\mathbb{C}))$ under extension of structure group.
\end{theorem}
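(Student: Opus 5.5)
The plan follows the outline in the introduction: a fibrewise computation, a base-change argument to make the global transform restrict to fibres, and an identification with the $PSp(2n,\mathbb{C})$-locus.

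\emph{Fibrewise structure and transform.} Fix $a$ in the even regular locus $T=\mathcal{A}^{\mathrm{even}}_{\mathrm{reg}}$, where all odd coefficients vanish and $S_a$ is smooth. Translating by the Hitchin section built from the fixed theta characteristic, the $SL(2n,\mathbb{C})$-fibre becomes $A=\operatorname{Prym}(S_a/\Sigma)$. By Theorem~\ref{thm:even-fibre}, the reversible part of the fibre is the fixed subgroup $B=A^{\sigma^*}$. To use this, one must check that the Hitchin-section line bundle $\pi^*K^{(2n-1)/2}$ is itself $\sigma$-invariant. This holds because it is pulled back from $\Sigma$, so translating by it does not change the reversibility condition.

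By Corollary~\ref{cor:identity-component}, intersected with the Prym, the identity component of $B$ is $B_0=p^*\operatorname{Prym}(C/\Sigma)$, and $B$ is a finite union of translates $b_j+B_0$. The Fourier--Mukai transform of $\mathcal{O}_{b_j+B_0}$ is $P_{b_j}\otimes\widehat{\mathcal{O}_{B_0}}$. By Mukai, $\widehat{\mathcal{O}_{B_0}}$ is a shifted sheaf supported exactly on the annihilator $B_0^\perp=\ker(A^\vee\to B_0^\vee)$. So the support of the transform of $\mathcal{O}_B$ is $B_0^\perp$; twisting by line bundles does not change supports. Dualizing $p^*$ via the norm map $\mathrm{Nm}_{S_a/C}$ identifies $B_0^\perp$ with $\operatorname{Prym}(S_a/C)/\pi_a^*\operatorname{Jac}(\Sigma)[2]$, the fibre of the degree-zero $PSp(2n,\mathbb{C})$ system. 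This is Proposition~\ref{prop:fibrewise}.

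\emph{Globalization.} Let $\iota\colon X_T\to X$ and $\hat\iota\colon \hat X_T\to\hat X$ be the restrictions over $T$. The base change $T\to\mathcal{A}_{\mathrm{reg}}$ applied to the flat (smooth) morphism $X\times_{\mathcal{A}}\hat X\to\hat X$ gives $\Psi(\iota_*\mathcal{G})\cong\hat\iota_*\Psi_T(\mathcal{G})$. Taking $\mathcal{G}$ to be the structure sheaf of the reversible locus inside $X_T$ shows that $\Psi(\mathcal{E}_{\mathrm{global}})$ is supported over $T$, and hence empty over the rest of $\mathcal{A}_{\mathrm{reg}}$.

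Over $T$, I would show that the reversible locus is flat over $T$. It is the fixed locus of the fibrewise involution on an abelian scheme, so it is smooth over $T$; in particular its component group is étale locally constant. Derived base change to a point $a$ then computes $L\hat j_a^*\Psi_T(\mathcal{G})\cong \Phi_a(\mathcal{O}_{B})$. Nakayama's lemma for complexes then gives that a point lies in the support if and only if it lies in the support of the fibrewise transform. This is Proposition~\ref{prop:support}.

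\emph{Identification and type.} Extension of structure group $e$ along $PSp(2n)\subset PGL(2n)$ is a morphism over the common base, with $h_{PSp}$ mapping to the even locus. I would show $e$ is injective on regular fibres, landing exactly in $B_0^\perp$ via spectral data: a $PSp$ Higgs bundle has invariant spectral data, giving the Prym of $S_a/C$ modulo $2$-torsion. Properness of $e$ over $\mathcal{A}_{\mathrm{reg}}$ together with injectivity on points and tangent vectors then makes $e$ a closed embedding (Proposition~\ref{prop:psp}).

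For the $(B,B,B)$ claim, extension of structure group preserves the Hitchin equations, since the compact real forms are compatible. It therefore commutes with the non-abelian Hodge correspondence and is holomorphic for $J_1$, $J_2$ and $J_3$, and so its image is a complex submanifold for all three.

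The main obstacle I expect is the flatness/base-change step. One must ensure that $B$ varies flatly over $T$ with a constant number of components, and that Mukai's support statement survives derived restriction without extra support from higher Tor terms. If smoothness of the fixed scheme is hard to prove directly, I would first try the fact that on each fibre $\sigma^*$ acts on an abelian variety, so the fixed subgroup scheme is smooth in characteristic zero.
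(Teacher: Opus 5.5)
Your proposal is correct and follows the paper's proof essentially step for step. The shared steps are: $\sigma$-invariance of $L_0=\pi^*K^{(2n-1)/2}$ (Lemma~\ref{lem:intertwine}); Mukai's support computation for the cosets of $B_0$ and the norm-map description of $B_0^\perp$ (Proposition~\ref{prop:fibrewise}); flat base change to $T$ followed by fibrewise restriction and Nakayama (Lemma~\ref{lem:evenbase}, Proposition~\ref{prop:support}); the closed embedding $e$ (Proposition~\ref{prop:psp}); and holomorphicity of $e$ for $J_1,J_2,J_3$. One small correction: the flat map actually used in Lemma~\ref{lem:evenbase}(i) is $p_1\colon W\to X$, not $W\to\hat X$, though both projections are flat.

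The only genuine variation is in Proposition~\ref{prop:revsheaf}. You get flatness from smoothness over $T$ of the fixed locus of the fibre-preserving involution. Averaging shows that $dh$ maps $T_y\mathcal{M}^{\mathrm{rev}}_{\mathrm{reg}}=(T_yY)^{+}$ onto $T_{h(y)}T$. This also makes the scheme-theoretic fibre reduced, so $\varphi_{a*}y_a^*\mathcal{G}\cong i_*\mathcal{O}_B$ follows directly. The paper instead uses miracle flatness, and then needs a separate Cartan-linearization argument ($I+J=I_Z$) to get that reducedness.
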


% -------------------------------------------------------------------
\subsection{Notation and the regular locus}\label{subsec:notation}

Since $\mathfrak{sl}(2n,\mathbb{C}) = \mathfrak{pgl}(2n,\mathbb{C})$, the
Hitchin bases of $SL(2n,\mathbb{C})$ and $PGL(2n,\mathbb{C})$ coincide; we
denote both by
\[
  \mathcal{A} = \bigoplus_{i=2}^{2n} H^0\big(\Sigma, K^i\big),
\]
with coordinates $a = (a_2, \dots, a_{2n})$ given by the coefficients of the
characteristic polynomial, as in \eqref{eq:2.3}. The \emph{regular locus}
$\mathcal{A}_{\mathrm{reg}} \subset \mathcal{A}$ is the set of points whose spectral curve
$S_a$ is smooth. It is the complement of the discriminant locus, hence open
and dense in $\mathcal{A}$, and it is the same for both groups, cf.\
\cite{dp}, \cite[Theorem~2.1]{bgh}. Indeed, the spectral curve
\[
  S_a : \ \eta^{2n} + \sum_{k=2}^{2n} a_k\, \eta^{2n-k} = 0
\]
depends only on $a$, so both Hitchin systems have the same \emph{universal
spectral curve}
\[
  \mathcal{S} = \Big\{ (x,\eta,a) \in \operatorname{Tot}(K) \times \mathcal{A}_{\mathrm{reg}} : \eta^{2n} + \sum_{k=2}^{2n} a_k(x)\, \eta^{2n-k} = 0 \Big\} \longrightarrow \mathcal{A}_{\mathrm{reg}},
\]
with projection $\pi \colon \mathcal{S} \to \Sigma$, $(x,\eta,a) \mapsto x$, whose
restriction to $S_a$ is $\pi_a \colon S_a \to \Sigma$. Every Higgs bundle with
smooth spectral curve is stable, since a destabilizing subobject would force
$S_a$ to be reducible (cf.\ \cite[\S 4]{bgh}). Moreover, every point of the regular locus (Higgs bundle with smooth spectral curve)  is a smooth point of the moduli space of $SL(2n,\mathbb C)$,  $PGL(2n,\mathbb C)$ or $PSp(2n,\mathbb C)$-Higgs bundles, respectively. Indeed, a stable Higgs bundle whose automorphism group is the centre of the structure group is a smooth point of the moduli space, and in our case the automorphism groups are as follows: for $SL(2n,\mathbb C)$, an automorphism of $(\pi_*L,\Phi)$ is given by an element of $H^0(S_a,\mathcal O^*) = \mathbb C^*$ of determinant one, hence lies in the centre $\mu_{2n}$; for $PGL(2n,\mathbb C)$ and $PSp(2n,\mathbb C)$, an automorphism would give $L\otimes\pi_a^*M\cong L$ for some $M\in\mathrm{Pic}^0(\Sigma)$, forcing $M\cong\mathcal O$ by injectivity of $\pi_a^*$ (as in the proof of Proposition~\ref{prop:fibrewise}(i)), so the automorphism group is trivial.

Let
\[
  \mathcal{A}^{\mathrm{even}} = \{ a \in \mathcal{A} : a_k = 0 \text{ for all odd } k \} = \bigoplus_{i=1}^n H^0\big(\Sigma, K^{2i}\big),
  \qquad
  \mathcal{A}^{\mathrm{even}}_{\mathrm{reg}} := \mathcal{A}_{\mathrm{reg}} \cap \mathcal{A}^{\mathrm{even}}.
\]
Thus $\mathcal{A}^{\mathrm{even}}_{\mathrm{reg}}$ is open in the linear subspace
$\mathcal{A}^{\mathrm{even}}$ and closed in $\mathcal{A}_{\mathrm{reg}}$. By
\S\ref{sec:hitchin}, every reversible Higgs bundle lies over
$\mathcal{A}^{\mathrm{even}}$. For $a \in \mathcal{A}^{\mathrm{even}}_{\mathrm{reg}}$, the curve
$S_a$ carries the involution $\sigma(x,\eta) = (x,-\eta)$, and we have maps of
smooth curves
\[
  S_a \xrightarrow{\ p\ } C := S_a/\sigma \xrightarrow{\ \pi_C\ } \Sigma, \qquad \pi_a = \pi_C \circ p .
\]
Note that $a_{2n} \not\equiv 0$, since otherwise the equation of $S_a$ would be
divisible by $\eta^2$, contradicting smoothness. Moreover, the zeros of
$a_{2n}$ are simple: at a zero $x_0$ of $a_{2n}$, the point $(x_0,0)$ lies on
$S_a$, the $\eta$-derivative of the equation vanishes there, and the
derivative in the base direction is $a_{2n}'(x_0)$. Hence $\sigma$ has exactly
$\deg K^{2n} = 4n(g-1)$ fixed points, and $p$ is a double cover ramified at
these points.

As explained in \S\ref{revbrane},  $\mathcal{M}(SL(2n,\mathbb{C}))$ is connected since
$\pi_1(SL(2n,\mathbb{C})) = 0$, whereas \\$\mathcal{M}(PGL(2n,\mathbb{C}))$ has $2n$
connected components, since $\pi_1(PGL(2n,\mathbb{C})) = \mathbb{Z}/2n$. We write
\[
  X := \mathcal{M}_{\mathrm{reg}}(SL(2n,\mathbb{C})) = h^{-1}(\mathcal{A}_{\mathrm{reg}}), \:\:
  \hat X := \mathcal{M}^0_{\mathrm{reg}}(PGL(2n,\mathbb{C})) = \hat h^{-1}(\mathcal{A}_{\mathrm{reg}}) \cap \mathcal{M}^0(PGL(2n,\mathbb{C})),
\]
where $h$, $\hat h$ are the two Hitchin maps and $\mathcal{M}^0$ denotes the
component corresponding to $c = 0$. Finally,
$\mathcal{M}^{\mathrm{rev}}_{\mathrm{reg}} := \mathcal{M}^{\mathrm{rev}}(SL(2n,\mathbb{C})) \cap X$. By
Theorem~\ref{thm:BAA}, $\mathcal{M}^{\mathrm{rev}}_{\mathrm{reg}}$ is a $(B,A,A)$-brane, being
an open subset of $\mathcal{M}^{\mathrm{rev}}_*(SL(2n,\mathbb{C}))$.

% -------------------------------------------------------------------
\subsection{The fibrewise computation}\label{subsec:fibrewise}

Fix $a \in \mathcal{A}^{\mathrm{even}}_{\mathrm{reg}}$ and consider the abelian variety
\[
  A = \operatorname{Prym}(S_a/\Sigma) = \ker\big( \operatorname{Nm}_{S_a/\Sigma} \colon \operatorname{Pic}^0(S_a) \to \operatorname{Pic}^0(\Sigma) \big) \subset \operatorname{Pic}^0(S_a),
\]
which is biholomorphic to the regular fibre $h^{-1}(a)$ (see
\S\ref{subsec:FM}). The involution $\sigma$ acts on $A$ by the group
automorphism $L \mapsto \sigma^*L$, and we set
\[
  B := \{ L \in \operatorname{Prym}(S_a/\Sigma) : L \cong \sigma^*L \} \subset A,
  \qquad
  B_0 := p^* \operatorname{Prym}(C/\Sigma),
\]
where $\operatorname{Prym}(C/\Sigma) = \ker\big(\operatorname{Nm}_{C/\Sigma} \colon \operatorname{Pic}^0(C) \to \operatorname{Pic}^0(\Sigma)\big) \subset \operatorname{Pic}^0(C)$.
Let $\mathcal{F} \colon D^b(\mathrm{Coh}(A)) \to D^b(\mathrm{Coh}(A^\vee))$ denote
the Fourier--Mukai transform with kernel the normalized Poincar\'e line bundle
$\mathcal{L}_a$ on $A \times A^\vee$.

\begin{proposition}\label{prop:fibrewise}
With the notation above, the following hold.
\begin{enumerate}
  \item[(i)] $B_0 \subset A$, the map $p^* \colon \operatorname{Prym}(C/\Sigma) \to B_0$ is
    an isomorphism, and $B_0$ is the connected component of the identity of the
    algebraic group $B$. Consequently $B = B_0 \sqcup B_{L_1} \sqcup \cdots \sqcup B_{L_r}$
    is a finite disjoint union of cosets $B_{L_j} := L_j \otimes B_0$.
  \item[(ii)] The dual of the inclusion $B_0 \hookrightarrow A$ is the quotient map
    \[
      A^\vee = \frac{\operatorname{Pic}^0(S_a)}{\pi_a^* \operatorname{Pic}^0(\Sigma)} \twoheadrightarrow B_0^\vee = \frac{\operatorname{Pic}^0(C)}{\pi_C^* \operatorname{Pic}^0(\Sigma)}
    \]
    induced by $\operatorname{Nm}_{S_a/C}$.
  \item[(iii)] The kernel $B_0^\perp := \ker(A^\vee \twoheadrightarrow B_0^\vee)$ is
    \[
      B_0^\perp \cong \frac{\operatorname{Prym}(S_a/C)}{\operatorname{Prym}(S_a/C) \cap \pi_a^* \operatorname{Pic}^0(\Sigma)},
    \]
    the image of $\operatorname{Prym}(S_a/C)$ in $A^\vee$. It is a connected abelian
    subvariety of $A^\vee$.
  \item[(iv)] For the sheaf
    $\mathcal{E} := i_*\mathcal{O}_{B_0} \oplus i_*\mathcal{O}_{B_{L_1}} \oplus \cdots \oplus i_*\mathcal{O}_{B_{L_r}} = i_*\mathcal{O}_B$ on $A$,
    \[
      \operatorname{Supp} \mathcal{F}(\mathcal{E}) = B_0^\perp .
    \]
\end{enumerate}
\end{proposition}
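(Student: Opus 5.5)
The plan is to prove the four parts in order. Parts (i)--(iii) are duality statements about the three covers $S_a \xrightarrow{p} C \xrightarrow{\pi_C} \Sigma$; part (iv) is a cohomology-and-base-change computation in the spirit of Mukai \cite{Mu81}. Throughout I use $\operatorname{Nm}_{S_a/\Sigma} = \operatorname{Nm}_{C/\Sigma}\circ\operatorname{Nm}_{S_a/C}$, the identity $\operatorname{Nm}_{S_a/C}(p^*M) = M^{\otimes 2}$, and the fact that norm and pullback are dual to each other under the principal polarizations of the Jacobians.

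\emph{Part (i).} For $M \in \operatorname{Prym}(C/\Sigma)$ we have $\operatorname{Nm}_{S_a/\Sigma}(p^*M) = \operatorname{Nm}_{C/\Sigma}(M^2) = \mathcal{O}$, so $B_0 \subset A$. Also $\sigma^*p^*M \cong p^*M$, so $B_0 \subset B$.

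For injectivity of $p^*$: $p$ is a double cover ramified at $4n(g-1) > 0$ points, and the standard argument shows $p^*$ is injective on $\operatorname{Pic}^0(C)$. Indeed, a nontrivial kernel would yield an unramified double cover of $C$ through which $p$ factors.

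To identify the identity component, I compare tangent spaces. The Lie algebra of $B$ is the $\sigma$-invariant part of $\ker\big(d\operatorname{Nm}\colon H^1(S_a,\mathcal{O}) \to H^1(\Sigma,\mathcal{O})\big)$. This is $\ker\big(H^1(C,\mathcal{O}) \to H^1(\Sigma,\mathcal{O})\big)$, of dimension $g(C) - g = \dim \operatorname{Prym}(C/\Sigma)$.

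I also need $\operatorname{Prym}(C/\Sigma)$ to be connected. I would deduce this from injectivity of $\pi_C^*$ on $\operatorname{Pic}^0(\Sigma)$: the kernel of the norm of a cover is connected exactly when the pullback is injective. Injectivity of $\pi_C^*$ follows from that of $\pi_a^* = p^*\pi_C^*$, which is the standard fact for smooth spectral curves already used in \S\ref{subsec:notation}.

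Granting connectedness, $B_0$ is a compact connected subgroup of $B$ of full dimension, hence the identity component, exactly as in Corollary~\ref{cor:identity-component}. Since $B$ is a closed subgroup of the abelian variety $A$, it has finitely many components, and these are cosets of $B_0$.

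\emph{Parts (ii) and (iii).} Dualizing the exact sequence $0 \to A \to \operatorname{Pic}^0(S_a) \xrightarrow{\operatorname{Nm}} \operatorname{Pic}^0(\Sigma)$ turns $\operatorname{Nm}$ into $\pi_a^*$. This gives $A^\vee = \operatorname{Pic}^0(S_a)/\pi_a^*\operatorname{Pic}^0(\Sigma)$, and similarly $B_0^\vee = \operatorname{Pic}^0(C)/\pi_C^*\operatorname{Pic}^0(\Sigma)$. The dual of $p^*$ is $\operatorname{Nm}_{S_a/C}$, which maps $\pi_a^*\operatorname{Pic}^0(\Sigma)$ into $\pi_C^*\operatorname{Pic}^0(\Sigma)$; this gives (ii).

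For (iii), take $L$ with $\operatorname{Nm}_{S_a/C}L = \pi_C^*N$. Choose $N'$ with $N'^2 = N$. Then $L \otimes \pi_a^*N'^{-1}$ lies in $\operatorname{Prym}(S_a/C)$ and has the same class in $A^\vee$. Conversely, $\operatorname{Prym}(S_a/C)$ clearly maps into the kernel. Hence $B_0^\perp$ is the image of $\operatorname{Prym}(S_a/C)$, with the kernel of this map as stated. Connectedness follows because the Prym of a ramified double cover is connected (Mumford), and the image of an abelian variety is an abelian subvariety.

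\emph{Part (iv).} I would compute $\mathcal{F}(i_*\mathcal{O}_{B_0}) = Rq_*\big(\mathcal{L}_a|_{B_0\times A^\vee}\big)$, where $q$ is the projection to $A^\vee$. The restriction to $B_0\times\{\xi\}$ is the line bundle $\xi|_{B_0} \in B_0^\vee$, whose cohomology vanishes in every degree unless $\xi|_{B_0}$ is trivial, that is, unless $\xi \in B_0^\perp$ by (ii). Off $B_0^\perp$, descending induction with cohomology and base change (starting from the top degree) gives vanishing of all $R^jq_*$. At points of $B_0^\perp$ the top cohomology $H^{\dim B_0}(B_0,\mathcal{O})$ is nonzero, and the same top-degree base change gives nonvanishing. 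So $\operatorname{Supp} = B_0^\perp$ exactly; this matches Mukai's formula, which gives a shifted line bundle on $B_0^\perp$.

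For the cosets, $\mathcal{O}_{B_{L_j}} = t_{L_j*}\mathcal{O}_{B_0}$. The Fourier--Mukai transform converts translation by $L_j$ into tensoring with the line bundle $\mathcal{L}_a|_{\{L_j\}\times A^\vee}$, which leaves supports unchanged. A direct sum has support equal to the union of supports, so $\operatorname{Supp}\mathcal{F}(\mathcal{E}) = B_0^\perp$.

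I expect the main obstacle to be the connectedness issues in (i): that $\operatorname{Prym}(C/\Sigma)$ is connected and that $B_0$ is the full identity component rather than an isogenous subgroup. These hinge on injectivity of $\pi_C^*$ and of $p^*$. The base-change argument in (iv) is routine once (ii) is in place.
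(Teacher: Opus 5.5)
Your proposal is correct and follows the paper's proof essentially step for step: the norm identities and the square-root trick for (i)--(iii), connectedness of the Pryms via injectivity of the relevant pullbacks, and Mukai's fibrewise vanishing plus ``translation becomes tensoring'' for (iv). The differences are only in how individual steps are justified: you get the identity component of $B$ by a direct Lie-algebra dimension count rather than through Corollary~\ref{cor:identity-component}, and you deduce injectivity of $\pi_C^*$ from that of $\pi_a^*$ rather than from $\pi_{C*}\mathcal{O}_C\cong\bigoplus_{i=0}^{n-1}K^{-2i}$; this is fine provided injectivity of $\pi_a^*$ is proved directly from $\pi_{a*}\mathcal{O}_{S_a}\cong\bigoplus_{i=0}^{2n-1}K^{-i}$, since the paper's remark in \S\ref{subsec:notation} points back to this very proof. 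You also spell out the cohomology-and-base-change step in (iv) that the paper leaves to \cite{Mu81}.
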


\begin{proof}
(i) Let $L \in \operatorname{Prym}(C/\Sigma)$. Then
\[
  \operatorname{Nm}_{S_a/\Sigma}(p^*L) = \operatorname{Nm}_{C/\Sigma}\big(\operatorname{Nm}_{S_a/C}(p^*L)\big).
\]
Since $p$ is a double cover, $\operatorname{Nm}_{S_a/C}(p^*L) = L^{\otimes 2}$.
Substituting this gives
\[
  \operatorname{Nm}_{C/\Sigma}\big(L^{\otimes 2}\big) = \operatorname{Nm}_{C/\Sigma}(L)^{\otimes 2} = \mathcal{O}_\Sigma^{\otimes 2} = \mathcal{O}_\Sigma .
\]
This proves that $p^*\operatorname{Prym}(C/\Sigma) \subset A$. Moreover, since
$p \circ \sigma = p$, we have $\sigma^*p^*L \cong p^*L$, so $B_0 \subset B$.
Since $p$ is a ramified double cover, $p^* \colon \operatorname{Pic}^0(C) \to
\operatorname{Pic}^0(S_a)$ is injective \cite{Mum71}; therefore
$p^*\operatorname{Prym}(C/\Sigma) \cong \operatorname{Prym}(C/\Sigma)$.

The group $\operatorname{Prym}(C/\Sigma)$ is connected: if
$\pi_C^*M \cong \mathcal{O}_C$ for some $M \in \operatorname{Pic}^0(\Sigma)$, then
$M \otimes \pi_{C*}\mathcal{O}_C = \bigoplus_{i=0}^{n-1} M \otimes K^{-2i}$ has a
nonzero section, which forces $M \cong \mathcal{O}_\Sigma$; thus $\pi_C^*$ is
injective on $\operatorname{Pic}^0(\Sigma)$, and dually $\ker\operatorname{Nm}_{C/\Sigma}$
is connected. Hence $B_0$ is a connected subgroup of $B$ of dimension
$g(C) - g$.

Conversely, by Corollary~\ref{cor:identity-component}, the identity component
of $B$ is contained in $p^*\operatorname{Pic}^0(C) \cap A$. For $M \in
\operatorname{Pic}^0(C)$, the computation above shows that $p^*M \in A$ if and
only if $\operatorname{Nm}_{C/\Sigma}(M)^{\otimes 2} \cong \mathcal{O}_\Sigma$; the
identity component of this group is $\operatorname{Prym}(C/\Sigma)$. Hence $B_0$
is the identity component of $B$. Since $B$ is a closed subgroup of the compact
group $A$, it has finitely many connected components, and they are the cosets
of $B_0$.

(ii) The dual of the pullback map $p^* \colon \operatorname{Pic}^0(C) \to
\operatorname{Pic}^0(S_a)$ is the norm map
$\operatorname{Nm}_{S_a/C} \colon \operatorname{Pic}^0(S_a) \to \operatorname{Pic}^0(C)$
\cite{BL04}, and the duals of $A$ and $B_0 \cong \operatorname{Prym}(C/\Sigma)$ are
the quotients displayed in (ii). We show that $\operatorname{Nm}_{S_a/C}$ induces
a well-defined surjection between these quotients. Since the norm map from the
Jacobian of the cover to the Jacobian of the base is surjective, we have
\[
  \operatorname{Nm}_{S_a/C}\big(\operatorname{Pic}^0(S_a)\big) = \operatorname{Pic}^0(C).
\]
Now we evaluate the image of the subgroup $\pi_a^*\operatorname{Pic}^0(\Sigma)$
under $\operatorname{Nm}_{S_a/C}$. Let $L \in \operatorname{Pic}^0(\Sigma)$. Then
$\pi_a^*L = p^*(\pi_C^*L)$. Moreover, for any line bundle
$M \in \operatorname{Pic}(C)$ we have $\operatorname{Nm}_{S_a/C}(p^*M) = M^{\otimes 2}$.
Substituting $M = \pi_C^*L$, we get
\[
  \operatorname{Nm}_{S_a/C}\big(\pi_a^*L\big) = \big(\pi_C^*L\big)^{\otimes 2} = \pi_C^*\big(L^{\otimes 2}\big).
\]
Since $\operatorname{Pic}^0(\Sigma)$ is a complex torus, it is $2$-divisible,
i.e. squaring is surjective. Therefore
$\{L^{\otimes 2} : L \in \operatorname{Pic}^0(\Sigma)\} = \operatorname{Pic}^0(\Sigma)$, and
\[
  \operatorname{Nm}_{S_a/C}\big(\pi_a^*\operatorname{Pic}^0(\Sigma)\big) = \pi_C^*\operatorname{Pic}^0(\Sigma).
\]
This proves (ii).

(iii) By (ii),
\[
  B_0^\perp = \ker\left( \frac{\operatorname{Pic}^0(S_a)}{\pi_a^*\operatorname{Pic}^0(\Sigma)} \xrightarrow{\ \operatorname{Nm}_{S_a/C}\ } \frac{\operatorname{Pic}^0(C)}{\pi_C^*\operatorname{Pic}^0(\Sigma)} \right).
\]
Suppose a class $[L] \in A^\vee$ belongs to $B_0^\perp$. This means that
$\operatorname{Nm}_{S_a/C}(L) \in \pi_C^*\operatorname{Pic}^0(\Sigma)$, i.e.
$\operatorname{Nm}_{S_a/C}(L) = \pi_C^*N$ for some $N \in \operatorname{Pic}^0(\Sigma)$.
Since $\operatorname{Pic}^0(\Sigma)$ is $2$-divisible, we can choose
$M \in \operatorname{Pic}^0(\Sigma)$ with $M^{\otimes 2} = N$. Define
\[
  L' := L \otimes \pi_a^*(M^{-1}) \in \operatorname{Pic}^0(S_a).
\]
In particular $[L] = [L']$ in $A^\vee$. We compute the norm of this new
representative:
\[
  \operatorname{Nm}_{S_a/C}(L') = \operatorname{Nm}_{S_a/C}(L) \otimes \operatorname{Nm}_{S_a/C}\big(\pi_a^*(M^{-1})\big).
\]
Note that
\[
  \operatorname{Nm}_{S_a/C}\big(\pi_a^*(M^{-1})\big) = \operatorname{Nm}_{S_a/C}\big(p^*(\pi_C^*(M^{-1}))\big) = \pi_C^*\big(M^{-2}\big).
\]
Substituting these, we obtain
\[
  \operatorname{Nm}_{S_a/C}(L') = \pi_C^*N \otimes \pi_C^*\big(M^{-2}\big) = \pi_C^*\big(N \otimes M^{-2}\big) = \mathcal{O}_C,
\]
because $M^{\otimes 2} = N$. Therefore $L' \in \operatorname{Prym}(S_a/C)$. We have
shown that every class in $B_0^\perp$ admits a representative in
$\operatorname{Prym}(S_a/C)$. Conversely, for any $L \in \operatorname{Prym}(S_a/C)$, its
class in $A^\vee$ clearly belongs to $B_0^\perp$. Therefore $B_0^\perp$ is the
image of $\operatorname{Prym}(S_a/C)$ in $A^\vee$, which gives the displayed
isomorphism. Finally, $B_0^\perp$ is connected, since it is the dual
$(A/B_0)^\vee$ of an abelian variety (equivalently, since the kernel of the
norm of the ramified double cover $p$ is connected \cite{Mum71}).

(iv) By a theorem of Mukai \cite{Mu81}, for an abelian subvariety
$B_0 \subset A$, the transform $\mathcal{F}(i_*\mathcal{O}_{B_0})$ is, up to shift,
a sheaf supported exactly on $B_0^\perp$: for $\xi \in A^\vee$, the cohomology
$H^\bullet(B_0, \mathcal{L}_a|_{B_0 \times \{\xi\}})$ vanishes unless
$\mathcal{L}_a|_{B_0 \times \{\xi\}}$ is trivial, i.e. unless $\xi \in B_0^\perp$.
Hence $\operatorname{Supp} \mathcal{F}(i_*\mathcal{O}_{B_0}) = B_0^\perp$.

Now let $B_L = L \otimes B_0$ be another component of $B$, and consider the
translation $t_L \colon A \to A$, $t_L(L') = L' \otimes L$. Then
$t_{L*}(i_*\mathcal{O}_{B_0}) \cong i_*\mathcal{O}_{B_L}$. For any point $L \in A$, the
restriction of the Poincar\'e bundle gives a flat, degree-zero line bundle
$\mathcal{L}_L := \mathcal{L}_a|_{\{L\} \times A^\vee}$ on $A^\vee$, and translating a
sheaf on $A$ by $L$ corresponds to tensoring its transform by $\mathcal{L}_L$
\cite{Mu81}:
\[
  \mathcal{F}\big(t_{L*}(i_*\mathcal{O}_{B_0})\big) \cong \mathcal{F}(i_*\mathcal{O}_{B_0}) \otimes \mathcal{L}_L .
\]
Tensoring with a line bundle does not change the support, so
$\operatorname{Supp} \mathcal{F}(i_*\mathcal{O}_{B_L}) = B_0^\perp$. Since $\mathcal{F}$ is
additive and the direct sum defining $\mathcal{E}$ is finite,
\[
  \operatorname{Supp} \mathcal{F}(\mathcal{E}) = \operatorname{Supp} \mathcal{F}(i_*\mathcal{O}_{B_0}) \cup \operatorname{Supp} \mathcal{F}(i_*\mathcal{O}_{B_{L_1}}) \cup \cdots = B_0^\perp \cup B_0^\perp \cup \cdots = B_0^\perp . \qedhere
\]
This completes the proof. 
\end{proof}

% -------------------------------------------------------------------
\subsection{Global identifications and the relative Fourier--Mukai transform}\label{subsec:FM}

Throughout this section we work in the complex analytic category: the moduli spaces, the relative Prym abelian scheme $\boldsymbol{\mathcal A}$, its dual $\boldsymbol{\mathcal A}^\vee$ and the Poincaré bundle are algebraic, and we use their associated complex analytic spaces and analytic line bundles. Coherent sheaves are analytic coherent sheaves, and the results we use (flatness criteria, proper and flat base change, and the Fourier–Mukai results of \cite{Mu81}) hold in this setting.

\noindent\textbf{The $SL(2n,\mathbb{C})$ side.} By the relative BNR
correspondence \cite{bnr},
\[
  X = \mathcal{M}_{\mathrm{reg}}(SL(2n,\mathbb{C})) \cong \mathcal{N} := \operatorname{Nm}^{-1}\big(K^{n(2n-1)}\big) \subset \operatorname{Pic}^d(\mathcal{S}/\mathcal{A}_{\mathrm{reg}}), \qquad d = n(2n-1)(2g-2),
\]
where the norm condition comes from $\det \pi_{a*}L = \operatorname{Nm}(L) \otimes K^{-n(2n-1)}$
and $\det E \cong \mathcal{O}_\Sigma$. Fix a theta characteristic $K^{1/2}$ and
consider the line bundle $L_0 := \pi^*K^{(2n-1)/2}$ on $\mathcal{S}$. Since
$\operatorname{Nm}(L_0|_{S_a}) = K^{n(2n-1)}$, the map
\[
  \varphi \colon X \cong \mathcal{N} \longrightarrow \boldsymbol{\mathcal{A}} := \operatorname{Prym}(\mathcal{S}/\Sigma), \qquad L \longmapsto L \otimes L_0^{-1},
\]
is a global biholomorphism onto the relative Prym abelian scheme
$\boldsymbol{\mathcal{A}} \to \mathcal{A}_{\mathrm{reg}}$. Its restriction to the fibre over $a$ is the
biholomorphism
\[
  \varphi_a \colon h^{-1}(a) \cong \mathcal{N}_a \xrightarrow{\ \cong\ } A = \operatorname{Prym}(S_a/\Sigma), \qquad L \longmapsto L \otimes L_0(a)^{-1},
\]
where $\mathcal{N}_a := \{L \in \operatorname{Pic}^d(S_a) : \operatorname{Nm}(L) \cong K^{n(2n-1)}\}$
and $L_0(a) := L_0|_{S_a} = \pi_a^*K^{(2n-1)/2}$. The point
$\varphi_a^{-1}(\mathcal{O}_{S_a})$ is the Hitchin section \cite{Hi92}.

\begin{lemma}\label{lem:intertwine}
For $a \in \mathcal{A}^{\mathrm{even}}_{\mathrm{reg}}$, the biholomorphism $\varphi_a$ maps
$\mathcal{M}^{\mathrm{rev}}_{\mathrm{reg}} \cap h^{-1}(a)$ onto $B$.
\end{lemma}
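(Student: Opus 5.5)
The plan is to reduce the statement to Theorem~\ref{thm:even-fibre}, together with a check that the translation by $L_0(a)$ is compatible with $\sigma$. The argument has three steps.

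\emph{Step 1: reversibility as $\sigma$-invariance.} Let $(E,\Phi)\in h^{-1}(a)$ with $a\in\mathcal{A}^{\mathrm{even}}_{\mathrm{reg}}$, and let $L\in\mathcal{N}_a$ be its spectral line bundle, so that $E=\pi_{a*}L$. The argument in the proof of Theorem~\ref{thm:even-fibre} does not depend on the determinant condition. It uses only $\pi_a\circ\sigma=\pi_a$ and $\sigma^*\eta=-\eta$, and shows that $\sigma^*L$ is the spectral line bundle of $(E,-\Phi)$. Two points need checking in the $SL(2n,\mathbb{C})$ setting.
\begin{itemize}
\item[(a)] $\sigma^*L$ again lies in $\mathcal{N}_a$. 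Since $\pi_a\circ\sigma=\pi_a$, we have $\operatorname{Nm}_{S_a/\Sigma}(\sigma^*L)\cong\operatorname{Nm}_{S_a/\Sigma}(L)$, and this is also clear from $\det\pi_{a*}\sigma^*L=\det E$.
\item[(b)] Isomorphism of $SL(2n,\mathbb{C})$-Higgs bundles agrees with isomorphism of the underlying $GL$-Higgs bundles on the regular locus. Indeed, an isomorphism $(E,\Phi)\to(E,-\Phi)$ corresponds to an isomorphism $L\cong\sigma^*L$, which is unique up to a scalar in $H^0(S_a,\mathcal{O}^*)=\mathbb{C}^*$. Rescaling by a $2n$-th root makes its determinant equal to $1$.
\end{itemize}
Consequently, $(E,\Phi)$ is reversible if and only if $L\cong\sigma^*L$.

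\emph{Step 2: $L_0(a)$ is $\sigma$-invariant.} We have $L_0(a)=\pi_a^*K^{(2n-1)/2}$ and $\pi_a\circ\sigma=\pi_a$. Hence $\sigma^*L_0(a)=(\pi_a\circ\sigma)^*K^{(2n-1)/2}=L_0(a)$ canonically. Because $\sigma^*$ is a group homomorphism on $\operatorname{Pic}(S_a)$, this gives
\[
\sigma^*\varphi_a(L)=\sigma^*L\otimes L_0(a)^{-1}.
\]
Therefore $L\cong\sigma^*L$ holds if and only if $\varphi_a(L)\cong\sigma^*\varphi_a(L)$, that is, if and only if $\varphi_a(L)\in B$. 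Note that $\varphi_a(L)$ lies in $A$ automatically.

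\emph{Step 3: surjectivity onto $B$.} Since $\varphi_a$ is a bijection from $h^{-1}(a)\cong\mathcal{N}_a$ onto $A$, every $M\in B$ has the form $\varphi_a(L)$ with $L=M\otimes L_0(a)\in\mathcal{N}_a$. By Step 2 this $L$ is $\sigma$-invariant, so by Step 1 it corresponds to a reversible Higgs bundle. This gives the equality $\varphi_a(\mathcal{M}^{\mathrm{rev}}_{\mathrm{reg}}\cap h^{-1}(a))=B$.

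The main obstacle is point (b) of Step 1. One must make sure that "reversible" in $\mathcal{M}(SL(2n,\mathbb{C}))$, meaning isomorphic as $SL$-Higgs bundles, agrees with the $GL$ notion used in Theorem~\ref{thm:even-fibre}. The scalar-rescaling argument settles this, using that the automorphisms are exactly the scalars, as recorded in \S\ref{subsec:notation}.
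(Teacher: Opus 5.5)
Your proposal is correct and follows the paper's proof. Both arguments use Theorem~\ref{thm:even-fibre} to identify $(E,\Phi)\mapsto(E,-\Phi)$ with $L\mapsto\sigma^*L$ on $\mathcal{N}_a$, and then use $\sigma^*L_0(a)\cong L_0(a)$ to see that $\varphi_a$ intertwines this with $\sigma^*$ on $A$, so the fixed loci correspond. Your point (b) is a detail the paper leaves implicit: $SL(2n,\mathbb{C})$-isomorphism and $GL$-isomorphism agree here, because $\det f$ is a nonzero constant on compact $\Sigma$ and can be normalized by a $2n$-th root.
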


\begin{proof}
By Theorem~\ref{thm:even-fibre}, under the BNR identification the involution
$(E,\Phi) \mapsto (E,-\Phi)$ of $h^{-1}(a)$ corresponds to $L \mapsto \sigma^*L$
(which preserves $\mathcal{N}_a$, as $\pi_a \circ \sigma = \pi_a$). Since
$\pi_a \circ \sigma = \pi_a$, we have
$\sigma^*L_0(a) = \sigma^*\pi_a^*K^{(2n-1)/2} \cong L_0(a)$, hence
\[
  \varphi_a(\sigma^*L) = \sigma^*L \otimes L_0(a)^{-1} = \sigma^*\big(L \otimes L_0(a)^{-1}\big) = \sigma^*\varphi_a(L).
\]
Thus $\varphi_a$ intertwines the two involutions, and therefore maps the fixed
locus $\mathcal{M}^{\mathrm{rev}}_{\mathrm{reg}} \cap h^{-1}(a)$ onto
$\operatorname{Fix}(\sigma^*) = B$.
\end{proof}

\noindent\textbf{The $PGL(2n,\mathbb{C})$ side.} Let $\pi^*\operatorname{Pic}^0(\Sigma)$
act on $\operatorname{Pic}^d(\mathcal{S}/\mathcal{A}_{\mathrm{reg}})$ by tensor product. A point of
$\hat X$ is represented by $(E,\Phi)$ with $\deg E = 0$, uniquely up to
$E \mapsto E \otimes M$ with $M \in \operatorname{Pic}^0(\Sigma)$; since
$\pi_*(L \otimes \pi^*M) = \pi_*L \otimes M$, the relative BNR correspondence
gives a canonical biholomorphism of $\hat X$ with the orbit space
$\operatorname{Pic}^d(\mathcal{S}/\mathcal{A}_{\mathrm{reg}})/\pi^*\operatorname{Pic}^0(\Sigma)$. Consider
\[
  \hat\varphi \colon \hat X \cong \operatorname{Pic}^d(\mathcal{S}/\mathcal{A}_{\mathrm{reg}})/\pi^*\operatorname{Pic}^0(\Sigma) \longrightarrow \boldsymbol{\mathcal{A}}^\vee := \operatorname{Pic}^0(\mathcal{S}/\mathcal{A}_{\mathrm{reg}})/\pi^*\operatorname{Pic}^0(\Sigma), \qquad [L] \longmapsto [L \otimes \pi^*N^{-1}],
\]
with $N$ a line bundle on $\Sigma$ of degree $(2n-1)(g-1)$. This is also a
global biholomorphism, and it is independent of the choice of $N$, since for
two such choices $N$ and $N'$, we have $\pi^*(N \otimes N'^{-1}) \in \pi^*\operatorname{Pic}^0(\Sigma)$.
In particular, since $\deg K^{(2n-1)/2} = (2n-1)(g-1)$,
we can take $N = K^{(2n-1)/2}$, which gives  $\hat\varphi_a([L]) = [L \otimes L_0(a)^{-1}]$.
Its fibrewise restriction is denoted by
\[
  \hat\varphi_a \colon \hat h^{-1}(a) \xrightarrow{\ \cong\ } A^\vee = \operatorname{Pic}^0(S_a)/\pi_a^*\operatorname{Pic}^0(\Sigma).
\]

\noindent\textbf{The Poincar\'e line bundle.} The relative Prym
$\boldsymbol{\mathcal{A}} \to \mathcal{A}_{\mathrm{reg}}$ is an abelian scheme, and
$\boldsymbol{\mathcal{A}}^\vee \to \mathcal{A}_{\mathrm{reg}}$ is its dual abelian scheme. Let
$\mathcal{P}^{\mathrm{univ}}$ be the normalized (universal) Poincar\'e line bundle on
$\boldsymbol{\mathcal{A}} \times_{\mathcal{A}_{\mathrm{reg}}} \boldsymbol{\mathcal{A}}^\vee$. Define the line bundle
\[
  \mathcal{P} := (\varphi \times \hat\varphi)^* \mathcal{P}^{\mathrm{univ}} \quad \text{on } X \times_{\mathcal{A}_{\mathrm{reg}}} \hat X .
\]
The restriction of $\mathcal{P}^{\mathrm{univ}}$ to each fibre $A \times A^\vee$ is the
classical normalized Poincar\'e bundle $\mathcal{L}_a$. In particular,
\begin{equation}\label{eq:3.1}
  \mathcal{P}_a := \mathcal{P}|_{h^{-1}(a) \times \hat h^{-1}(a)} = (\varphi_a \times \hat\varphi_a)^* \mathcal{L}_a .
\end{equation}

\noindent\textbf{The relative Fourier--Mukai transform.} Let
\[
  p_1 \colon X \times_{\mathcal{A}_{\mathrm{reg}}} \hat X \to X, \qquad p_2 \colon X \times_{\mathcal{A}_{\mathrm{reg}}} \hat X \to \hat X
\]
be the projections, and define
\[
  \Psi \colon D^b\big(\mathrm{Coh}(X)\big) \longrightarrow D^b\big(\mathrm{Coh}(\hat X)\big), \qquad
  \Psi(\mathcal{K}) = Rp_{2*}\big( Lp_1^*\mathcal{K} \otimes \mathcal{P} \big).
\]
No derived tensor product is needed, since $\mathcal{P}$ is a line bundle. For
$a \in \mathcal{A}_{\mathrm{reg}}$, let $j_a \colon h^{-1}(a) \hookrightarrow X$ and
$\hat{j}_a \colon \hat h^{-1}(a) \hookrightarrow \hat X$ be the inclusions, and let
$\Psi_a$ be the fibrewise Fourier--Mukai transform with kernel $\mathcal{P}_a$.

\begin{proposition}\label{prop:base-change}
\begin{enumerate}
  \item[(i)] $\Psi$ takes values in $D^b(\mathrm{Coh}(\hat X))$, and for every
    $\mathcal{K} \in D^b(\mathrm{Coh}(X))$ and $a \in \mathcal{A}_{\mathrm{reg}}$,
    \begin{equation}\label{eq:3.2}
      L\hat{j}_a^* \Psi(\mathcal{K}) \cong \Psi_a\big(Lj_a^*\mathcal{K}\big).
    \end{equation}
  \item[(ii)] For every $\mathcal{K}_a \in D^b(\mathrm{Coh}(h^{-1}(a)))$,
    $\Psi_a(\mathcal{K}_a) \cong \hat\varphi_a^*\, \mathcal{F}(\varphi_{a*}\mathcal{K}_a)$.
\end{enumerate}
\end{proposition}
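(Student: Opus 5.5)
Both parts reduce to standard base-change statements for the proper smooth morphism $\hat h\colon \hat X\to\mathcal{A}_{\mathrm{reg}}$. For (i), the key observation is that $p_2\colon X\times_{\mathcal{A}_{\mathrm{reg}}}\hat X\to\hat X$ is the base change of $h\colon X\to\mathcal{A}_{\mathrm{reg}}$ along $\hat h$. Via $\varphi$, the map $h$ is identified with the structure map of the abelian scheme $\boldsymbol{\mathcal{A}}$, so it is proper, flat and smooth of relative dimension $\dim A$; the same therefore holds for $p_2$. Since $X$ and $\hat X$ are smooth, so is the fibre product. Consequently $Lp_1^*\mathcal{K}\otimes\mathcal{P}$ lies in $D^b(\mathrm{Coh})$. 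By Grauert's direct image theorem in the analytic setting, $Rp_{2*}$ of a bounded coherent complex along a proper map is coherent with bounded cohomology; boundedness follows from the finite relative dimension. Hence $\Psi(\mathcal{K})\in D^b(\mathrm{Coh}(\hat X))$.

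To prove \eqref{eq:3.2}, I will consider the cartesian square formed by $\hat j_a\colon\hat h^{-1}(a)\hookrightarrow\hat X$ and $p_2$. Its pullback is $h^{-1}(a)\times\hat h^{-1}(a)\hookrightarrow X\times_{\mathcal{A}_{\mathrm{reg}}}\hat X$; call this inclusion $k_a$, with projections $q_1,q_2$. Because $p_2$ is flat and proper, the base change theorem for proper flat morphisms gives
$$L\hat j_a^*Rp_{2*}(\mathcal{G})\cong Rq_{2*}Lk_a^*\mathcal{G}$$
for any $\mathcal{G}\in D^b(\mathrm{Coh})$. This holds for arbitrary base change along a flat proper map, including the non-flat inclusion $\hat j_a$, and I will cite the analytic version (e.g.\ via Grauert--Remmert/Bănică–Stănăşilă) or deduce it by GAGA from the algebraic abelian scheme. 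I then apply this to $\mathcal{G}=Lp_1^*\mathcal{K}\otimes\mathcal{P}$. Pullback is monoidal, so $Lk_a^*\mathcal{G}\cong Lk_a^*Lp_1^*\mathcal{K}\otimes k_a^*\mathcal{P}$. Next, $p_1\circ k_a=j_a\circ q_1$, which gives $Lk_a^*Lp_1^*\mathcal{K}\cong Lq_1^*Lj_a^*\mathcal{K}$, and $k_a^*\mathcal{P}=\mathcal{P}_a$ by definition. Combining these, the right-hand side becomes $Rq_{2*}(Lq_1^*Lj_a^*\mathcal{K}\otimes\mathcal{P}_a)=\Psi_a(Lj_a^*\mathcal{K})$. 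The step needing the most care is justifying base change along the non-flat map $\hat j_a$. It is exactly here that flatness of $p_2$ (i.e.\ of $h$) is used, rather than any flatness of $\mathcal{K}$; I regard this as the main technical point.

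For (ii), I will use \eqref{eq:3.1}, namely $\mathcal{P}_a=(\varphi_a\times\hat\varphi_a)^*\mathcal{L}_a$, together with the fact that $\varphi_a,\hat\varphi_a$ are biholomorphisms. Writing $r_1,r_2$ for the projections of $A\times A^\vee$, we have $q_1=r_1\circ(\varphi_a\times\hat\varphi_a)$ composed appropriately with $\varphi_a^{-1}$ on the first factor, and similarly for $q_2$. Since pullback and pushforward along isomorphisms are inverse equivalences, this yields
$$\Psi_a(\mathcal{K}_a)=Rq_{2*}\bigl(q_1^*\mathcal{K}_a\otimes(\varphi_a\times\hat\varphi_a)^*\mathcal{L}_a\bigr)\cong\hat\varphi_a^*Rr_{2*}\bigl(r_1^*\varphi_{a*}\mathcal{K}_a\otimes\mathcal{L}_a\bigr)=\hat\varphi_a^*\mathcal{F}(\varphi_{a*}\mathcal{K}_a).$$
This is a formal transport-of-structure computation: one uses $(\varphi_a\times\hat\varphi_a)_*$ to move everything to $A\times A^\vee$, together with the projection formula for an isomorphism.
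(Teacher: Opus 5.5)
Your proposal is correct and follows essentially the same route as the paper: coherence of $\Psi(\mathcal{K})$ from properness of $p_2$ (a base change of $h$); Tor-independent base change along $\hat j_a$, justified by flatness of $p_2$; compatibility $p_1\circ k_a=j_a\circ q_1$ together with $k_a^*\mathcal{P}=\mathcal{P}_a$; and, for (ii), transport of structure along the biholomorphism $\varphi_a\times\hat\varphi_a$. Two small slips do not affect the argument: your opening sentence should refer to $h$ rather than $\hat h$, and smoothness of the fibre product comes from $h$ being a submersion, not merely from $X$ and $\hat X$ being smooth.
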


\begin{proof}
(i) The map $p_2$ is proper, being a base change of the proper map
$h \colon X \to \mathcal{A}_{\mathrm{reg}}$; hence $Rp_{2*}$ preserves bounded coherent
complexes. The isomorphism \eqref{eq:3.2} follows from three facts:
derived base change for the proper map $p_2$ along the Cartesian square formed
by $\hat j_a$ and the inclusion of $h^{-1}(a) \times \hat h^{-1}(a)$, which is
Tor-independent because $p_2$ is flat (a base change of the flat map $h$);
functoriality of derived pullback along composites, which gives
$L(p_1|_{h^{-1}(a) \times \hat h^{-1}(a)})^* Lj_a^* \cong (\text{restriction}) \circ Lp_1^*$;
and the restriction \eqref{eq:3.1} of $\mathcal{P}$, cf.\ \cite{BBH09}.

(ii) By \eqref{eq:3.1}, the kernel of $\Psi_a$ is the pullback of the kernel of
$\mathcal{F}$ under the biholomorphism $\varphi_a \times \hat\varphi_a$; the
claim follows by a change of variables in the defining formula of the
Fourier--Mukai transform.
\end{proof}

We shall also use the following general fact about supports. Here the support
of a complex is the union of the supports of its cohomology sheaves.

\begin{lemma}\label{lem:support}
Let $f \colon Y \to Z$ be a holomorphic map of complex manifolds, and let
$\mathcal{K} \in D^b(\mathrm{Coh}(Z))$. Then
$\operatorname{Supp}(Lf^*\mathcal{K}) = f^{-1}\big(\operatorname{Supp}\mathcal{K}\big)$. In
particular, if $f$ is a closed immersion, then
$\operatorname{Supp}(Lf^*\mathcal{K}) = \operatorname{Supp}(\mathcal{K}) \cap Y$. Moreover, support
is invariant under isomorphisms in the derived category, finite direct sums and
shifts.
\end{lemma}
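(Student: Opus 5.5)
The statement to prove is Lemma~\ref{lem:support}: for a holomorphic map $f\colon Y\to Z$ and $\mathcal K\in D^b(\mathrm{Coh}(Z))$, we have $\operatorname{Supp}(Lf^*\mathcal K)=f^{-1}(\operatorname{Supp}\mathcal K)$. The plan is to reduce everything to a pointwise criterion: a point lies in the support of a bounded coherent complex if and only if the derived fibre there is nonzero.

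\textbf{Step 1: the pointwise criterion.} For $z\in Z$, let $k(z)$ be the residue field and $i_z\colon\{z\}\to Z$ the inclusion. I claim that $z\in\operatorname{Supp}\mathcal K$ if and only if $Li_z^*\mathcal K=\mathcal K\otimes^L k(z)\neq 0$.

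\emph{If $z\notin\operatorname{Supp}\mathcal K$}, then $\mathcal K$ restricted to a neighbourhood of $z$ is zero, so the derived fibre vanishes.

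\emph{If $z\in\operatorname{Supp}\mathcal K$}, let $q$ be the largest index with $\mathcal H^q(\mathcal K)_z\neq0$; this exists by boundedness. Right-exactness of $\otimes$ gives
\[
  H^q\big(\mathcal K\otimes^L k(z)\big)\cong \mathcal H^q(\mathcal K)_z\otimes k(z).
\]
To check this, truncate: the terms $\tau_{<q}\mathcal K$ contribute only in degrees $\le q-1$, and $\tau_{\ge q}\mathcal K$ has top cohomology in degree $q$ near $z$. By Nakayama's lemma, $\mathcal H^q(\mathcal K)_z\otimes k(z)\neq0$, since $\mathcal H^q(\mathcal K)_z$ is a nonzero finitely generated module over the local ring $\mathcal O_{Z,z}$, which is noetherian in the analytic setting.

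\textbf{Step 2: compatibility with pullback.} For $y\in Y$ with $z=f(y)$, we have $f\circ i_y=i_z$, so functoriality of derived pullback gives
\[
  Li_y^*Lf^*\mathcal K\cong Li_z^*\mathcal K\otimes_{k(z)}k(y).
\]
This is a field extension, which is faithfully flat, or in fact trivial since both residue fields are $\mathbb C$. Hence $Li_y^*Lf^*\mathcal K\neq0$ if and only if $Li_z^*\mathcal K\neq0$.

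Since $Lf^*\mathcal K$ lies in $D^b(\mathrm{Coh}(Y))$ when it has bounded amplitude, the criterion of Step 1 applies on $Y$ as well. Here $Z$ is a complex manifold, so $\mathcal O_{Z,z}$ is regular and $\mathcal K$ is locally quasi-isomorphic to a bounded complex of locally free sheaves; this gives bounded amplitude. Combining the two steps yields $y\in\operatorname{Supp}Lf^*\mathcal K$ if and only if $f(y)\in\operatorname{Supp}\mathcal K$.

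\textbf{Step 3: the remaining assertions.} For a closed immersion, $f^{-1}(S)=S\cap Y$, so the second claim is immediate. Invariance of support under isomorphism, shift and finite direct sums holds because cohomology sheaves are invariant under quasi-isomorphism, shift only reindexes them, and $\mathcal H^q$ commutes with finite direct sums.

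\textbf{Main obstacle.} The only delicate point is Step 1 in the analytic category: one needs coherence and Nakayama's lemma for the stalks $\mathcal O_{Z,z}$, and a local finite locally free resolution to guarantee boundedness. Both are standard for complex manifolds, using noetherianity of $\mathcal O_{Z,z}$ and Oka's coherence theorem.
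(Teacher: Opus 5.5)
Your proof is correct and rests on the same mechanism as the paper's. In both, one takes the top degree $q$ with $\mathcal H^q(\mathcal K)_{f(y)}\neq 0$, uses right exactness of $\otimes$ to identify the degree-$q$ cohomology after derived base change, and concludes with Nakayama, with smoothness of $Z$ giving boundedness. The difference is only packaging: you route the argument through the derived fibres $\mathcal K\otimes^L k(z)$ and apply that criterion on both $Z$ and $Y$, whereas the paper computes $\mathcal H^q(Lf^*\mathcal K)_y\cong\mathcal H^q(\mathcal K)_{f(y)}\otimes_{\mathcal O_{Z,f(y)}}\mathcal O_{Y,y}$ directly and obtains the inclusion $\subseteq$ from locality of $Lf^*$.
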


\begin{proof}
The inclusion $\subseteq$ holds since derived pullback is local. For
$\supseteq$, let $y \in f^{-1}(\operatorname{Supp}\mathcal{K})$, and let $q$ be the
largest integer with $\mathcal{H}^q(\mathcal{K})_{f(y)} \neq 0$. Since $Z$ is smooth,
$\mathcal{K}$ is locally quasi-isomorphic to a bounded complex of vector bundles,
and no cohomology above degree $q$ can contribute, so
$\mathcal{H}^q(Lf^*\mathcal{K})_y \cong \mathcal{H}^q(\mathcal{K})_{f(y)} \otimes_{\mathcal{O}_{Z,f(y)}} \mathcal{O}_{Y,y}$,
which is nonzero by Nakayama's lemma. The last assertion is immediate from the
definition, since isomorphic objects have isomorphic cohomology sheaves.
\end{proof}

\subsection{The structure sheaf of the reversible locus}\label{subsec:revsheaf}

Write $T := \mathcal{A}^{\mathrm{even}}_{\mathrm{reg}}$, and let
\[
  Y := h^{-1}(T) \subset X, \qquad \hat Y := \hat h^{-1}(T) \subset \hat X,
\]
with inclusions $\iota \colon Y \hookrightarrow X$ and
$\hat\iota \colon \hat Y \hookrightarrow \hat X$. On the regular locus all
fibres of $h$ are abelian varieties of dimension $\frac{1}{2}\dim X$; since $X$
is smooth and $\mathcal{A}_{\mathrm{reg}}$ is regular, $h$ is flat by miracle
flatness \cite[Theorem~23.1]{Mat89}, and having smooth fibres, it is a
submersion. The same holds for $\hat h$. Hence $Y$ and $\hat Y$ are closed
complex submanifolds of $X$ and $\hat X$, being preimages of the closed
submanifold $T \subset \mathcal{A}_{\mathrm{reg}}$ under submersions, and the
restrictions
\[
  h|_Y \colon Y \longrightarrow T, \qquad \hat h|_{\hat Y} \colon \hat Y \longrightarrow T
\]
are again submersions, hence flat. For $a \in T$, let
$y_a \colon h^{-1}(a) \hookrightarrow Y$ and
$\hat y_a \colon \hat h^{-1}(a) \hookrightarrow \hat Y$ be the inclusions, so
that $j_a = \iota \circ y_a$ and $\hat{j}_a = \hat\iota \circ \hat y_a$.

Every reversible Higgs bundle lies over $\mathcal{A}^{\mathrm{even}}$, and
$\mathcal{M}^{\mathrm{rev}}_{\mathrm{reg}}$ is closed in $X$, being a fixed
locus; hence we have a closed immersion
$k \colon \mathcal{M}^{\mathrm{rev}}_{\mathrm{reg}} \hookrightarrow Y$. Define
\[
  \mathcal{G} := k_*\mathcal{O}_{\mathcal{M}^{\mathrm{rev}}_{\mathrm{reg}}}, \qquad
  \mathcal{E}_{\mathrm{global}} := \iota_*\mathcal{G} = \iota_* k_* \mathcal{O}_{\mathcal{M}^{\mathrm{rev}}_{\mathrm{reg}}},
\]
regarded as an object of $D^b(\mathrm{Coh}(X))$ concentrated in degree $0$.
Pushforward along a closed immersion is exact, so no derived functor is needed
here, and $\operatorname{Supp}\mathcal{E}_{\mathrm{global}} = \mathcal{M}^{\mathrm{rev}}_{\mathrm{reg}}$.

\begin{proposition}\label{prop:revsheaf}
\begin{enumerate}
  \item[(i)] The restriction
    $p := h|_{\mathcal{M}^{\mathrm{rev}}_{\mathrm{reg}}} \colon \mathcal{M}^{\mathrm{rev}}_{\mathrm{reg}} \to T$
    is flat. Consequently $\mathcal{G}$ is flat over $T$ via $h|_Y$, and
    $Ly_a^*\mathcal{G} \cong y_a^*\mathcal{G}$ for every $a \in T$.
  \item[(ii)] For $a \in T$, the sheaf $\mathcal{H} := y_a^*\mathcal{G}$ is the
    structure sheaf of $\mathcal{M}^{\mathrm{rev}}_{\mathrm{reg}} \cap h^{-1}(a)$,
    pushed forward to $h^{-1}(a)$, and $\varphi_{a*}\mathcal{H} \cong i_*\mathcal{O}_B = \mathcal{E}$.
\end{enumerate}
\end{proposition}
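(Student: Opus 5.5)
The plan is to move the whole question into the relative Prym abelian scheme through the biholomorphism $\varphi$, and then to read both (i) and (ii) off the structure of the fixed locus of a fibrewise group involution. Over $T$, the involution $(E,\Phi)\mapsto(E,-\Phi)$ acts on $Y$. Under $\varphi$, it corresponds to the relative involution $\sigma^*$ of the abelian scheme $\boldsymbol{\mathcal A}|_T\to T$; this follows from Lemma~\ref{lem:intertwine} applied fibre by fibre. The identification is a global one, because $L_0=\pi^*K^{(2n-1)/2}$ is $\sigma$-invariant over all of $T$ simultaneously. Thus $\varphi(\mathcal M^{\mathrm{rev}}_{\mathrm{reg}})$ is the fixed locus $\boldsymbol{\mathcal B}:=\operatorname{Fix}(\sigma^*)\subset\boldsymbol{\mathcal A}|_T$, and $\sigma^*$ is a homomorphism of abelian schemes over $T$.

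For (i), the key observation is that $\boldsymbol{\mathcal B}=\ker(\mathrm{id}-\sigma^*)$ is a closed subgroup scheme. Since we are in characteristic zero and $\sigma^*$ is an involution, the Lie algebra of the fibre $B$ is the $(+1)$-eigenspace of $\sigma$ on $H^1(S_a,\mathcal O)$ intersected with $\operatorname{Lie}A$. By Proposition~\ref{prop:fibrewise}(i), its dimension is $g(C)-g$, which depends only on the genus of $C$. By Riemann--Hurwitz, with the ramification number $4n(g-1)$ fixed, this dimension is constant on $T$. The fixed locus of a finite group acting on a smooth manifold is smooth, and its tangent space is the space of invariants. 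It follows that $\mathcal M^{\mathrm{rev}}_{\mathrm{reg}}$ is a complex submanifold of $Y$, which is consistent with Theorem~\ref{thm:BAA}.

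Next I would show that $p$ is a submersion. At a fixed point $x$, $dh$ is surjective onto $T_aT$. Moreover $dh$ is $\sigma$-equivariant, and $\sigma$ acts trivially on $T$ because $h(E,-\Phi)=h(E,\Phi)$ on the even locus. Averaging a preimage of a tangent vector over $\{1,\sigma\}$ therefore gives an invariant preimage, so $dp_x$ is surjective. A submersion is flat. Since $k$ is a closed immersion and $k_*$ commutes with the relevant pullbacks, flatness of $p$ gives flatness of $\mathcal G$ over $T$ via $h|_Y$. This in turn gives vanishing of the higher Tor sheaves against $\mathcal O_{h^{-1}(a)}$, which is cut out of $Y$ by pulling back a regular sequence on $T$. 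Hence $Ly_a^*\mathcal G\cong y_a^*\mathcal G$.

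For (ii), $y_a^*k_*\mathcal O=k_{a*}\mathcal O_{p^{-1}(a)}$, where $p^{-1}(a)$ carries the scheme-theoretic fibre structure. Because $p$ is a submersion, this fibre is reduced and smooth, and as a set it equals $\mathcal M^{\mathrm{rev}}_{\mathrm{reg}}\cap h^{-1}(a)$. Lemma~\ref{lem:intertwine} then says that $\varphi_a$ maps this set isomorphically onto the reduced subvariety $B$. Since $B$ is reduced, and is in fact a smooth group, this gives $\varphi_{a*}\mathcal H\cong i_*\mathcal O_B$. The main obstacle is the submersion step, that is, checking that the equivariant averaging argument really applies, and that the fixed-locus smoothness holds on the whole of $Y$ and not merely on the stable locus of Theorem~\ref{thm:BAA}. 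I would handle the latter directly via Cartan linearization on the smooth manifold $Y$.
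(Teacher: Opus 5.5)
Your argument is correct, but it reaches both parts by a different route from the paper's.

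\textbf{The paper's route.} For (i), the paper uses miracle flatness. It notes that $\mathcal M^{\mathrm{rev}}_{\mathrm{reg}}$ is smooth and $T$ is regular, and that every fibre $p^{-1}(a)\cong B$ has dimension $g(C)-g$. This dimension is constant by Riemann--Hurwitz, and the paper then identifies it with $\dim\mathcal M^{\mathrm{rev}}_{\mathrm{reg}}-\dim T$. For (ii), it writes $\mathcal H=(\mathcal O_Y/(I+J))|_F$ and shows $I+J=I_Z$ by applying Cartan linearization twice, to $\tilde\sigma$ on $Y$ and to $\tilde\sigma|_F$ on $F$. Both ideals are then generated by the anti-invariant functions $f-f\circ\tilde\sigma$.

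\textbf{Your route.} You prove one stronger fact: $p$ is a submersion. Averaging a lift over $\{1,\tilde\sigma\}$, using $h\circ\tilde\sigma=h$ on $Y$ and $T_x\mathcal M^{\mathrm{rev}}_{\mathrm{reg}}=(T_xY)^{\tilde\sigma}$, shows that $\mathcal M^{\mathrm{rev}}_{\mathrm{reg}}$ meets each fibre $h^{-1}(a)$ transversally, not merely cleanly. This gives flatness at once. It also gives (ii), since $\mathcal O_Y/(I+J)=\mathcal O_{\mathcal M^{\mathrm{rev}}_{\mathrm{reg}}}/p^*\mathfrak m_a\,\mathcal O_{\mathcal M^{\mathrm{rev}}_{\mathrm{reg}}}$ is the scheme-theoretic fibre of a submersion and is therefore reduced.

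\textbf{What each buys.} In your argument the fibre-dimension count, the surjectivity of $p$, and the global identification with $\operatorname{Fix}(\sigma^*)\subset\boldsymbol{\mathcal A}|_T$ are all redundant; everything can be done on $Y$ with $\tilde\sigma$. You also avoid having to pass from ``surjective with constant fibre dimension'' to the pointwise equality $\dim_x\mathcal M^{\mathrm{rev}}_{\mathrm{reg}}=\dim T+\dim_x p^{-1}(a)$ that miracle flatness actually needs. The paper's Step~2, for its part, is more general: it shows that $\operatorname{Fix}(\tilde\sigma)$ meets any $\tilde\sigma$-invariant submanifold cleanly, whether or not that submanifold is a fibre. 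But a clean intersection does not give flatness, which is why the paper needs the separate dimension argument for (i).
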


\begin{proof}
(i) We apply miracle flatness \cite[Theorem~23.1]{Mat89}. The source
$\mathcal{M}^{\mathrm{rev}}_{\mathrm{reg}}$ is a complex manifold, being an open
subset of the complex submanifold $\operatorname{Fix}(\tilde\sigma)$ of
Theorem~\ref{thm:BAA}. The target $T$ is regular, being open in a vector space.
The map $p$ is holomorphic and surjective: for every $a \in T$,
Lemma~\ref{lem:intertwine} gives $p^{-1}(a) \cong B$, which is nonempty since
$\mathcal{O}_{S_a} \in B$ (as $\sigma^*\mathcal{O}_{S_a} \cong \mathcal{O}_{S_a}$).
By Lemma~\ref{lem:intertwine} and Proposition~\ref{prop:fibrewise}(i), every
fibre $p^{-1}(a) \cong B$ has dimension $\dim B_0 = g(C) - g$. This is constant
on $T$: by adjunction $g(S_a) = 4n^2(g-1)+1$, and the Riemann--Hurwitz formula
for $p \colon S_a \to C$, ramified at $4n(g-1)$ points, gives
$g(C) = n(2n-1)(g-1)+1$. Since $p$ is surjective with fibres of constant
dimension, this dimension equals
$\dim\mathcal{M}^{\mathrm{rev}}_{\mathrm{reg}} - \dim T$. Hence $p$ is flat.

Since $k$ is a closed immersion, at any point $y \in Y$ the stalk
$\mathcal{G}_y$ equals $\mathcal{O}_{\mathcal{M}^{\mathrm{rev}}_{\mathrm{reg}},y}$
if $y \in \mathcal{M}^{\mathrm{rev}}_{\mathrm{reg}}$, and $\mathcal{G}_y = 0$
otherwise. In particular each $\mathcal{G}_y$ is a module over
$\mathcal{O}_{T,h(y)}$ via the map of local rings induced by $h|_Y$. Since $p$
is flat, $\mathcal{O}_{\mathcal{M}^{\mathrm{rev}}_{\mathrm{reg}},y}$ is flat over
$\mathcal{O}_{T,h(y)}$ for all $y \in \mathcal{M}^{\mathrm{rev}}_{\mathrm{reg}}$;
at the points $y \in Y \setminus \mathcal{M}^{\mathrm{rev}}_{\mathrm{reg}}$,
$\mathcal{G}_y = 0$ is trivially flat. Therefore $\mathcal{G}$ is flat over $T$.
Since $h|_Y$ is also flat, Tor-independence (flat base change for derived
tensor products) shows that the derived and underived pullbacks of $\mathcal{G}$
to the fibre $h^{-1}(a)$ agree: $Ly_a^*\mathcal{G} \cong y_a^*\mathcal{G}$.

(ii) Write $M := \mathcal{M}^{\mathrm{rev}}_{\mathrm{reg}}$ and $F := h^{-1}(a)$;
both are closed complex submanifolds of $Y$. Let $I, J \subset \mathcal{O}_Y$ be
their ideal sheaves, i.e.\ the sheaves of holomorphic functions vanishing on
$M$ and on $F$, respectively. The set $Z := M \cap F$ is the common zero locus
of $I + J$, hence a closed analytic subset of $Y$; let $I_Z \subset \mathcal{O}_Y$
be the ideal sheaf of all holomorphic functions vanishing on $Z$. Clearly
$I + J \subset I_Z$. Let $k_a \colon Z \hookrightarrow F$ denote the inclusion.

\emph{Step 1: $\mathcal{H}$ is the structure sheaf of the intersection.} We
have the Cartesian square
\[
  \begin{array}{ccc}
    Z & \longrightarrow & M \\
    {\scriptstyle k_a}\downarrow & & \downarrow{\scriptstyle k} \\
    F & \xrightarrow{\ y_a\ } & Y
  \end{array}
\]
Since $k_*\mathcal{O}_M = \mathcal{O}_Y/I$, and restriction to the closed
submanifold $F$ is given by tensoring with $\mathcal{O}_F = \mathcal{O}_Y/J$, we
obtain
\[
  \mathcal{H} = y_a^*\big(k_*\mathcal{O}_M\big)
  \cong \big(\mathcal{O}_Y/I \otimes_{\mathcal{O}_Y} \mathcal{O}_Y/J\big)\big|_F
  \cong \big(\mathcal{O}_Y/(I+J)\big)\big|_F .
\]
Thus $\mathcal{H}$ is the structure sheaf of $Z$ regarded as the intersection of
the complex analytic subspaces $M$ and $F$. In general this can differ from the
sheaf $\mathcal{O}_Y/I_Z$ of holomorphic functions on $Z$, by nilpotent
elements, when $M$ and $F$ meet non-transversally. We now show that in our
situation $I + J = I_Z$.

\emph{Step 2: $I + J = I_Z$.} Let $\tilde\sigma(E,\Phi) = (E,-\Phi)$. It is a
holomorphic involution of $Y$, and $M$ is its fixed point set. We use the
following consequence of Cartan's linearization theorem: if $\tau$ is a
holomorphic involution of a complex manifold $W$, then near any fixed point
there are holomorphic coordinates $(u,v)$ on $W$ in which $\tau(u,v) = (u,-v)$.
In these coordinates $\operatorname{Fix}(\tau) = \{v = 0\}$, and its ideal
$(v_1,\dots,v_s)$ is generated by the functions $f - f\circ\tau$: indeed
$v_j - v_j\circ\tau = 2v_j$, and conversely every $f - f\circ\tau$ vanishes on
$\{v = 0\}$. Hence:
\begin{itemize}
  \item applied to $\tilde\sigma$ on $Y$, the ideal $I$ is generated by the
    functions $f - f\circ\tilde\sigma$, with $f$ local holomorphic functions on $Y$;
  \item since $a$ is even, $\tilde\sigma$ preserves $F$, and every local
    holomorphic function $g$ on $F$ is the restriction of one on $Y$. Therefore
    the image $(I+J)/J$ of $I$ in $\mathcal{O}_F = \mathcal{O}_Y/J$ is generated by
    the functions $g - g\circ\tilde\sigma|_F$;
  \item applied to $\tilde\sigma|_F$ on $F$, whose fixed point set is $Z$, these
    functions generate the ideal of all holomorphic functions on $F$ vanishing
    on $Z$, i.e.\ $I_Z/J$.
\end{itemize}
Hence $(I+J)/J = I_Z/J$, that is, $I + J = I_Z$.

\emph{Step 3: conclusion.} By Steps 1 and 2,
$\mathcal{H} \cong (\mathcal{O}_Y/I_Z)|_F = k_{a*}\mathcal{O}_Z$, where
$\mathcal{O}_Z$ is the sheaf of holomorphic functions on the complex submanifold
$Z = \mathcal{M}^{\mathrm{rev}}_{\mathrm{reg}} \cap h^{-1}(a)$. By
Lemma~\ref{lem:intertwine}, the biholomorphism $\varphi_a$ maps $Z$ onto $B$,
and pushforward along a biholomorphism carries the structure sheaf of a closed
submanifold to that of its image. Since $B$ is the finite disjoint union of the
cosets $B_{L_j}$, we conclude
\[
  \varphi_{a*}\mathcal{H} \cong i_*\mathcal{O}_B
  = i_*\mathcal{O}_{B_0} \oplus i_*\mathcal{O}_{B_{L_1}} \oplus \cdots = \mathcal{E}. \qedhere
\]
\end{proof}

% -------------------------------------------------------------------
\subsection{The fibrewise support of the transform}\label{subsec:support}

We compare $\Psi$ with the relative Fourier--Mukai transform over the even base
$T$. Let
\[
  W := X \times_{\mathcal{A}_{\mathrm{reg}}} \hat X, \qquad
  W_T := Y \times_T \hat Y,
\]
with closed immersion $\kappa \colon W_T \hookrightarrow W$, and let
$q_1 \colon W_T \to Y$ and $q_2 \colon W_T \to \hat Y$ be the projections. Then
$p_1 \circ \kappa = \iota \circ q_1$ and $p_2 \circ \kappa = \hat\iota \circ q_2$.
Set $\mathcal{P}_T := \kappa^*\mathcal{P}$, and define
\[
  \Psi_T \colon D^b\big(\mathrm{Coh}(Y)\big) \longrightarrow D^b\big(\mathrm{Coh}(\hat Y)\big), \qquad
  \Psi_T(\mathcal{K}) = Rq_{2*}\big( Lq_1^*\mathcal{K} \otimes \mathcal{P}_T \big).
\]
For $a \in T$, the restriction of $\mathcal{P}_T$ to
$h^{-1}(a) \times \hat h^{-1}(a)$ is $\mathcal{P}_a$, by \eqref{eq:3.1}.

\begin{lemma}\label{lem:evenbase}
\begin{enumerate}
  \item[(i)] For every $\mathcal{K} \in D^b(\mathrm{Coh}(Y))$,
    $\Psi(\iota_*\mathcal{K}) \cong \hat\iota_*\Psi_T(\mathcal{K})$.
  \item[(ii)] $\Psi_T$ takes values in $D^b(\mathrm{Coh}(\hat Y))$, and for every
    $\mathcal{K} \in D^b(\mathrm{Coh}(Y))$ and $a \in T$,
    \[
      L\hat y_a^*\,\Psi_T(\mathcal{K}) \cong \Psi_a\big(Ly_a^*\mathcal{K}\big).
    \]
\end{enumerate}
\end{lemma}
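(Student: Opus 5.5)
For (i), I would use flat base change along the Cartesian square formed by $\hat\iota\colon\hat Y\hookrightarrow\hat X$ and $p_2\colon W\to\hat X$. Since $\hat Y=\hat h^{-1}(T)$, its preimage under $p_2$ is exactly $W_T=Y\times_T\hat Y$. Because $p_1$ is the base change of the flat map $\hat h$, this square is Tor-independent. The first step is to compute $Lp_1^*\iota_*\mathcal K$. Here $W_T=p_1^{-1}(Y)$ as well, and the square formed by $\iota$ and $p_1$ is Cartesian and Tor-independent, since $p_1$ is flat, being a base change of $\hat h$. Base change then gives $Lp_1^*\iota_*\mathcal K\cong\kappa_*Lq_1^*\mathcal K$. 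The projection formula for the closed immersion $\kappa$ gives $\kappa_*Lq_1^*\mathcal K\otimes\mathcal P\cong\kappa_*(Lq_1^*\mathcal K\otimes\kappa^*\mathcal P)$. Applying $Rp_{2*}$ and using $p_2\circ\kappa=\hat\iota\circ q_2$, together with the exactness of $\kappa_*$ and $\hat\iota_*$, yields
\[
\Psi(\iota_*\mathcal K)\cong R(p_2\kappa)_*(Lq_1^*\mathcal K\otimes\mathcal P_T)=\hat\iota_*Rq_{2*}(Lq_1^*\mathcal K\otimes\mathcal P_T)=\hat\iota_*\Psi_T(\mathcal K).
\]
The first two base changes are the key point, and I would verify Tor-independence through the flatness of $h$ and $\hat h$, which was established in \S\ref{subsec:revsheaf}.

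For (ii), I would repeat the argument of Proposition~\ref{prop:base-change}(i) with $T$ in place of $\mathcal A_{\mathrm{reg}}$. First, $q_2$ is proper, as the base change of the proper map $h|_Y\colon Y\to T$, so $Rq_{2*}$ preserves bounded coherent complexes. Second, $q_2$ is flat, as the base change of the submersion $h|_Y$. Hence the Cartesian square formed by $\hat y_a$ and the inclusion $h^{-1}(a)\times\hat h^{-1}(a)\hookrightarrow W_T$ is Tor-independent, and derived base change gives $L\hat y_a^*Rq_{2*}(\cdot)\cong R(\mathrm{pr}_2)_*(\cdot)|^L_{h^{-1}(a)\times\hat h^{-1}(a)}$. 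Restriction commutes with $\otimes\mathcal P_T$, because $\mathcal P_T$ is a line bundle. The restriction of $Lq_1^*\mathcal K$ is $L\mathrm{pr}_1^*Ly_a^*\mathcal K$ by functoriality of derived pullback. Finally, $\mathcal P_T$ restricts to $\mathcal P_a$ by \eqref{eq:3.1}. Together these give $\Psi_a(Ly_a^*\mathcal K)$.

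The main subtlety is the Tor-independence in (i) for the pullback along $p_1$: $\iota_*\mathcal K$ is not flat over the base, so I must make sure the base change is taken along the flat map $p_1$ and not along $\iota$. This is legitimate because flatness of one of the two maps suffices for Tor-independence of a Cartesian square.
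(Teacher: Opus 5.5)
Your proposal is correct and follows the paper's proof essentially step for step. In (i) you use flat base change along the Cartesian square $W_T=p_1^{-1}(Y)$ via flatness of $p_1$, then the projection formula for $\kappa$ and $p_2\circ\kappa=\hat\iota\circ q_2$. In (ii) you use that $q_2$ is proper and flat as a base change of $h|_Y$, then derived base change to $h^{-1}(a)\times\hat h^{-1}(a)$ and the restriction of $\mathcal P_T$ to $\mathcal P_a$.

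The only blemish is your opening sentence about the square formed by $\hat\iota$ and $p_2$. It is never used in the argument, and its Tor-independence would follow from flatness of $p_2$ (a base change of $h$), not of $p_1$.
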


\begin{proof}
(i) Since $Y$ lies over $T$, a point $(x,\hat x)\in W$ with $x\in Y$ satisfies $\hat h(\hat x) = h(x)\in T$, so $\hat x\in\hat Y$. Hence $p_1^{-1}(Y) = Y\times_T\hat Y = W_T$, so the square formed by $\kappa$, $p_1$, $\iota$ and $q_1$ is Cartesian. The map $p_1$ is flat,
being a base change of the flat map $\hat h$. Hence flat base change gives
$Lp_1^*\,\iota_*\mathcal{K} \cong \kappa_*\,Lq_1^*\mathcal{K}$ (pushforward along
the closed immersions $\iota$ and $\kappa$ is exact). By the projection formula,
$\kappa_*Lq_1^*\mathcal{K} \otimes \mathcal{P} \cong \kappa_*\big(Lq_1^*\mathcal{K} \otimes \mathcal{P}_T\big)$.
Finally, $Rp_{2*}\kappa_* = R(p_2\circ\kappa)_* = R(\hat\iota\circ q_2)_* = \hat\iota_*\,Rq_{2*}$.
Combining these,
\[
  \Psi(\iota_*\mathcal{K}) = Rp_{2*}\big(Lp_1^*\iota_*\mathcal{K} \otimes \mathcal{P}\big)
  \cong \hat\iota_*\,Rq_{2*}\big(Lq_1^*\mathcal{K} \otimes \mathcal{P}_T\big) = \hat\iota_*\Psi_T(\mathcal{K}).
\]

(ii) The map $q_2$ is the base change of $h|_Y \colon Y \to T$ along $\hat h|_{\hat Y}$;
since $h|_Y$ is proper and flat, so is $q_2$. As $q_1$ is flat and $q_2$ is proper,
$\Psi_T$ takes values in $D^b(\mathrm{Coh}(\hat Y))$ by Grauert's direct image theorem.

Fix $a \in T$ and put $W_a := h^{-1}(a) \times \hat h^{-1}(a)$, with inclusion
$\kappa_a \colon W_a \hookrightarrow W_T$ and projections $r_1, r_2$ onto the two factors.
Then $W_a = q_2^{-1}\big(\hat h^{-1}(a)\big)$, $q_1 \circ \kappa_a = y_a \circ r_1$, and
$\kappa_a^*\mathcal{P}_T = \mathcal{P}_a$ by \eqref{eq:3.1}. Since $q_2$ is proper and flat,
base change gives
\[
  L\hat y_a^*\, Rq_{2*} \cong Rr_{2*}\, L\kappa_a^* .
\]
Hence
\[
  L\hat y_a^*\,\Psi_T(\mathcal{K})
  \cong Rr_{2*}\big( L\kappa_a^* Lq_1^*\mathcal{K} \otimes \kappa_a^*\mathcal{P}_T \big)
  \cong Rr_{2*}\big( Lr_1^*\, Ly_a^*\mathcal{K} \otimes \mathcal{P}_a \big)
  = \Psi_a\big(Ly_a^*\mathcal{K}\big).
\]
\end{proof}

\begin{proposition}\label{prop:support}
For every $a \in \mathcal{A}_{\mathrm{reg}}$,
\begin{equation}\label{eq:3.4}
  \operatorname{Supp}\Psi(\mathcal{E}_{\mathrm{global}}) \cap \hat h^{-1}(a)
  = \operatorname{Supp} L\hat{j}_a^*\Psi(\mathcal{E}_{\mathrm{global}}) =
  \begin{cases}
    \hat\varphi_a^{-1}\big(B_0^\perp\big), & a \in \mathcal{A}^{\mathrm{even}}_{\mathrm{reg}},\\[2pt]
    \emptyset, & a \in \mathcal{A}_{\mathrm{reg}} \setminus \mathcal{A}^{\mathrm{even}}_{\mathrm{reg}}.
  \end{cases}
\end{equation}
\end{proposition}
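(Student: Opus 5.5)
The first equality in \eqref{eq:3.4} is Lemma~\ref{lem:support} applied to the closed immersion $\hat j_a\colon \hat h^{-1}(a)\hookrightarrow \hat X$ of complex manifolds. It gives $\operatorname{Supp} L\hat j_a^*\Psi(\mathcal{E}_{\mathrm{global}}) = \operatorname{Supp}\Psi(\mathcal{E}_{\mathrm{global}})\cap \hat h^{-1}(a)$. It therefore remains to compute the derived restriction in the two cases. In both cases I will first rewrite $\Psi(\mathcal{E}_{\mathrm{global}})$ with Lemma~\ref{lem:evenbase}(i): since $\mathcal{E}_{\mathrm{global}}=\iota_*\mathcal{G}$,
\[
  \Psi(\mathcal{E}_{\mathrm{global}})\cong \hat\iota_*\Psi_T(\mathcal{G}).
\]
This avoids restricting the non-flat sheaf $\mathcal{E}_{\mathrm{global}}$ directly to fibres. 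With it, the support of $\Psi(\mathcal{E}_{\mathrm{global}})$ equals $\hat\iota(\operatorname{Supp}\Psi_T(\mathcal{G}))$, because pushforward along a closed immersion is exact and preserves supports.

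\emph{Non-even case.} Let $a\in\mathcal{A}_{\mathrm{reg}}\setminus T$. Then $\hat h^{-1}(a)$ is disjoint from $\hat Y=\hat h^{-1}(T)$. Since $\operatorname{Supp}\Psi(\mathcal{E}_{\mathrm{global}})\subset\hat Y$, the intersection is empty. Equivalently, $L\hat j_a^*\hat\iota_*(\cdot)=0$, because the stalks of $\hat\iota_*\Psi_T(\mathcal{G})$ vanish on the open complement of $\hat Y$. This gives the second case.

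\emph{Even case.} Let $a\in T$. Writing $\hat j_a=\hat\iota\circ\hat y_a$, we have
\[
  \operatorname{Supp}\Psi(\mathcal{E}_{\mathrm{global}})\cap\hat h^{-1}(a)=\operatorname{Supp}\Psi_T(\mathcal{G})\cap\hat h^{-1}(a)=\operatorname{Supp}L\hat y_a^*\Psi_T(\mathcal{G}).
\]
The second equality again uses Lemma~\ref{lem:support}, now for the closed immersion $\hat y_a$ into the manifold $\hat Y$. Then I chain:
\begin{enumerate}
  \item Lemma~\ref{lem:evenbase}(ii) gives $L\hat y_a^*\Psi_T(\mathcal{G})\cong\Psi_a(Ly_a^*\mathcal{G})$.
  \item Proposition~\ref{prop:revsheaf}(i) gives $Ly_a^*\mathcal{G}\cong y_a^*\mathcal{G}=\mathcal{H}$.
  \item Proposition~\ref{prop:base-change}(ii) gives $\Psi_a(\mathcal{H})\cong\hat\varphi_a^*\mathcal{F}(\varphi_{a*}\mathcal{H})$.
  \item Proposition~\ref{prop:revsheaf}(ii) gives $\varphi_{a*}\mathcal{H}\cong\mathcal{E}$.
\end{enumerate}
Pullback along the biholomorphism $\hat\varphi_a$ transports supports, by Lemma~\ref{lem:support} with $f=\hat\varphi_a$. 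Proposition~\ref{prop:fibrewise}(iv) then gives
\[
  \operatorname{Supp}\hat\varphi_a^*\mathcal{F}(\mathcal{E})=\hat\varphi_a^{-1}(B_0^\perp).
\]

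The main obstacle is bookkeeping rather than new mathematics. One point is to make sure the derived restriction is taken along $\hat y_a$ inside $\hat Y$ and not along $\hat j_a$ inside $\hat X$, since the latter would produce a derived self-intersection. This is why the argument passes through $\hat\iota_*$ first: $L\hat j_a^*\hat\iota_*$ differs from $L\hat y_a^*$ by a derived self-intersection, but only supports matter here, and both sides have the same support by Lemma~\ref{lem:support}. The other point is that the flatness in Proposition~\ref{prop:revsheaf}(i) is exactly what lets the underived fibre computation of Proposition~\ref{prop:revsheaf}(ii) feed into Proposition~\ref{prop:fibrewise}(iv).
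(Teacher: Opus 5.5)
Your proposal is correct and follows the paper's proof essentially step for step. You obtain the first equality from Lemma~\ref{lem:support}, rewrite $\Psi(\mathcal{E}_{\mathrm{global}})\cong\hat\iota_*\Psi_T(\mathcal{G})$ via Lemma~\ref{lem:evenbase}(i), and get emptiness off $T$ from $\operatorname{Supp}\Psi(\mathcal{E}_{\mathrm{global}})\subset\hat Y$. For $a\in T$ you use the same chain as the paper, through Lemma~\ref{lem:evenbase}(ii), Proposition~\ref{prop:revsheaf}, Proposition~\ref{prop:base-change}(ii) and Proposition~\ref{prop:fibrewise}(iv).

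One small point of emphasis: the derived self-intersection that the detour through $T$ is really designed to avoid lies on the source side, in $Lj_a^*\iota_*\mathcal{G}$, rather than on the $\hat X$ side. This does not affect your argument.
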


\begin{proof}
The first equality follows from Lemma~\ref{lem:support}, since
$\hat{j}_a \colon \hat h^{-1}(a) \hookrightarrow \hat X$ is a closed immersion of
complex manifolds and $\Psi(\mathcal{E}_{\mathrm{global}}) \in D^b(\mathrm{Coh}(\hat X))$
by Proposition~\ref{prop:base-change}(i).

By Lemma~\ref{lem:evenbase}(i),
$\Psi(\mathcal{E}_{\mathrm{global}}) = \Psi(\iota_*\mathcal{G}) \cong \hat\iota_*\Psi_T(\mathcal{G})$.
Since $\hat\iota_*$ is exact and preserves supports,
\[
  \operatorname{Supp}\Psi(\mathcal{E}_{\mathrm{global}}) =\operatorname{Supp}\Psi_T(\mathcal{G}) \subset \hat Y = \hat h^{-1}(T).
\]
If $a \in \mathcal{A}_{\mathrm{reg}} \setminus T$, then $\hat h^{-1}(a)$ is
disjoint from $\hat Y$, so the intersection in \eqref{eq:3.4} is empty.

Now let $a\in T$. Since $\Psi_T(\mathcal G)\in D^b(\mathrm{Coh}(\hat Y))$ by Lemma~\ref{lem:evenbase}(ii), applying Lemma~\ref{lem:support}  to the closed immersion $\hat y_a:\hat h^{-1}(a)\hookrightarrow\hat Y$, we get 
$$\mathrm{Supp}\,\Psi(\mathcal E_{\mathrm{global}})\cap\hat h^{-1}(a) = \mathrm{Supp}\,\Psi_T(\mathcal G)\cap\hat h^{-1}(a) = \mathrm{Supp}\,L\hat y_a^*\,\Psi_T(\mathcal G).$$

By Lemma~\ref{lem:evenbase}(ii) and Proposition~\ref{prop:revsheaf}(i),
\[
  L\hat y_a^*\,\Psi_T(\mathcal{G}) \cong \Psi_a\big(Ly_a^*\mathcal{G}\big) \cong \Psi_a(\mathcal{H}),
\]
and by Proposition~\ref{prop:base-change}(ii) and
Proposition~\ref{prop:revsheaf}(ii),
\[
  \Psi_a(\mathcal{H}) \cong \hat\varphi_a^*\,\mathcal{F}(\varphi_{a*}\mathcal{H}) \cong \hat\varphi_a^*\,\mathcal{F}(\mathcal{E}).
\]
Since $\hat\varphi_a$ is a biholomorphism, Proposition~\ref{prop:fibrewise}(iv)
gives
$\operatorname{Supp}\hat\varphi_a^*\,\mathcal{F}(\mathcal{E}) = \hat\varphi_a^{-1}\big(B_0^\perp\big)$.
This proves \eqref{eq:3.4}.
\end{proof}

% -------------------------------------------------------------------
\subsection{The \texorpdfstring{$PSp(2n,\mathbb{C})$}{PSp(2n,C)}-locus}\label{subsec:psp}

The moduli space of $PSp(2n,\mathbb{C})$-Higgs bundles has two connected
components, indexed by $\pi_1(PSp(2n,\mathbb{C})) = \mathbb{Z}/2 = \{0,1\}$. Extension
of structure group along $PSp(2n,\mathbb{C}) \subset PGL(2n,\mathbb{C})$ defines a map
\[
  e \colon \mathcal{M}^0_{\mathrm{reg}}(PSp(2n,\mathbb{C})) \longrightarrow \hat X, \qquad (E,\Phi) \longmapsto \big(E \times_{PSp(2n,\mathbb{C})} PGL(2n,\mathbb{C}),\ \Phi\big),
\]
where $\Phi\in H^0(E(\mathfrak{psp}(2n,\mathbb C))\otimes K)$ is regarded as a section of $E'(\mathfrak{pgl}(2n,\mathbb C))\otimes K$, $E' := E\times_{PSp(2n,\mathbb C)}PGL(2n,\mathbb C)$, via the $\mathrm{Ad}$-equivariant inclusion $\mathfrak{psp}(2n,\mathbb C) = \mathfrak{sp}(2n,\mathbb C)\subset\mathfrak{sl}(2n,\mathbb C) = \mathfrak{pgl}(2n,\mathbb C)$, which makes $E(\mathfrak{psp})$ a subbundle of $E'(\mathfrak{pgl})$. Here
$\mathcal{M}^0_{\mathrm{reg}}(PSp(2n,\mathbb{C}))$ denotes the points of the degree-$0$
component whose spectral curve is smooth. The map lands in the degree-$0$
component of $\mathcal{M}(PGL(2n,\mathbb{C}))$, since the induced map
$\mathbb{Z}/2 \to \mathbb{Z}/2n$ on fundamental groups sends $0$ to $0$ (it sends the
class of $-I$ to $n$).

\begin{proposition}\label{prop:psp}
The map $e$ is a closed embedding whose image lies over
$\mathcal{A}^{\mathrm{even}}_{\mathrm{reg}}$, and for every $a \in \mathcal{A}^{\mathrm{even}}_{\mathrm{reg}}$,
\[
  \hat\varphi_a\Big( e\big(\mathcal{M}^0_{\mathrm{reg}}(PSp(2n,\mathbb{C}))\big) \cap \hat h^{-1}(a) \Big) = B_0^\perp .
\]
\end{proposition}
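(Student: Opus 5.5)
The plan is to work spectrally, via the standard description of $Sp(2n,\mathbb{C})$-Higgs bundles with smooth spectral curve (Hitchin, \cite{Hi13}), and then pass to the projective quotient.

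\emph{Step 1: the image lies over the even base.} For $\Phi\in\mathfrak{sp}(2n,\mathbb{C})$ the characteristic polynomial is even, since $\Phi^t=-J\Phi J^{-1}$. Hence $\hat h\circ e$ takes values in $\mathcal{A}^{\mathrm{even}}$, and on the regular part in $\mathcal{A}^{\mathrm{even}}_{\mathrm{reg}}$.

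\emph{Step 2: fibrewise description.} Fix $a\in\mathcal{A}^{\mathrm{even}}_{\mathrm{reg}}$. Lift a degree-zero $PSp(2n,\mathbb{C})$-Higgs bundle to a $GSp$-Higgs bundle, i.e.\ a rank $2n$ bundle $V$ with a symplectic form $V\otimes V\to M$ compatible with $\Phi$. Under BNR the form corresponds to an isomorphism $\sigma^*L\cong L^{-1}\otimes\pi_a^*(M\otimes K^{2n-1})$. This uses relative duality $(\pi_{a*}L)^*\cong\pi_{a*}(L^{-1}\otimes\pi_a^*K^{2n-1})$ together with the fact that skew-symmetry of the form is compatible with $\sigma$. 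The condition can be checked at a ramification point, as in Hitchin's $Sp$ case.

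After twisting by $L_0(a)^{-1}$ and by a square root of $M$ (which exists by $2$-divisibility), the class $[L\otimes L_0(a)^{-1}]\in A^\vee$ has a representative $L'$ with $\sigma^*L'\cong L'^{-1}$. Such an $L'$ satisfies $\operatorname{Nm}_{S_a/C}(L')=\mathcal{O}_C$, since $p^*\operatorname{Nm}L'\cong L'\otimes\sigma^*L'$ and $p^*$ is injective. So $\hat\varphi_a$ of the image lies in the image of $\operatorname{Prym}(S_a/C)$, which is $B_0^\perp$ by Proposition~\ref{prop:fibrewise}(iii).

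Conversely, for $L'\in\operatorname{Prym}(S_a/C)$ we have $\sigma^*L'\cong L'^{-1}$. The bundle $L'\otimes L_0(a)$ then yields a symplectic Higgs bundle with $M=\mathcal{O}$, which gives the reverse inclusion. The degree-$0$ component condition should correspond precisely to $L'$ lying in the connected group $\operatorname{Prym}(S_a/C)$ rather than in the other coset of $\ker\operatorname{Nm}_{S_a/C}$. I would check this using the connectedness statement and the fact that $\pi_1$ of the image is detected by the parity of the norm-type invariant; this bookkeeping is where I expect trouble.

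\emph{Step 3: closed embedding.} For injectivity, note that two $PSp$-Higgs bundles with the same $PGL$ extension have isomorphic $PGL$-Higgs bundles. On a regular fibre the automorphism group is trivial (shown in \S\ref{subsec:notation}), and the symplectic structure on $\pi_*L$ compatible with $\Phi$ is unique up to scalar, because it corresponds to an isomorphism $\sigma^*L\to L^{-1}\otimes\cdots$ on a connected curve. Hence the lift is unique.

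For immersivity, the deformation complex of $PSp$ injects into that of $PGL$, since $\mathfrak{sp}$ is a direct summand of $\mathfrak{sl}$ as an $Sp$-module; the splitting is the $\pm$-eigenspaces of $X\mapsto -JX^tJ^{-1}$. The map on $H^1$ is therefore injective.

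For properness, $\mathcal{M}(PSp)\to\mathcal{A}_{Sp}$ is proper, and $\mathcal{A}_{Sp}\cong\mathcal{A}^{\mathrm{even}}$ is closed in $\mathcal{A}$. The regular part is the preimage of $\mathcal{A}^{\mathrm{even}}_{\mathrm{reg}}=\mathcal{A}^{\mathrm{even}}\cap\mathcal{A}_{\mathrm{reg}}$, which is closed in $\mathcal{A}_{\mathrm{reg}}$. Therefore $e$ is proper over $\mathcal{A}_{\mathrm{reg}}$, and a proper injective immersion is a closed embedding.

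The main obstacle is Step 2, specifically pinning down the degree-$0$ component and the quotient by $\pi_a^*\operatorname{Jac}(\Sigma)[2]$. Twisting by $2$-torsion $\pi_a^*M$ with $M^2=\mathcal{O}$ preserves $\sigma^*L'\cong L'^{-1}$ and is exactly the ambiguity of the $PSp$ lift. One must confirm that the torsor choices produce no extra components beyond $B_0^\perp$.
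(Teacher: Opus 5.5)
Your proposal is correct. Steps 1 and 2 follow the paper's argument: Hitchin's spectral description of symplectic Higgs bundles, the twist by $L_0(a)^{-1}$, injectivity of $p^*$ to land in $\ker\operatorname{Nm}_{S_a/C}$, and then Proposition~\ref{prop:fibrewise}(iii).

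The obstacle you flag goes away once the degree-$0$ hypothesis is put in the right place.
\begin{itemize}
\item There is no ``other coset''. The cover $p$ is ramified at $4n(g-1)>0$ points, so $\ker\operatorname{Nm}_{S_a/C}=\operatorname{Prym}(S_a/C)$ is connected, as used in Proposition~\ref{prop:fibrewise}(iii). Your forward argument already places every $L'$ with $\sigma^*L'\cong L'^{-1}$ in it.
\item Degree $0$ is actually used in your claim that $M$ has a square root ``by $2$-divisibility''. This needs $\deg M$ even, and $\deg M \bmod 2$ is exactly the topological type in $\pi_1(PSp(2n,\mathbb{C}))=\mathbb{Z}/2$, i.e.\ the obstruction to lifting from $GSp$ to $Sp$.
\item The paper avoids $GSp$ altogether. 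It uses that the degree-$0$ component is $\mathcal{M}(Sp(2n,\mathbb{C}))/\operatorname{Jac}(\Sigma)[2]$, so one may take $M=\mathcal{O}$ from the start.
\item The $\operatorname{Jac}(\Sigma)[2]$-ambiguity cannot create extra components. The map $\hat\varphi_a$ lands in $\operatorname{Pic}^0(S_a)/\pi_a^*\operatorname{Pic}^0(\Sigma)$, where twists by $\pi_a^*$ are invisible. The ambiguity bears only on injectivity.
\end{itemize}

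Your Step 3 takes a genuinely different route.

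\emph{The paper's route.} It gets injectivity by writing the fibre map explicitly as $\operatorname{Prym}(S_a/C)/\pi_a^*\operatorname{Jac}(\Sigma)[2]\to A^\vee$, $[M]\mapsto[M]$. It then computes $\operatorname{Prym}(S_a/C)\cap\pi_a^*\operatorname{Pic}^0(\Sigma)=\pi_a^*\operatorname{Jac}(\Sigma)[2]$ via injectivity of $\pi_C^*$. Immersivity comes from injectivity along fibres and along the base, which tacitly uses that the $PSp$-Hitchin map is a submersion on the regular locus. The advantage is that one computation gives both the image and injectivity.

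\emph{Your immersion argument.} You use the $\operatorname{Ad}(Sp(2n,\mathbb{C}))$-invariant splitting $\mathfrak{sl}(2n,\mathbb{C})=\mathfrak{sp}(2n,\mathbb{C})\oplus\mathfrak{m}$. Since $\Phi$ is $\mathfrak{sp}$-valued, this splits the deformation complex, so $de$ is injective on $\mathbb{H}^1$ at every point. This is cleaner and does not depend on the fibration.

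\emph{Your injectivity argument.} Uniqueness of the $PSp$-reduction is more conceptual, but ``unique up to scalar'' needs one more remark. A $PSp$-reduction of $\mathbb{P}(V)$ is a nondegenerate form $\Lambda^2V\to M$ with $M$ allowed to vary. Two such forms give $\pi_a^*(M_1\otimes M_2^{-1})\cong\mathcal{O}$ via $\sigma^*L\otimes L\cong\pi_a^*(M_i\otimes K^{2n-1})$. Injectivity of $\pi_a^*$ then forces $M_1\cong M_2$, and $H^0(S_a,\mathcal{O}^*)=\mathbb{C}^*$ gives uniqueness.
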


\begin{proof}
The Hitchin base of $PSp(2n,\mathbb{C})$ is
$\bigoplus_{i=1}^n H^0(\Sigma, K^{2i}) = \mathcal{A}^{\mathrm{even}}$, since the odd invariant
polynomials vanish on $\mathfrak{psp}(2n,\mathbb{C})=\mathfrak{sp}(2n,\mathbb{C})$, and the spectral curve of a point
$a$ is the same curve $S_a$. Hence $\hat h \circ e$ is the composition of the
$PSp$-Hitchin map with the inclusion $\mathcal{A}^{\mathrm{even}} \subset \mathcal{A}$, and the
image of $e$ lies over $\mathcal{A}^{\mathrm{even}}_{\mathrm{reg}}$.

Fix $a \in \mathcal{A}^{\mathrm{even}}_{\mathrm{reg}}$. An $Sp(2n,\mathbb{C})$-Higgs bundle with spectral
curve $S_a$ corresponds to a line bundle $L$ on $S_a$ with
$L \otimes \sigma^*L \cong \pi_a^*K^{2n-1}$ \cite{Hi07}. Writing
$M = L \otimes L_0(a)^{-1}$ and using $\sigma^*L_0(a) \cong L_0(a)$, this becomes
$M \otimes \sigma^*M \cong \mathcal{O}_{S_a}$, i.e. $p^*\operatorname{Nm}_{S_a/C}(M) \cong \mathcal{O}$,
i.e. $M \in \operatorname{Prym}(S_a/C)$, by injectivity of $p^*$. The degree-$0$
component of the $PSp(2n,\mathbb{C})$-fibre is the quotient of the
$Sp(2n,\mathbb{C})$-fibre by the action of $\operatorname{Jac}(\Sigma)[2] = H^1(\Sigma,\mathbb{Z}/2)$,
$E \mapsto E \otimes M'$, which on spectral data is $L \mapsto L \otimes \pi_a^*M'$.
Hence the $PSp(2n,\mathbb{C})$-fibre is identified with
$\operatorname{Prym}(S_a/C)/\pi_a^*\operatorname{Jac}(\Sigma)[2]$, and, since $\hat\varphi_a$ is
also given by $[L] \mapsto [L \otimes L_0(a)^{-1}]$, the map $e$ on this fibre is
\[
  \operatorname{Prym}(S_a/C)/\pi_a^*\operatorname{Jac}(\Sigma)[2] \longrightarrow A^\vee = \operatorname{Pic}^0(S_a)/\pi_a^*\operatorname{Pic}^0(\Sigma), \qquad [M] \longmapsto [M].
\]
Its image is the image of $\operatorname{Prym}(S_a/C)$ in $A^\vee$, which is $B_0^\perp$
by Proposition~\ref{prop:fibrewise}(iii). It is injective: if
$\pi_a^*M' \in \operatorname{Prym}(S_a/C)$ with $M' \in \operatorname{Pic}^0(\Sigma)$, then
$\pi_C^*(M'^{\otimes 2}) = \operatorname{Nm}_{S_a/C}(\pi_a^*M') = \mathcal{O}_C$, so
$M'^{\otimes 2} \cong \mathcal{O}_\Sigma$ by injectivity of $\pi_C^*$ (proof of
Proposition~\ref{prop:fibrewise}(i)); thus
$\operatorname{Prym}(S_a/C) \cap \pi_a^*\operatorname{Pic}^0(\Sigma) = \pi_a^*\operatorname{Jac}(\Sigma)[2]$.

Consequently $e$ is injective, since it is injective on fibres and compatible
with the injective map of bases. It is a holomorphic immersion, its
differential being injective along the fibres (inclusion of an abelian
subvariety) and along the base (the linear inclusion
$\mathcal{A}^{\mathrm{even}} \subset \mathcal{A}$). Finally, $e$ is proper, since both
$\mathcal{M}^0_{\mathrm{reg}}(PSp(2n,\mathbb{C}))$ and $\hat h^{-1}(\mathcal{A}^{\mathrm{even}}_{\mathrm{reg}}) \cap \hat X$ are
proper over $\mathcal{A}^{\mathrm{even}}_{\mathrm{reg}}$. An injective proper immersion is a closed
embedding.
\end{proof}

% -------------------------------------------------------------------
\subsection{Proof of Theorem~\ref{thm:main}}\label{subsec:proof}

By Proposition~\ref{prop:support}, the support of $\Psi(\mathcal{E}_{\mathrm{global}})$
 lies entirely inside the even regular
Hitchin fibres, and its intersection with every even fibre $\hat h^{-1}(a)$ is
$\hat\varphi_a^{-1}(B_0^\perp)$. By Proposition~\ref{prop:psp}, the subset $e\big(\mathcal{M}^0_{\mathrm{reg}}(PSp(2n,\mathbb{C}))\big)$ has exactly the same
intersection with every fibre. Since both sets are unions of their
intersections with the fibres of $\hat h$, we conclude that
\[
  \operatorname{Supp}\Psi(\mathcal{E}_{\mathrm{global}}) = e\big(\mathcal{M}^0_{\mathrm{reg}}(PSp(2n,\mathbb{C}))\big).
\]

It remains to show that this support is of type $(B,B,B)$. Note that
$\mathcal{M}^0_{\mathrm{reg}}(PSp(2n,\mathbb{C}))$ is itself a hyper-K\"ahler manifold, since it
is an open subset of the smooth locus of $\mathcal{M}(PSp(2n,\mathbb{C}))$, which carries
the hyper-K\"ahler structure of \S\ref{revbrane}.  Denote its complex structures by
$(J_1', J_2', J_3')$. Similarly, $\hat X$ is a hyper-K\"ahler manifold; denote
its complex structures by $(J_1, J_2, J_3)$. The map $e$ is holomorphic with
respect to the three pairs $(J_i', J_i)$, $i = 1,2,3$:
\begin{itemize}
  \item for $i = 1$, extension of structure group takes holomorphic families of Higgs bundles to holomorphic families, so $e$ is holomorphic with respect to $J_1$;
  \item for $i = 2$, under the non-abelian Hodge correspondence $e$ corresponds
    to the map of character varieties
    $\mathcal{X}_{PSp(2n,\mathbb{C})}(\Gamma) \to \mathcal{X}_{PGL(2n,\mathbb{C})}(\Gamma)$ induced by
    $PSp(2n,\mathbb{C}) \subset PGL(2n,\mathbb{C})$, since a harmonic reduction to the
    maximal compact subgroup of $PSp(2n,\mathbb{C})$ extends to one for
    $PGL(2n,\mathbb{C})$; this map is algebraic, hence $J_2$-holomorphic;
  \item for $i = 3$, $de \circ J_3' = de \circ J_1'J_2' = J_1 \circ de \circ J_2' = J_1 J_2 \circ de = J_3 \circ de$.
\end{itemize}
Since $e$ is a closed embedding which is holomorphic with respect to each pair
$(J_i', J_i)$, its image
$e\big(\mathcal{M}^0_{\mathrm{reg}}(PSp(2n,\mathbb{C}))\big)$ is a complex submanifold of $\hat X$,
and hence of the smooth locus of $\mathcal{M}(PGL(2n,\mathbb{C}))$, with respect to each of
$J_1, J_2, J_3$. Hence $\operatorname{Supp}\Psi(\mathcal{E}_{\mathrm{global}})$ is of type
$(B,B,B)$. \qed

\begin{remark}
In the framework of homological mirror symmetry, a $B$-brane is an object of
the bounded derived category of coherent sheaves, and its support encodes
where the brane lives. Throughout the proof we regard the coherent sheaf
$\mathcal{E}_{\mathrm{global}}$ as an object of $D^b(\mathrm{Coh}(\mathcal{M}_{\mathrm{reg}}(SL(2n,\mathbb{C}))))$;
its support is precisely $\mathcal{M}^{\mathrm{rev}}_{\mathrm{reg}}(SL(2n,\mathbb{C}))$. Applying the
relative Fourier--Mukai transform $\Psi$, we obtain
$\Psi(\mathcal{E}_{\mathrm{global}}) \in D^b(\mathrm{Coh}(\mathcal{M}^0_{\mathrm{reg}}(PGL(2n,\mathbb{C}))))$,
whose support, biholomorphic to $\mathcal M^0_{\mathrm{reg}}(PSp(2n,\mathbb C))$, is of type $(B,B,B)$; in this sense it is the mirror of the $(B,A,A)$-brane $\mathcal M^{\mathrm{rev}}_{\mathrm{reg}}(SL(2n,\mathbb C))$. This support does not depend on the choice of the theta characteristic used to define $\varphi$. Indeed, another choice replaces $\varphi$ by its composition with a translation of $\boldsymbol{\mathcal A}$. This replaces $\mathcal P$ by $\mathcal P\otimes p_2^*\mathcal N$ for some line bundle $\mathcal N$ on $\hat X$, and hence, by the projection formula, $\Psi(\mathcal E_{\mathrm{global}})$ by $\Psi(\mathcal E_{\mathrm{global}})\otimes\mathcal N$, which has the same support.
\end{remark}

\subsection*{Acknowledgements} 
The authors thank Indranil Biswas, Sibasish Banerjee, and Pradip Kumar for discussions on topics related to this work. We are also grateful to Szilárd Szabó and Oscar García-Prada for their comments on an earlier draft and for drawing our attention to \cite{Si09,hh22} and \cite{ga,gr}, respectively.

Part of this work was carried out during Gongopadhyay’s visit to the Institut des Hautes Études Scientifiques and Nayak’s visits to ICTS Bengaluru (ICTS/GSS2026/02) and the Lodha Mathematical Sciences Institute, Mumbai. They thank these institutes for their hospitality and support.

\section*{Declarations} 

{\bf Ethical Approval}. Not applicable.

{\bf Competing Interest}. Not applicable.

{\bf Funding}. Gongopadhyay acknowledges support from ANRF project: \\ ANRF/ARGM/2025/000122/MTR.

{\bf Authors' Contributions.} All authors have contributed equally. 

{\bf Availability of data and materials}. Not applicable.

\medskip {\bf AI declaration.}
During the preparation of this manuscript, the authors used LLM tools for language editing, stylistic improvement, and proofreading, including assistance in identifying possible inconsistencies in the exposition. The mathematical results and proofs were developed and independently verified by the authors. The authors reviewed the manuscript and assume full responsibility for its content.

\end{document}